\documentclass[reqno]{amsart} 
\usepackage{amsmath,amsthm,amssymb,amsfonts}
\usepackage{mathrsfs}
\usepackage{longtable}
\numberwithin{equation}{section}
\usepackage{color}
\usepackage{enumerate}
\usepackage{multirow} 
\newtheoremstyle{mystyle}
{}
{}
{\normalfont}
{ }
{\bfseries}
{}
{10pt}
{ }
\theoremstyle{mystyle}
\newtheorem{theorem}{Theorem}
\newtheorem{proposition}{Proposition}
\newtheorem{lemma}{Lemma}
\newtheorem{remark}{Remark}

\usepackage{mathtools}
\usepackage[pdftex]{graphicx}
\usepackage{color} 

\def\Diag{\mathop{\rm Diag}\nolimits}
\def\tr{\mathop{\rm tr}\nolimits}
\def\vec{\mathop{\rm vec}\nolimits}
\def\vech{\mathop{\rm vech}\nolimits}
\def\det{\mathop{\rm det}\nolimits}
\def\rank{\mathop{\rm rank}\nolimits}

\allowdisplaybreaks[4]
\usepackage[foot]{amsaddr}
\usepackage[top=3cm, bottom=3cm, left=2.5cm, right=2.5cm]{geometry}
\usepackage[hang,small,bf]{caption}
\usepackage[subrefformat=parens]{subcaption}
\usepackage{comment}
\large
\title[QBIC of SEM for jump-diffusion processes]{QBIC of SEM for jump-diffusion processes based on high-frequency data}
\author[S. Kusano]{Shogo Kusano $^{1}$}
\author[M. Uchida]{Masayuki Uchida $^{2,3,4}$}
\address{$^{1}$ Faculty of Advanced Science and Technology, Kumamoto University, Kumamoto, Japan}
\address{$^{2}$ Graduate School of Engineering Science, The University of Osaka, Toyonaka, Japan}
\address{$^{3}$ Center for Mathematical Modeling and Data Science (MMDS), The University of Osaka, Toyonaka, Japan}
\address{$^{4}$ Graduate School of Mathematical Science, The University of Tokyo, Meguro, Japan and CREST, Japan Science and Technology Agency, Tokyo, Japan}
\begin{document}
\begin{abstract} 
\fontsize{9pt}{11pt}\selectfont
Structural equation modeling (SEM) is a statistical method for analyzing relationships among latent variables. Since SEM is a confirmatory method, the model needs to be specified in advance. In practice, however, statisticians have several candidate models and aim to select the most appropriate one among them. In this paper, we consider model selection in SEM for jump-diffusion processes. We propose a quasi-Bayesian information criterion (QBIC) for the SEM and show that the proposed criterion has model-selection consistency.
\end{abstract}
\keywords{Structural equation modeling; jump-diffusion processes; 
Bayesian information criterion; High-frequency data.}
\maketitle

\section{Introduction} 
\fontsize{11pt}{18pt}\selectfont
\raggedbottom
\setlength{\abovedisplayskip}{8pt}
\setlength{\belowdisplayskip}{8pt}
We study model selection in structural equation modeling (SEM) for jump-diffusion processes. Let $(\Omega, \mathcal{F},(\mathcal{F}_t)_{t\geq 0}, {\bf{P}})$ be a filtered probability space. First of all, we introduce the true model. The $p_1$-dimensional observable process $\{X_{1,0,t}\}_{t\geq 0}$ is given by the following true factor model:
\begin{align*}
    X_{1,0,t}&={\bf{\Lambda}}_{1,0}\xi_{0,t}+\delta_{0,t},
\end{align*}
where $\{\xi_{0,t}\}_{t\geq 0}$ and $\{\delta_{0,t}\}_{t\geq 0}$ are $k_1$ and $p_1$-dimensional c{\`a}dl{\`a}g $(\mathcal{F}_t)$-adapted latent processes, respectively, $k_1\leq p_1$ and ${\bf{\Lambda}}_{1,0}\in\mathbb{R}^{p_1\times k_1}$ is a true constant loading matrix. The stochastic processes $\{\xi_{0,t}\}_{t\geq 0}$ and $\{\delta_{0,t}\}_{t\geq 0}$ are 
governed by the stochastic differential equations:
\begin{align*}
    d\xi_{0,t}&=a_{1}(\xi_{0,t-})dt+{\bf{S}}_{1,0}d W_{1,t}+\int_{E_1}c_1(\xi_{0,t-},z_1)p_1(dt,dz_1),\quad \xi_{0,0}=x_{1,0}
\end{align*}
and
\begin{align*}
    d\delta_{0,t}&=a_{2}(\delta_{0,t-})dt+{\bf{S}}_{2,0}d W_{2,t}+\int_{E_2}c_2(\delta_{0,t-},z_2)p_2(dt,dz_2),\quad \delta_{0,0}=x_{2,0},
\end{align*}
where $E_1=\mathbb{R}^{k_1} \backslash \{0\}$, $E_2=\mathbb{R}^{p_1} \backslash \{0\}$, 
$a_1:\mathbb{R}^{k_1}\longrightarrow\mathbb{R}^{k_1}$, $a_2:\mathbb{R}^{p_1}\longrightarrow\mathbb{R}^{p_1}$, 
$c_1:\mathbb{R}^{k_1}\times E_1\longrightarrow\mathbb{R}^{k_1}$ and
$c_2:\mathbb{R}^{p_1}\times E_2\longrightarrow\mathbb{R}^{p_1}$ are unknown Borel functions, ${\bf{S}}_{1,0}\in\mathbb{R}^{k_1\times r_1}$ and ${\bf{S}}_{2,0}\in\mathbb{R}^{p_1\times r_2}$ are true constant diffusion coefficient matrices, 
the initial values $x_{1,0}\in\mathbb{R}^{k_1}$ and $x_{2,0}\in\mathbb{R}^{p_1}$ are non-random vectors, $\{W_{1,t}\}_{t\geq 0}$ and $\{W_{2,t}\}_{t\geq 0}$ are $r_1$ and $r_2$-dimensional standard $(\mathcal{F}_t)$-Wiener processes, and $p_1(dt,dz_1)$ and $p_2(dt,dz_2)$ are Poisson random measures on $\mathbb{R}_{+}\times E_1$ and $\mathbb{R}_{+}\times E_2$
with compensators $q_1(dt,dz_1)={\bf{E}}\bigl[p_1(dt,dz_1)\bigr]$ and $q_2(dt,dz_2)={\bf{E}}\bigl[p_2(dt,dz_2)\bigr]$. Here, ${\bf E}$ denotes the expectation operator. The $p_2$-dimensional observable process $\{X_{2,0,t}\}_{t\geq 0}$ is defined by the following true factor model:
\begin{align*}
    X_{2,0,t}&={\bf{\Lambda}}_{2,0}\eta_{0,t}+\varepsilon_{0,t},
\end{align*}
where $\{\eta_{0,t}\}_{t\geq 0}$ and $\{\varepsilon_{0,t}\}_{t\geq 0}$ are $k_2$ and $p_2$-dimensional c{\`a}dl{\`a}g $(\mathcal{F}_t)$-adapted latent processes, respectively, $k_2\leq p_2$ and ${\bf{\Lambda}}_{2,0}\in\mathbb{R}^{p_2\times k_2}$ is a true constant loading matrix. The stochastic process $\{\varepsilon_{0,t}\}_{t\geq 0}$ is 
governed by
the stochastic differential equation:
\begin{align*}
    d\varepsilon_{0,t}&=a_{3}(\varepsilon_{0,t-})dt+{\bf{S}}_{3,0}d W_{3,t}+\int_{E_3}c_3(\varepsilon_{0,t-},z_3)p_3(dt,dz_3),\quad \varepsilon_{0,0}=x_{3,0},
\end{align*}
where $E_3=\mathbb{R}^{p_2} \backslash \{0\}$, $a_3:\mathbb{R}^{p_2}\longrightarrow\mathbb{R}^{p_2}$ and $c_3:\mathbb{R}^{p_2}\times E_3\longrightarrow\mathbb{R}^{p_2}$ are unknown Borel functions, ${\bf{S}}_{3,0}\in\mathbb{R}^{p_2\times r_3}$ is a true constant diffusion coefficient matrix, the initial value $x_{3,0}\in\mathbb{R}^{p_2}$ is a non-random vector, $\{W_{3,t}\}_{t\geq 0}$ is an $r_3$-dimensional standard $(\mathcal{F}_t)$-Wiener process, and $p_3(dt,dz_3)$ is a Poisson random measure on $\mathbb{R}_{+}\times E_3$ with a 
compensator $q_3(dt,dz_3)={\bf{E}}\bigl[p_3(dt,dz_3)\bigr]$. In addition, we assume that the relationship between $\{\xi_{0,t}\}_{t\geq 0}$ and $\{\eta_{0,t}\}_{t\geq 0}$ is given by
\begin{align*}
    \eta_{0,t}={\bf{B}}_0\eta_{0,t}+{\bf{\Gamma}}_0\xi_{0,t}+\zeta_{0,t},
\end{align*}
where $\{\zeta_{0,t}\}_{t\geq 0}$ is a $k_2$-dimensional c{\`a}dl{\`a}g $(\mathcal{F}_t)$-adapted latent process, ${\bf{B}}_0\in\mathbb{R}^{k_2\times k_2}$ is a true constant loading matrix whose diagonal elements are zero, and ${\bf{\Gamma}}_0\in\mathbb{R}^{k_2\times k_1}$ is a true constant loading matrix.
The stochastic process $\{\zeta_{0,t}\}_{t\geq 0}$ is assumed to satisfy the following stochastic differential equation:
\begin{align*}
    d\zeta_{0,t}&=a_{4}(\zeta_{0,t-})dt+{\bf{S}}_{4,0}d W_{4,t}+\int_{E_4}c_4(\zeta_{0,t-},z_4)p_4(dt,dz_4),\quad \zeta_{0,0}=x_{4,0}, 
\end{align*}
where $E_4=\mathbb{R}^{k_2} \backslash \{0\}$, $a_4:\mathbb{R}^{k_2}\longrightarrow\mathbb{R}^{k_2}$ and $c_4:\mathbb{R}^{k_2}\times E_4\longrightarrow\mathbb{R}^{k_2}$ are unknown Borel functions, ${\bf{S}}_{4,0}\in\mathbb{R}^{k_2\times r_4}$ is a true constant diffusion coefficient matrix, the initial value $x_{4,0}\in\mathbb{R}^{k_2}$ is a non-random vector, $\{W_{4,t}\}_{t\geq 0}$ is an $r_4$-dimensional standard $(\mathcal{F}_t)$-Wiener process, and $p_4(dt,dz_4)$ is a Poisson random measure on $\mathbb{R}_{+}\times E_4$ with a compensator $q_4(dt,dz_4)={\bf{E}}\bigl[p_4(dt,dz_4)\bigr]$. 
We define the volatility matrices of $\{\xi_{0,t}\}_{t\geq 0}$, $\{\delta_{0,t}\}_{t\geq 0}$, $\{\varepsilon_{0,t}\}_{t\geq 0}$ and $\{\zeta_{0,t}\}_{t\geq 0}$ as ${\bf{\Sigma}}_{\xi\xi,0}={\bf{S}}_{1,0}{\bf{S}}_{1,0}^{\top}$, 
${\bf{\Sigma}}_{\delta\delta,0}={\bf{S}}_{2,0}{\bf{S}}_{2,0}^{\top}$, ${\bf{\Sigma}}_{\varepsilon\varepsilon,0}={\bf{S}}_{3,0}{\bf{S}}_{3,0}^{\top}$ and 
${\bf{\Sigma}}_{\zeta\zeta,0}={\bf{S}}_{4,0}{\bf{S}}_{4,0}^{\top}$, respectively, where $\top$ denotes matrix transpose. We assume that ${\bf{\Lambda}}_{1,0}$ and ${\bf{\Lambda}}_{2,0}$ are of full column rank, and that ${\bf{\Psi}}_0=\mathbb{I}_{k_2}-{\bf{B}}_0$ is non-singular, where $\mathbb{I}_{k_2}$ is the identity matrix of size $k_2$. It is supposed that for any $t\geq 0$,
\begin{align*}
    \mathcal{F}_t,\quad \sigma(W_{i,u}-W_{i,t};\ u\geq t) \ \  (i=1,2,3,4)
\end{align*}
and
\begin{align*}
    \sigma\bigl(p_i(A_i\cap ((t,\infty)\times E_i)); A_i\subset\mathbb{R}_{+}\times E_i \mbox{\ is a Borel set}\bigr)\quad (i=1,2,3,4)
\end{align*}
are independent. For each $i=1,2,3,4$, we assume that the compensator $q_i(dt,dz_i)$ admits the representation $q_i(dt,dz_i)=f_i(z_i)dz_idt$ and $f_i(z_i)=\lambda_{i,0}F_i(z_i)$, where $\lambda_{i,0}>0$ is an unknown value, and $F_i(z_i)$ is an unknown probability density. Set $p=p_1+p_2$ and $X_{0,t}=(X_{1,0,t}^{\top},X_{2,0,t}^{\top})^{\top}$. To simplify notation, 
We write
$X_{0,t}$ as $X_t$. $\{X_{t_i^n}\}_{i=0}^n$ are discrete observations, where $t_i^n=ih_n$ and $T=nh_n$ is fixed. Let
\begin{align*}
    {\bf{\Sigma}}_0=\begin{pmatrix}
    {\bf{\Sigma}}_0^{11} & {\bf{\Sigma}}_0^{12}\\
    {\bf{\Sigma}}_0^{12\top} & {\bf{\Sigma}}_0^{22}
    \end{pmatrix},
\end{align*}
where
\begin{align*}
    {\bf{\Sigma}}_0^{11}&={\bf{\Lambda}}_{1,0}{\bf{\Sigma}}_{\xi\xi,0}{\bf{\Lambda}}_{1,0}^{\top}+{\bf{\Sigma}}_{\delta\delta,0},\\
    {\bf{\Sigma}}_0^{12}&={\bf{\Lambda}}_{1,0}{\bf{\Sigma}}_{\xi\xi,0}{\bf{\Gamma}}_0^{\top}{\bf{\Psi}}_0^{-1\top}{\bf{\Lambda}}_{2,0}^{\top},\\
    {\bf{\Sigma}}_0^{22}&={\bf{\Lambda}}_{2,0}{\bf{\Psi}}_0^{-1}({\bf{\Gamma}}_0{\bf{\Sigma}}_{\xi\xi,0}{\bf{\Gamma}}_0^{\top}+{\bf{\Sigma}}_{\zeta\zeta,0}){\bf{\Psi}}_0^{-1\top}{\bf{\Lambda}}_{2,0}^{\top}+{\bf{\Sigma}}_{\varepsilon\varepsilon,0}.
\end{align*}
We assume that ${\bf{\Sigma}}_0$ is positive definite.  Next, we introduce parametric models. For Model $m\in\{1,\ldots,M\}$, let $\theta_m\in\Theta_m\subset\mathbb{R}^{q_m}$ denote the parameter vector, where $\Theta_m$ is a bounded open convex subset of $\mathbb{R}^{q_m}$. Here, $\theta_m$ contains only the unknown components of the loading matrices and the volatility matrices of the latent processes in Model $m$. 
Some examples of such parameters are given in Section \ref{simulation}. The $p_1$-dimensional observable process $\{X^{\theta}_{1,m,t}\}_{t\geq 0}$ is defined by the following factor model:
\begin{align*}
    X^{\theta}_{1,m,t}={\bf{\Lambda}}^{\theta}_{1,m}\xi^{\theta}_{m,t}+\delta^{\theta}_{m,t}, 
\end{align*}
where $\{\xi^{\theta}_{m,t}\}_{t\geq 0}$ and $\{\delta^{\theta}_{m,t}\}_{t\geq 0}$
are $k_1$ and $p_1$-dimensional c{\`a}dl{\`a}g $(\mathcal{F}_t)$-adapted latent processes, respectively, and ${\bf{\Lambda}}^{\theta}_{1,m}\in\mathbb{R}^{p_1\times k_1}$ is a constant loading matrix. The stochastic processes $\{\xi^{\theta}_{m,t}\}_{t\geq 0}$ and $\{\delta^{\theta}_{m,t}\}_{t\geq 0}$ are supposed to satisfy the stochastic differential equations:
\begin{align*}
    d\xi_{m,t}^{\theta}&=a_{1}(\xi^{\theta}_{m,t-})dt+{\bf{S}}^{\theta}_{1,m}d W_{1,t}+\int_{E_1}c_1(\xi^{\theta}_{m,t-},z_1)p_1(dt,dz_1),\quad \xi^{\theta}_{m,0}=x_{1,0}
\end{align*}
and
\begin{align*}
    d\delta_{m,t}^{\theta}&=a_{2}(\delta^{\theta}_{m,t-})dt+{\bf{S}}^{\theta}_{2,m}d W_{2,t}+\int_{E_2}c_2(\delta^{\theta}_{m,t-},z_2)p_2(dt,dz_2),\quad \delta^{\theta}_{m,0}=x_{2,0},
\end{align*}
where ${\bf{S}}^{\theta}_{1,m}\in\mathbb{R}^{k_1\times r_1}$ and ${\bf{S}}^{\theta}_{2,m}\in\mathbb{R}^{p_1\times r_2}$ are constant diffusion coefficient matrices. The $p_2$-dimensional observable process $\{X^{\theta}_{2,m,t}\}_{t\geq 0}$ is given by the following factor model:
\begin{align*}
    X^{\theta}_{2,m,t}&={\bf{\Lambda}}^{\theta}_{2,m}\eta^{\theta}_{m,t}+\varepsilon^{\theta}_{m,t},
\end{align*}
where $\{\eta^{\theta}_{m,t}\}_{t\geq 0}$ and $\{\varepsilon^{\theta}_{m,t}\}_{t\geq 0}$ are $k_2$ and $p_2$-dimensional c{\`a}dl{\`a}g $(\mathcal{F}_t)$-adapted latent processes, respectively, and ${\bf{\Lambda}}^{\theta}_{2,m}\in\mathbb{R}^{p_2\times k_2}$ is a constant loading matrix. The stochastic process $\{\varepsilon^{\theta}_{m,t}\}_{t\geq 0}$ is 
governed by the stochastic differential equation:
\begin{align*}
    d\varepsilon_{m,t}^{\theta}&=a_{3}(\varepsilon^{\theta}_{m,t-})dt+{\bf{S}}^{\theta}_{3,m}d W_{3,t}+\int_{E_3}c_3(\varepsilon^{\theta}_{m,t-},z_3)p_3(dt,dz_3),\quad \varepsilon^{\theta}_{m,0}=x_{3,0},
\end{align*}
where ${\bf{S}}^{\theta}_{3,m}\in\mathbb{R}^{p_2\times r_3}$ is a constant diffusion coefficient matrix. Furthermore, we assume that the relationship between $\{\xi^{\theta}_{m,t}\}_{t\geq 0}$ and $\{\eta^{\theta}_{m,t}\}_{t\geq 0}$ is defined by
\begin{align*}
    \eta^{\theta}_{m,t}={\bf{B}}_m^{\theta}\eta^{\theta}_{m,t}+{\bf{\Gamma}}_m^{\theta}\xi^{\theta}_{m,t}+\zeta^{\theta}_{m,t},
\end{align*}
where $\{\zeta^{\theta}_{m,t}\}_{t\geq 0}$ is a $k_2$-dimensional c{\`a}dl{\`a}g $(\mathcal{F}_t)$-adapted latent process, ${\bf{B}}_m^{\theta}\in\mathbb{R}^{k_2\times k_2}$ is a constant loading matrix whose diagonal elements are zero, and ${\bf{\Gamma}}_m^{\theta}\in\mathbb{R}^{k_2\times k_1}$ is a constant loading matrix.
The stochastic process $\{\zeta^{\theta}_{m,t}\}_{t\geq 0}$ is assumed to satisfy the following stochastic differential equation:
\begin{align*}
    d\zeta_{m,t}^{\theta}&=a_{4}(\zeta^{\theta}_{m,t-})dt+{\bf{S}}^{\theta}_{4,m}d W_{4,t}+\int_{E_4}c_4(\zeta^{\theta}_{m,t-},z_4)p_4(dt,dz_4),\quad \zeta^{\theta}_{m,0}=x_{4,0},
\end{align*}
where ${\bf{S}}^{\theta}_{4,m}\in\mathbb{R}^{k_2\times r_4}$ is a constant diffusion coefficient matrix. It is supposed that ${\bf{\Lambda}}^{\theta}_{1,m}$ and ${\bf{\Lambda}}^{\theta}_{2,m}$ are of full column rank, and
${\bf{\Psi}}^{\theta}_m=\mathbb{I}_{k_2}-{\bf{B}}_m^{\theta}$ is non-singular. The volatility matrices of $\{\xi^{\theta}_{m,t}\}_{t\geq 0}$, $\{\delta^{\theta}_{m,t}\}_{t\geq 0}$, $\{\varepsilon^{\theta}_{m,t}\}_{t\geq 0}$ and $\{\zeta^{\theta}_{m,t}\}_{t\geq 0}$ are defined by ${\bf{\Sigma}}^{\theta}_{\xi\xi,m}={\bf{S}}^{\theta}_{1,m}{\bf{S}}_{1,m}^{\theta\top}$, 
${\bf{\Sigma}}^{\theta}_{\delta\delta,m}={\bf{S}}^{\theta}_{2,m}{\bf{S}}_{2,m}^{\theta\top}$, ${\bf{\Sigma}}^{\theta}_{\varepsilon\varepsilon,m}={\bf{S}}^{\theta}_{3,m}{\bf{S}}_{3,m}^{\theta\top}$ and 
${\bf{\Sigma}}^{\theta}_{\zeta\zeta,m}={\bf{S}}^{\theta}_{4,m}{\bf{S}}_{4,m}^{\theta\top}$, respectively. Set $X_{m,t}^{\theta}=(X_{1,m,t}^{\theta\top},X_{2,m,t}^{\theta\top})^{\top}$ and
\begin{align*}
    {\bf{\Sigma}}_m(\theta_m)=\begin{pmatrix}
    {\bf{\Sigma}}_m^{11}(\theta_m) & {\bf{\Sigma}}_m^{12}(\theta_m)\\
    {\bf{\Sigma}}_m^{12}(\theta_m)^{\top} & {\bf{\Sigma}}_m^{22}(\theta_m)
    \end{pmatrix},
\end{align*}
where 
\begin{align*}                      
    \qquad\qquad{\bf{\Sigma}}^{11}_m(\theta_m)&
    ={\bf{\Lambda}}^{\theta}_{1,m}{\bf{\Sigma}}^{\theta}_{\xi\xi,m}{\bf{\Lambda}}_{1,m}^{\theta\top}
    +{\bf{\Sigma}}^{\theta}_{\delta\delta,m},\\  {\bf{\Sigma}}^{12}_m(\theta_m)&={\bf{\Lambda}}^{\theta}_{1,m}{\bf{\Sigma}}^{\theta}_{\xi\xi,m}{\bf{\Gamma}}_m^{\theta\top}{\bf{\Psi}}_m^{\theta-1\top}{\bf{\Lambda}}_{2,m}^{\theta\top},\\
    {\bf{\Sigma}}^{22}_m(\theta_m)&={\bf{\Lambda}}^{\theta}_{2,m}{\bf{\Psi}}_m^{\theta-1}({\bf{\Gamma}}_m^{\theta}{\bf{\Sigma}}^{\theta}_{\xi\xi,m}{\bf{\Gamma}}_m^{\theta\top}+{\bf{\Sigma}}^{\theta}_{\zeta\zeta,m}){\bf{\Psi}}_m^{\theta-1\top}{\bf{\Lambda}}_{2,m}^{\theta\top}+{\bf{\Sigma}}^{\theta}_{\varepsilon\varepsilon,m}.
\end{align*}
We assume that there exists $\theta_{m,0}\in \Theta_m$ such that ${\bf{\Sigma}}_0={\bf{\Sigma}}_m(\theta_{m,0})$. Moreover, it is assumed that ${\bf{\Sigma}}_m(\theta_m)$ for all $\theta_m\in \bar{\Theta}_m$ is positive definite.

SEM is a statistical method for investigating relationships among latent variables. For more details on SEM, we refer readers to Shapiro \cite{Shapiro(1983),Shapiro(1985)}, Everitt \cite{Everitt(1984)}, and Mueller \cite{Mueller(1999)}. Since SEM is a confirmatory analysis method, 
the model needs to be specified in advance according to the theoretical framework of the relevant research field. In practice, however, one often has several candidate SEMs and 
it is necessary to select the most appropriate model among them. For this reason, a number of information criteria for SEM have been developed; see, for example, Huang \cite{Huang AIC(2017)}.
In particular, the Akaike information criterion (AIC) and the Bayesian information criterion (BIC) are commonly used for model selection. Note that these criteria are motivated by different purposes. AIC aims to select the model that minimizes the Kullback--Leibler divergence from a predictive point of view, whereas BIC aims to select the model with the largest posterior probability given the data. The most important difference between AIC and BIC is whether they have model-selection consistency. It is well known that BIC has model-selection consistency, whereas AIC does not.

SEM for time-series data has also been studied; see, e.g,  Czir{\'a}ky \cite{Cziraky(2004)}.
In recent years, we can easily access high-frequency data thanks to the development of information technology. In particular, high-frequency data are commonly observed in finance, and statistical inference for continuous-time stochastic processes based on high-frequency data
has been investigated; see, e.g., Yoshida \cite{Yoshida(1992)}, Genon-Catalot and Jacod \cite{Genon(1993)}, Kessler \cite{kessler(1997)}, 
Uchida and Yoshida \cite{Uchi-Yoshi(2012)}, and references therein. We also note that the information criterion for continuous-time stochastic processes based on high-frequency data has also been extensively examined; see, e.g., Uchida \cite{Uchida(2010)}, Fujii and Uchida \cite{Fuji(2014)}, Eguchi and Masuda \cite{Eguchi(2018), Eguchi(2024), Eguchi(2026)}, Eguchi and Uehara \cite{Eguchi(2021)}, and Uehara \cite{Uehara(2025)}. Recently, Kusano and Uchida \cite{Kusano(JJSD)} studied SEM for diffusion processes based on high-frequency data, and information criteria for the 
SEM were developed by Kusano and Uchida \cite{Kusano(AIC), Kusano(BIC)}. These studies enable us to investigate relationships among latent continuous-time stochastic processes based on high-frequency data such as stock prices. On the other hand, the discontinuous sample paths are frequently observed in practice. To analyze such data, modeling for jump-diffusion processes
is useful, and many researchers have investigated statistical inference for jump-diffusion processes based on high-frequency data; see, e.g., Shimizu and Yoshida \cite{Shimizu-Yoshida_JP}, Mancini \cite{Mancini(2004)}, Shimizu and Yoshida \cite{Shimizu(2006)}, Ogihara and Yoshida \cite{Ogihara(2011)}, Inatsugu and Yoshida \cite{Inatsugu(2021)}, and Amorino and Gloter \cite{Amorino(2021)}. To conduct SEM based on high-frequency data with discontinuous sample paths, Kusano and Uchida \cite{Kusano(jump)} considered SEM for jump-diffusion processes, and Kusano and Uchida \cite{Kusano(jumpAIC)} introduced the quasi-Akaike information criterion (QAIC) of the SEM. However, as shown in Theorem \ref{QAIC} of this paper, the QAIC lacks model-selection consistency. Therefore, in this paper, we propose a quasi-Bayesian information criterion (QBIC) of SEM for jump-diffusion processes and show that the criterion has model-selection consistency.

The structure of this paper is as follows. In Section \ref{notation}, we introduce some notation and assumptions. In Section \ref{main}, we propose a quasi-Bayesian
information criterion of SEM for jump-diffusion processes using an asymptotic expansion of the marginal quasi-log-likelihood, and also show that the proposed information criterion has model-selection consistency. Section \ref{simulation} is devoted to examples and simulation studies. In Section \ref{proofs}, we provide the proofs of the results presented in Section \ref{main}.
\section{Notation and Assumptions}\label{notation}
We first introduce the notation used throughout the paper. For a vector $v$, 
$v^{(i)}$ denotes its $i$-th component, $\Diag v$ denotes the diagonal matrix whose 
$i$-th diagonal entry is $v^{(i)}$, and $|v|$ is defined by
\begin{align*}
    |v|=\sqrt{\tr(vv^\top)}.
\end{align*}
For a matrix $M$, $M_{ij}$ denotes its $(i,j)$-th entry, and we define
\begin{align*}
    |M|=\sqrt{\tr(MM^\top)}.
\end{align*}
For a symmetric matrix $A$, $\lambda_{min}$ is the smallest eigenvalue of $A$. For a symmetric matrix $M$, $\vec M$ and $\vech M$ denote its vectorization and 
half-vectorization, respectively. Let 
$\mathcal{M}_p^{+}$ denote the set of all $p\times p$ real positive definite matrices. 
Let $C_{\uparrow}^k(\mathbb{R}^d)$ be the set of all functions on $\mathbb{R}^d$ 
that are continuously differentiable up to order $k$ and whose derivatives are of 
polynomial growth. The symbols $\stackrel{p}{\longrightarrow}$ and 
$\stackrel{d}{\longrightarrow}$ denote convergence in probability and convergence in 
distribution, respectively. A $d$-dimensional normal random variable with mean 
$\mu\in\mathbb{R}^d$ and covariance matrix $\Sigma\in\mathcal{M}_d^{+}$ is denoted by 
$N_d(\mu,\Sigma)$. $Z_d$ stands for a  $d$-dimensional standard normal random variable. $\chi^2_{r}$ represents the random variable which has the chi-squared distribution with $r>0$ degrees of freedom. For a stochastic process $\{S_t\}_{t\geq0}$, we write
\begin{align*}
    \Delta_i^n S = S_{t_i^n}-S_{t_{i-1}^n}.
\end{align*}
Set $\partial_{\theta}=\partial/\partial\theta$ and $\partial^2_{\theta}=\partial_{\theta}\partial_{\theta}^{\top}$. Let $B(\theta_{m,0},\rho)$ denote the closed ball in $\mathbb{R}^{q_m}$ with center $\theta_{m,0}$ and radius
$\rho>0$. Let $\pi_{m}(\theta_{m})$ be the prior density of Model $m$. Next, we state the assumptions. Set $(d_1,d_2,d_3,d_4)=(k_1,p_1,p_2,k_2)$.
\begin{enumerate}
    \vspace{2mm}
    \item[\bf{[A1]}]\ 
    For $i=1,2,3,4$, there exist $L_i>0$ and $\zeta_i(z_i)>0$ of at most polynomial growth in $z_i$ such that
    \begin{align*}
        |a_i(x_i)-a_i(y_i)|\leq L_i|x_i-y_i|,\ |c_i(x_i,z_i)-c_i(y_i,z_i)|\leq \zeta_i(z_i)|x_i-y_i|
    \end{align*}
    and
    \begin{align*}
        |c_i(x_i,z_i)|\leq \zeta_i(z_i)(1+|x_i|)
    \end{align*}
    for all $x_i,y_i\in \mathbb{R}^{d_i}$ and $z_i\in E_i$.
    \vspace{2mm}
    \item[\bf{[A2]}]\ 
     For $i=1,2,3,4$, $a_i\in C_{\uparrow}^4(\mathbb{R}^{d_i})$.
     \vspace{2mm}
    \item[\bf{[A3]}] \ For $i=1,2,3,4$, there exist $r_i>0$ and $K_i>0$ such that
    \begin{align*}
        f_i(z_i){\bf{1}}_{\{|z_i|\leq r_i\}}\leq K_i|z_i|^{1-d_i}
    \end{align*}
    and
    \begin{align*}
        \int |z_i|^{L}f_i(z_i)dz_i<\infty
    \end{align*}
    for all $L\geq 1$.
    \vspace{2mm}
    \item[\bf{[A4]}]\ 
    For $i=1,2,3,4$, it follows that
    \begin{align*}
        \inf_{x_i}|c_i(x_i,z_i)|\geq c_{i,0}|z_i|
    \end{align*}
    for some $c_{i,0}>0$ near the origin.
    \vspace{2mm}
    \item[\bf{[B]}]\ $\pi_m(\theta_m)$ is continuous and bounded in $\bar{\Theta}_m$, and $\pi_m(\theta_{m,0})>0$.
\end{enumerate}
\begin{remark}
{\bf{[A1]}}-{\bf{[A4]}} are the standard assumptions for jump-diffusion processes; see Kusano and Uchida \cite{Kusano(jump)} for further details. See Eguchi and Masuda \cite{Eguchi(2024)} for more details of {\bf{[B]}}.
\end{remark}
\section{Main results}\label{main}
In this paper, we judge whether a jump occurs in the interval $(t_{i-1}^n,t_i^n]$ from the size of the increment
$|\Delta_i^n X|$; see, for example, Shimizu and Yoshida \cite{Shimizu(2006)}, Ogihara and Yoshida \cite{Ogihara(2011)} and Kusano and Uchida \cite{Kusano(jump)}. More precisely, we judge that a jump occurs in the interval if $|\Delta_i^n X|>Dh_n^{\rho}$ 
or that no jump occurs otherwise, where $\rho\in[0,1/2)$ and $D>0$. By this truncation method, we define the quasi-likelihood as follows:
\begin{align*}
    {\bf L}_{m,n}(\theta_m)=\exp\{{\bf H}_{m,n}(\theta_m)\},
\end{align*}
where
\begin{align*}
    {\bf H}_{m,n}(\theta_m)
    &=
    -\frac{1}{2h_n}\sum_{i=1}^n
    (\Delta_i^n X)^\top{\bf\Sigma}_m(\theta_m)^{-1}(\Delta_i^n X)
    {\bf 1}_{\{|\Delta_i^n X|\le Dh_n^\rho\}}\\
    &\qquad
    -\frac{1}{2}\sum_{i=1}^n
    \log\det{\bf\Sigma}_m(\theta_m)\,
    {\bf 1}_{\{|\Delta_i^n X|\le Dh_n^\rho\}}.
\end{align*}
We refer to Kusano and Uchida \cite{Kusano(jump)} for further details on this quasi-likelihood. Here, we assume that $\rho\in[1/3,1/2)$.
The quasi-likelihood estimator \(\hat{\theta}_{m,n}\) is given by
\begin{align*}
    {\bf H}_{m,n}(\hat{\theta}_{m,n})
    =
    \sup_{\theta_m\in\bar{\Theta}_m}{\bf H}_{m,n}(\theta_m).
\end{align*}
It should be noted that \({\bf H}_{m,n}(\theta_m)\) is three times continuously differentiable with respect to \(\theta_m\), and that $\partial_{\theta_m}{\bf H}_{m,n}(\theta_m)$, $\partial_{\theta_m}^2{\bf H}_{m,n}(\theta_m)$, and $\partial_{\theta_m}^3{\bf H}_{m,n}(\theta_m)$ admit continuous extensions to the boundary $\partial\Theta_m$. A Schwarz-type information criterion aims to select the model that maximizes the marginal quasi-likelihood given by
\begin{align*}
    \int_{\Theta_m}\exp\bigl\{{\bf{H}}_{m,n}(\theta_m)\bigr\}\pi_m(\theta_m)d\theta_m.
\end{align*}
For details, see Kusano and Uchida \cite{Kusano(BIC)}.
Since the marginal quasi-likelihood is generally intractable, we consider its asymptotic expansion. Let
\begin{align*}
    {\bf{Y}}_m(\theta_m)&={\bf{H}}_m(\theta_m)-{\bf{H}}_m(\theta_{m,0})\\
    &=-\frac{1}{2}\tr
    \biggl\{\Bigl({\bf{\Sigma}}_m(\theta_m)^{-1}-{\bf{\Sigma}}_m(\theta_{m,0})^{-1}\Bigr){\bf{\Sigma}}_m(\theta_{m,0})\biggr\}-\frac{1}{2}
    \log\frac{\det{\bf{\Sigma}}_m(\theta_m)}{\det{\bf{\Sigma}}_m(\theta_{m,0})},
\end{align*}
where
\begin{align*}
    {\bf{H}}_m(\theta_m)=-\frac{1}{2}\tr\Bigl\{{\bf{\Sigma}}_m(\theta_m)^{-1}{\bf{\Sigma}}_m(\theta_{m,0})\Bigr\}-\frac{1}{2}\log\det {\bf{\Sigma}}_m(\theta_m).
\end{align*}
In addition, we impose the following assumption.
\begin{enumerate}
    \vspace{2mm}
    \item[\bf{[C1]}]There exists a constant $\chi>0$ such that
    \begin{align*}
        {\bf{Y}}_m(\theta_m)\leq -\chi|\theta_m-\theta_{m,0}|^2
    \end{align*}
    for all $\theta_m\in\Theta_m$.
\end{enumerate}
\begin{remark}
{\bf{[C1]}} is an identifiability condition. If 
\begin{align*}
    \left.\Delta_{m,0}=\frac{\partial}{\partial\theta_m^{\top}}\vech{{\bf{\Sigma}}_m(\theta_m)}\right|_{\theta_m=\theta_{m,0}}
\end{align*}
is full-column rank and
\begin{align*}
    {\bf{\Sigma}}_m(\theta_m)={\bf{\Sigma}}_m(\theta_{m,0})\Longrightarrow \theta_m=\theta_{m,0},
\end{align*}
then {\bf{[C1]}} holds.
\end{remark}
The marginal quasi-log-likelihood can be expanded asymptotically as follows.
\begin{theorem}\label{Hexpansion}
Under $\bf{[A1]}$-$\bf{[A4]}$, $\bf{[B]}$ and $\bf{[C1]}$, 
\begin{align*}
    \frac{1}{n}\log\int_{\Theta_m}\exp\bigl\{{\bf{H}}_{m,n}(\theta_m)\bigr\}\pi_m(\theta_m)d\theta_m
    =\frac{1}{n}{\bf{H}}_{m,n}(\hat{\theta}_{m,n})-\frac{q_m}{2n}\log n+O_p\Bigl(\frac{1}{n}\Bigr)
\end{align*}
as $n\longrightarrow\infty$.
\end{theorem}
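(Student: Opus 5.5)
The plan is the standard Laplace-type argument for Schwarz criteria of Eguchi and Masuda \cite{Eguchi(2024)}, adapted as in Kusano and Uchida \cite{Kusano(BIC)}. The first step is the change of variables $u=\sqrt{n}(\theta_m-\hat\theta_{m,n})$, which gives
\begin{align*}
\log\int_{\Theta_m}\exp\{{\bf H}_{m,n}(\theta_m)\}\pi_m(\theta_m)d\theta_m
={\bf H}_{m,n}(\hat\theta_{m,n})-\frac{q_m}{2}\log n+\log\int_{U_n}\exp\{{\bf H}_{m,n}(\hat\theta_{m,n}+u/\sqrt n)-{\bf H}_{m,n}(\hat\theta_{m,n})\}\pi_m(\hat\theta_{m,n}+u/\sqrt n)du,
\end{align*}
where $U_n=\{u:\hat\theta_{m,n}+u/\sqrt n\in\Theta_m\}$. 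After dividing by $n$, the statement reduces to showing that the last integral, call it $\mathcal{I}_n$, satisfies $\log\mathcal{I}_n=O_p(1)$. This requires both $\mathcal{I}_n=O_p(1)$ and $1/\mathcal{I}_n=O_p(1)$.

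The probabilistic inputs come from Kusano and Uchida \cite{Kusano(jump)} under [A1]--[A4]. First, $\hat\theta_{m,n}$ is $\sqrt n$-consistent, so $\hat\theta_{m,n}\in\Theta_m$ with probability tending to one; hence $\partial_{\theta_m}{\bf H}_{m,n}(\hat\theta_{m,n})=0$ there, and $U_n$ eventually contains a fixed ball around the origin. Second, $-\frac1n\partial^2_{\theta_m}{\bf H}_{m,n}(\theta_{m,0})\stackrel{p}{\longrightarrow}{\bf I}_{m,0}$, where ${\bf I}_{m,0}$ is positive definite by [C1] because it equals $-\partial^2{\bf Y}_m(\theta_{m,0})$. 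Third, we need uniform laws, namely $\sup_{\theta_m}|\frac1n{\bf H}_{m,n}(\theta_m)-\tilde{\bf H}_m(\theta_m)|=o_p(1)$ with $\tilde{\bf H}_m$ a version of ${\bf H}_m$ differing from it by a $\theta$-free term (and by $\frac{\log 2\pi}{2}$-type constants absent here). We also need $\sup_{\theta_m}|\frac1n\partial^3_{\theta_m}{\bf H}_{m,n}(\theta_m)|=O_p(1)$, which is immediate since ${\bf H}_{m,n}$ is $\frac1{h_n}\sum(\Delta_i^nX)(\Delta_i^nX)^\top{\bf 1}$ paired with smooth functions of $\theta$, and $\frac{1}{nh_n}\sum\Delta_i^nX\Delta_i^nX^\top{\bf 1}_{\{|\Delta_i^n X|\le Dh_n^\rho\}}\stackrel{p}{\longrightarrow}{\bf\Sigma}_0$ (the truncated-realized-volatility result used in \cite{Kusano(jump)}, where $\rho\ge1/3$ is needed). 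In fact ${\bf H}_{m,n}(\theta)=N_n\,[-\frac12\tr({\bf\Sigma}_m(\theta)^{-1}\hat{\bf\Sigma}_n)-\frac12\log\det{\bf\Sigma}_m(\theta)]$ with $N_n/n\to1$. Writing it this way makes the quasi-log-likelihood ratio explicitly $n$ times a deterministic smooth function of the consistent estimator $\hat{\bf\Sigma}_n$, which simplifies every uniform statement.

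For the lower bound, restrict the integral to $|u|\le1$. A Taylor expansion with third-order remainder gives ${\bf H}_{m,n}(\hat\theta+u/\sqrt n)-{\bf H}_{m,n}(\hat\theta)\ge-\frac12u^\top\hat{\bf I}u-C|u|^3/\sqrt n$ with $\hat{\bf I}=O_p(1)$. By [B], continuity of $\pi_m$ and $\pi_m(\theta_{m,0})>0$, the prior is bounded below near $\hat\theta$. Together these give $\mathcal{I}_n\ge c>0$ with probability tending to one.

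The upper bound is the main obstacle, because it needs a global quadratic decay of ${\bf H}_{m,n}$ over all of $\Theta_m$, not just near the maximizer. The plan is to use the explicit form above: ${\bf H}_{m,n}(\theta)-{\bf H}_{m,n}(\hat\theta)=N_n[\hat{\bf Y}_n(\theta)-\hat{\bf Y}_n(\hat\theta)]$, where $\hat{\bf Y}_n$ is ${\bf Y}_m$ with ${\bf\Sigma}_m(\theta_{m,0})$ replaced by $\hat{\bf\Sigma}_n$. Since $\hat{\bf\Sigma}_n\to{\bf\Sigma}_0$ and, by [C1] together with the smoothness and positive definiteness of ${\bf\Sigma}_m$ on $\bar\Theta_m$, the quadratic bound is stable under small perturbations of ${\bf\Sigma}_0$, we get $\hat{\bf Y}_n(\theta)-\hat{\bf Y}_n(\hat\theta)\le-\frac{\chi}{2}|\theta-\hat\theta|^2$ on $\bar\Theta_m$ with probability tending to one. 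Two estimates justify this: near $\hat\theta$ the Hessian of $\hat{\bf Y}_n$ is close to $-{\bf I}_{m,0}$, and away from $\hat\theta$ the uniform closeness of $\hat{\bf Y}_n$ to ${\bf Y}_m$ combines with [C1] to give the bound. With this in hand, $\mathcal{I}_n\le\sup\pi_m\int e^{-(\chi N_n/2n)|u|^2}du=O_p(1)$. Combining the upper and lower bounds yields $\log\mathcal{I}_n=O_p(1)$, and dividing by $n$ proves the claim.
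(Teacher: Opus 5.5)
Your proposal is correct, but it reaches the conclusion by a different route from the paper.

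The paper proves the stronger statement
\[
\int_{\hat{\mathbb{U}}_n}\hat{\bf Z}_n(u)\pi(\hat\theta_n+n^{-1/2}u)\,du\stackrel{p}{\longrightarrow}\pi(\theta_0)(2\pi)^{q/2}|\det{\bf\Gamma}(\theta_0)|^{-1/2}>0.
\]
It splits at $|u|=n^{3/8}$.
\begin{itemize}
\item On $A_n$ it uses a third-order Taylor expansion, the uniform convergence of ${\bf\Gamma}_n$ and the bound on $\partial^3_\theta{\bf H}_n$. These dominate $\hat{\bf Z}_n$ by $e^{-\lambda_0|u|^2}$, and the limit is obtained by dominated convergence along a.s.\ subsequences.
\item On $A_n^c$ it combines the $\sqrt n$-rate $\sqrt n\sup_\theta|{\bf Y}_n-{\bf Y}|=O_p(1)$ with [C1] to get $\hat{\bf Z}_n\le e^{-Cn^{3/4}}$. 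This beats the $O(n^{q/2})$ volume of $\hat{\mathbb U}_n$.
\end{itemize}

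You instead sandwich the integral between two positive $O_p(1)$ bounds. For the upper bound you use the fact that ${\bf H}_{m,n}(\theta)=N_n\,g(\theta,\hat{\bf\Sigma}_n)$ is a deterministic smooth function of one consistent statistic. A perturbation argument then transports [C1] to a global quadratic bound centred at $\hat\theta$.

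To make that step airtight:
\begin{itemize}
\item Take the ``near'' region to be a fixed ball $B(\theta_0,\delta_0)\subset\Theta$ that does not shrink with $n$.
\item Inside it, use $\lambda_{min}\,{\bf\Gamma}(\theta_0)\ge 2\chi$, which follows from [C1]. This bounds the Hessian of $g(\cdot,S)$ by $-(2\chi-\epsilon)\mathbb{I}$ uniformly for $S$ near ${\bf\Sigma}_0$.
\item Outside it, use $\hat{\bf Y}_n(\hat\theta)\ge\hat{\bf Y}_n(\theta_0)=0$ and $\sup_\theta|\hat{\bf Y}_n(\theta)-{\bf Y}(\theta)|\le C|\hat{\bf\Sigma}_n-{\bf\Sigma}_0|$. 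These give $\hat{\bf Y}_n(\theta)-\hat{\bf Y}_n(\hat\theta)\le-\chi|\theta-\theta_0|^2+o_p(1)$, which dominates $-\frac{\chi}{2}|\theta-\hat\theta|^2$ there.
\end{itemize}

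Your route is shorter. It needs only consistency of $\hat{\bf\Sigma}_n$ and $N_n/n\to1$: no $\sqrt n$-rates, no asymptotic normality of $\hat\theta_n$, and no subsequence arguments. The cost is that it relies on the sufficient-statistic structure of this particular Gaussian-type quasi-likelihood, and it leaves the $O_p(1)$ term unidentified. The paper's argument is the general Laplace-approximation template of Eguchi and Masuda. It identifies the constant and carries over to quasi-likelihoods without such structure, since it uses only the abstract properties in Lemmas 1--3.
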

\noindent
Therefore, we define the QBIC of SEM for jump-diffusion processes as
\begin{align*}
    {\bf{QBIC}}_{n}(m)=-2{\bf{H}}_{m,n}(\hat{\theta}_{m,n})+
    q_m\log n.
\end{align*}
Note that we select the model that minimizes \({\bf QBIC}_n(m)\).

We next examine the case in which the competing models include both correctly specified and misspecified parametric models. Note that  Model $m$ is misspecified if
\begin{align*}
    {\bf{\Sigma}}_0\neq {\bf{\Sigma}}_m(\theta_m)
\end{align*}
for all $\theta_m\in\Theta_m$. Let
\begin{align*}
    \mathcal{M}=\biggl\{m\in\{1,\ldots,M\}\ \Big|\ \mbox{There exists}\ \theta_{m,0}\in\Theta_{m}\ \mbox{such that}\ {\bf{\Sigma}}_0={\bf{\Sigma}}_{m}(\theta_{m,0}).\biggr\}
\end{align*}
and $\mathcal{M}^{c}=\{1,\ldots,M\}\setminus\mathcal{M}$. We define the optimal model $m^*$ as the model with the smallest number of parameters among correctly specified models, that is,
\begin{align*}
    m^*= {\rm{argmin}}_{m\in\mathcal{M}}\ {\rm{dim}}(\Theta_m),
\end{align*}
where it is supposed that the optimal model is unique. Furthermore, we define the optimal parameter $\bar{\theta}_{m}$ as follows:
\begin{align*}
    {\bf{H}}_{m}(\bar{\theta}_{m})=\sup_{\theta_m\in\Theta_m}{\bf{H}}_{m}(\theta_{m}).
\end{align*}
For $m\in\mathcal{M}$, it holds that $\bar{\theta}_{m}=\theta_{m,0}$. We also impose the following assumption:
\begin{enumerate}
    \vspace{3mm}
    \item[{\bf{[C2]}}] ${\bf{H}}_{m}(\theta_m)={\bf{H}}_{m}(\bar{\theta}_{m})\Longrightarrow \theta_m=\bar{\theta}_{m}$.
    \vspace{3mm}
\end{enumerate}
First, we have the following result for misspecified models.
\begin{theorem}\label{BICcons2} 
Under $\bf{[A1]}$-$\bf{[A4]}$, {\bf{[B]}} and {\bf{[C2]}}, as $n\longrightarrow\infty$,
\begin{align*}
    {\bf{P}}\Bigl({\bf{QBIC}}_{n}(m^*)<{\bf{QBIC}}_{n}(m)\Bigr)
    &\longrightarrow 1
\end{align*}
for all $m\in\mathcal{M}^c$.
\end{theorem}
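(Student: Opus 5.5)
The plan is to reduce the comparison to the deterministic contrast ${\bf H}_m$. We divide both criteria by $n$:
\begin{align*}
    \frac{1}{n}\Bigl\{{\bf QBIC}_n(m)-{\bf QBIC}_n(m^*)\Bigr\}
    =\frac{2}{n}{\bf H}_{m^*,n}(\hat\theta_{m^*,n})-\frac{2}{n}{\bf H}_{m,n}(\hat\theta_{m,n})+\frac{(q_m-q_{m^*})\log n}{n}.
\end{align*}
The last term is $o(1)$. It therefore suffices to show that the difference of the first two terms converges in probability to the strictly positive constant $2\{{\bf H}_{m^*}(\theta_{m^*,0})-{\bf H}_m(\bar\theta_m)\}$, and then to conclude by a standard argument.

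\textbf{Step 1: uniform convergence.} We prove, for every model $k$,
\begin{align*}
    \sup_{\theta_k\in\bar\Theta_k}\Bigl|\frac{1}{n}{\bf H}_{k,n}(\theta_k)-{\bf H}_k(\theta_k)\Bigr|\stackrel{p}{\longrightarrow}0 .
\end{align*}
In ${\bf H}_k(\theta_k)$ one should read ${\bf\Sigma}_0$ in place of ${\bf\Sigma}_k(\theta_{k,0})$, which is the natural definition for misspecified $k$. Writing $\frac1n{\bf H}_{k,n}(\theta_k)=-\frac12\tr\{{\bf\Sigma}_k(\theta_k)^{-1}\hat{\bf\Sigma}_n\}-\frac12 \frac{N_n}{n}\log\det{\bf\Sigma}_k(\theta_k)$ separates the parameter from the data, where
\begin{align*}
    \hat{\bf\Sigma}_n=\frac{1}{nh_n}\sum_{i=1}^n\Delta_i^nX(\Delta_i^nX)^\top{\bf 1}_{\{|\Delta_i^n X|\le Dh_n^\rho\}},\qquad N_n=\sum_{i=1}^n{\bf 1}_{\{|\Delta_i^n X|\le Dh_n^\rho\}}.
\end{align*}
By the truncated realized-volatility results under [A1]--[A4] (Kusano and Uchida \cite{Kusano(jump)}; Shimizu and Yoshida \cite{Shimizu(2006)}), $\hat{\bf\Sigma}_n\stackrel{p}{\to}{\bf\Sigma}_0$ and $N_n/n\stackrel{p}{\to}1$. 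Here ${\bf\Sigma}_0$ is the continuous-part volatility of $X$, obtained from the linear structure of $X$ in the latent processes.

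Since ${\bf\Sigma}_k(\theta_k)$ is continuous and positive definite on the compact set $\bar\Theta_k$, both $\sup_{\theta_k}|{\bf\Sigma}_k(\theta_k)^{-1}|$ and $\sup_{\theta_k}|\log\det{\bf\Sigma}_k(\theta_k)|$ are finite. Uniform convergence then follows directly, with no equicontinuity argument needed.

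\textbf{Step 2: convergence of the maximized values.} From Step 1,
\begin{align*}
    \Bigl|\frac1n{\bf H}_{k,n}(\hat\theta_{k,n})-\sup_{\bar\Theta_k}{\bf H}_k\Bigr|\le\sup_{\bar\Theta_k}\Bigl|\frac1n{\bf H}_{k,n}-{\bf H}_k\Bigr|\stackrel{p}{\longrightarrow}0 .
\end{align*}
Because ${\bf H}_k$ is continuous, its supremum over $\bar\Theta_k$ equals ${\bf H}_k(\bar\theta_k)$. This argument uses only sup-differences, so the consistency of $\hat\theta_{k,n}$ is not required. [C2] serves only to make $\bar\theta_m$ well defined.

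\textbf{Step 3 (main obstacle): strict positivity of the gap.} Set $f(S)=-\frac12\tr(S^{-1}{\bf\Sigma}_0)-\frac12\log\det S$ on $\mathcal M_p^+$. It is uniquely maximized at $S={\bf\Sigma}_0$, since $f(S)-f({\bf\Sigma}_0)$ is minus one half the Gaussian Kullback--Leibler divergence. For $m^*\in\mathcal M$ we have ${\bf H}_{m^*}(\theta_{m^*,0})=f({\bf\Sigma}_0)$. For $m\in\mathcal M^c$ we need $\sup_{\bar\Theta_m}f({\bf\Sigma}_m(\theta_m))<f({\bf\Sigma}_0)$.

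The delicate point is the boundary: misspecification is stated only on the open set $\Theta_m$. I would take $\bar\theta_m$ as the maximizer over the compact closure, which exists by continuity. I would then argue, or assume as implicit in the definition of misspecification together with [C2], that ${\bf\Sigma}_m(\bar\theta_m)\ne{\bf\Sigma}_0$. Given that, uniqueness of the maximizer of $f$ yields the gap
\begin{align*}
    c:={\bf H}_{m^*}(\theta_{m^*,0})-{\bf H}_m(\bar\theta_m)>0 .
\end{align*}

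\textbf{Conclusion.} By Steps 1--3, $\frac1n\{{\bf QBIC}_n(m)-{\bf QBIC}_n(m^*)\}\stackrel{p}{\to}2c>0$. Hence ${\bf P}({\bf QBIC}_n(m^*)<{\bf QBIC}_n(m))\ge{\bf P}(|\text{remainder}|<c)\to1$, where the remainder is the deviation of this normalized difference from its limit $2c$.
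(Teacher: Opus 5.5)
Your proposal is correct and has the same skeleton as the paper's proof, which is given only by citation. You divide by $n$ and show $\frac1n{\bf H}_{m,n}(\hat\theta_{m,n})\to{\bf H}_m(\bar\theta_m)$ for every model. You then use the strict Gaussian Kullback--Leibler gap, with the penalty difference $(q_m-q_{m^*})\log n=o(n)$.

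The one genuine difference is Step 2. The paper defers to Lemma 10 of Kusano and Uchida's QAIC paper and to Theorem 3 of their diffusion QBIC paper. That route appears to control the estimator $\hat\theta_{m,n}$ of the misspecified model, and this is the role [C2] plays there. Your bound
$\bigl|\sup_{\bar\Theta_k}\frac1n{\bf H}_{k,n}-\sup_{\bar\Theta_k}{\bf H}_k\bigr|\le\sup_{\bar\Theta_k}\bigl|\frac1n{\bf H}_{k,n}-{\bf H}_k\bigr|$
needs no consistency of $\hat\theta_{m,n}$. Together with the separation of data and parameter in Step 1, it dispenses with [C2]. It also dispenses with [B], since the prior never enters ${\bf QBIC}_n$. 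This is more elementary and slightly more general than the cited route.

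In Step 3, however, do not appeal to [C2]. A uniqueness condition cannot exclude a maximizer $\bar\theta_m\in\partial\Theta_m$ with ${\bf\Sigma}_m(\bar\theta_m)={\bf\Sigma}_0$, which the definition of misspecification (stated only on the open set $\Theta_m$) allows. In that situation the gap is zero. Moreover, ${\bf H}_{m^*,n}(\hat\theta_{m^*,n})-{\bf H}_{m,n}(\hat\theta_{m,n})$ is then bounded above by an $O_p(1)$ quantity, by comparing both with the unrestricted Gaussian maximum. So the conclusion genuinely fails when $q_m<q_{m^*}$.

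What closes the step is the convention implicit in the paper's definition of $\bar\theta_m$: it is a point of $\Theta_m$ attaining $\sup_{\Theta_m}{\bf H}_m$. Misspecification then gives ${\bf\Sigma}_m(\bar\theta_m)\neq{\bf\Sigma}_0$ directly, and your uniqueness argument for $f$ on $\mathcal{M}_p^{+}$ finishes the proof. You could equivalently assume ${\bf\Sigma}_0\notin\{{\bf\Sigma}_m(\theta_m):\theta_m\in\bar\Theta_m\}$ outright.
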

\noindent
Theorem \ref{BICcons2} shows that QBIC does not asymptotically select misspecified models. We say that Model $i$ is nested in Model $j$ if $q_i<q_j$ and there exist a matrix $F_{i,j}\in\mathbb{R}^{q_j\times q_i}$ satisfying $F_{i,j}^{\top}F_{i,j}=\mathbb{I}_{q_i}$ and a vector $c_j\in\mathbb{R}^{q_j}$ such that
\begin{align*}
    {\bf{H}}_{i,n}(\theta_{i})={\bf{H}}_{j,n}(F_{i,j}\theta_{i}+c_j)
\end{align*}
for all $\theta_i\in\Theta_i$. Kusano and Uchida \cite{Kusano(jumpAIC)} proposed the quasi-Akaike information criterion of SEM for jump-diffusion processes as follows:
\begin{align*}
    {\bf{QAIC}}_{n}(m)=-2{\bf{H}}_{m,n}(\hat{\theta}_{m,n})+
    2q_m.
\end{align*}
As with QBIC, QAIC also does not asymptotically select misspecified models; see Proposition 2 in Kusano and Uchida \cite{Kusano(jumpAIC)}. However, as shown in the following theorem, QAIC does not have model-selection consistency.
\begin{theorem}\label{QAIC}
Under $\bf{[A1]}$-$\bf{[A4]}$, {\bf{[B]}} and {\bf{[C1]}}, if Model $m^*$ is nested in Model $m\in\mathcal{M}\setminus\{m^*\}$, then
\begin{align*}
    &\lim_{n\longrightarrow\infty} {\bf{P}}\Bigl({\bf{QAIC}}_n(m)<{\bf{QAIC}}_n(m^*)\Bigr)\\
    &\qquad\qquad\quad={\bf{P}}\Bigl(\chi^2_{q_m-q_{m^*}}>2(q_{m}-q_{m^*})\Bigr)>0.
\end{align*}
\end{theorem}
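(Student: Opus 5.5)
We reduce the event to a statement about the quasi-likelihood ratio. Since ${\bf QAIC}_n(m)<{\bf QAIC}_n(m^*)$ is equivalent to
\begin{align*}
    2\bigl\{{\bf H}_{m,n}(\hat{\theta}_{m,n})-{\bf H}_{m^*,n}(\hat{\theta}_{m^*,n})\bigr\}>2(q_m-q_{m^*}),
\end{align*}
it suffices to show that the left-hand side converges in distribution to $\chi^2_{q_m-q_{m^*}}$. The limit law is continuous, so the portmanteau theorem then gives the limit of the probability. Positivity is immediate because $q_m-q_{m^*}\geq 1$ and a chi-squared variable has unbounded support.

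For the convergence, we Taylor-expand each ${\bf H}_{k,n}$, $k\in\{m,m^*\}$, around its estimator. Both models are correctly specified, and we use the results of Kusano and Uchida \cite{Kusano(jump)} under [A1]--[A4] and [C1]:
\begin{itemize}
\item $\hat{\theta}_{k,n}$ is consistent;
\item $n^{-1/2}\partial_{\theta_k}{\bf H}_{k,n}(\theta_{k,0})\stackrel{d}{\longrightarrow}N_{q_k}(0,{\bf I}_k)$;
\item $-n^{-1}\partial^2_{\theta_k}{\bf H}_{k,n}(\theta_k)\stackrel{p}{\longrightarrow}{\bf I}_k$ uniformly near $\theta_{k,0}$.
\end{itemize}
Here ${\bf I}_k=\Delta_{k,0}^\top W_0\Delta_{k,0}$, with $W_0=\frac12 \mathbb{D}^\top({\bf\Sigma}_0^{-1}\otimes{\bf\Sigma}_0^{-1})\mathbb{D}$ the Gaussian information in $\vech{\bf\Sigma}$ and $\mathbb{D}$ the duplication matrix. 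The model has a Gaussian-type contrast, so the information equality holds and the same ${\bf I}_k$ appears in the score and Hessian limits.

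The truncation step enters through these inputs. With $\rho\in[1/3,1/2)$, the jump indicator ${\bf 1}_{\{|\Delta_i^nX|\le Dh_n^\rho\}}$ correctly removes jump increments. The score at $\theta_{k,0}$ then equals $n^{-1/2}$ times the score based on the continuous martingale parts, plus $o_p(1)$. This is where [A3] and [A4] are used, and the argument follows \cite{Kusano(jump)}.

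With these inputs, write $S_n=n^{-1/2}\partial_{\vech{\bf\Sigma}}$ of the Gaussian contrast evaluated at ${\bf\Sigma}_0$. This score vector is common to both models: by the chain rule, the score of model $k$ is $\Delta_{k,0}^\top S_n$, and $S_n\stackrel{d}{\longrightarrow}N(0,W_0)$. The expansion gives
\begin{align*}
    2\{{\bf H}_{k,n}(\hat{\theta}_{k,n})-{\bf H}_{k,n}(\theta_{k,0})\}=S_n^\top\Delta_{k,0}{\bf I}_k^{-1}\Delta_{k,0}^\top S_n+o_p(1).
\end{align*}
Both models share the true ${\bf\Sigma}_0$, so ${\bf H}_{m,n}(\theta_{m,0})={\bf H}_{m^*,n}(\theta_{m^*,0})$ exactly; both are the same function of ${\bf\Sigma}_0$ and the data.

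Put $Z=W_0^{1/2}$-whitened $S_n$ and $P_k=W_0^{1/2}\Delta_{k,0}{\bf I}_k^{-1}\Delta_{k,0}^\top W_0^{1/2}$. Then $P_k$ is the orthogonal projection onto the column space of $W_0^{1/2}\Delta_{k,0}$, and the difference of the two log-likelihood ratios converges to $Z^\top(P_m-P_{m^*})Z$ with $Z$ standard normal.

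Nestedness supplies the remaining step. From ${\bf H}_{m^*,n}(\theta)={\bf H}_{m,n}(F\theta+c)$, and using [C1] identifiability, we get ${\bf\Sigma}_{m^*}(\theta)={\bf\Sigma}_m(F\theta+c)$ and $\theta_{m,0}=F\theta_{m^*,0}+c$. Hence $\Delta_{m^*,0}=\Delta_{m,0}F$, so the range of $P_{m^*}$ is contained in the range of $P_m$. Therefore $P_m-P_{m^*}$ is a projection of rank $q_m-q_{m^*}$, and the limit is $\chi^2_{q_m-q_{m^*}}$.

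The main obstacle is justifying the expansions in the jump setting. This requires the uniform Hessian convergence, the score CLT after truncation, and showing that $\hat{\theta}_{k,n}$ is an interior root with $\sqrt n$-rate. We will import these from the asymptotic normality results of \cite{Kusano(jump)} rather than reprove them. Full column rank of $\Delta_{m,0}$, which makes ${\bf I}_m$ invertible, follows from [C1], since the Hessian of ${\bf Y}_m$ at $\theta_{m,0}$ equals $-{\bf I}_m$.
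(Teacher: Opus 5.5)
Your proposal is correct, but it is organized differently from the paper's proof. The paper never expands the two models separately. It uses the nesting identity to write ${\bf H}_{m^*,n}(\hat\theta_{m^*,n})={\bf H}_{m,n}({\bf F}(\hat\theta_{m^*,n}))$ and makes a single second-order Taylor expansion of ${\bf H}_{m,n}$ about $\hat\theta_{m,n}$, where the score vanishes. The QAIC difference then becomes a quadratic form in $\sqrt n({\bf F}(\hat\theta_{m^*,n})-\hat\theta_{m,n})$. Proposition 2 shows that this vector converges to ${\bf G}_m(\theta_{m,0}){\bf\Gamma}_m(\theta_{m,0})^{1/2}Z_{q_m}$, with $F^\top$ times the score of model $m$ serving as the score of model $m^*$. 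Everything stays in $\mathbb{R}^{q_m}$, and the limit is $Z_{q_m}^\top{\bf P}_m(\theta_{m,0})Z_{q_m}$.

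You instead run the classical Wilks decomposition, using the SEM-specific fact that every model's contrast is the same function of ${\bf\Sigma}$. This gives ${\bf H}_{m,n}(\theta_{m,0})={\bf H}_{m^*,n}(\theta_{m^*,0})$ exactly, together with one common score $S_n$ in $\vech{\bf\Sigma}$-coordinates. Joint convergence of the two likelihood ratios is then automatic, and the limit is visibly $Z^\top(P_m-P_{m^*})Z$ with nested orthogonal projections.

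Your route buys transparency. The projection and rank property, which the paper asserts for ${\bf P}_m(\theta_{m,0})$ without verification, is immediate in your formulation. Nesting enters only through $\mathrm{col}(\Delta_{m^*,0})\subset\mathrm{col}(\Delta_{m,0})$, so non-affine or merely local nestings would also be covered. The price is that you need the CLT for $S_n$ itself, i.e.\ for $\sqrt n(\check{\bf\Sigma}_n-{\bf\Sigma}_0)$ together with $\sqrt n(N_n/n-1)=o_p(1)$. The paper's route, by contrast, works for any contrast satisfying the nesting identity.

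One small correction concerns your translation of contrast-level nesting into $\Delta_{m^*,0}=\Delta_{m,0}F$: [C1] does not yield ${\bf\Sigma}_{m^*}(\theta)={\bf\Sigma}_m(F\theta+c)$. A clean fix has two steps.
\begin{enumerate}
\item Let $n\to\infty$ in $n^{-1}{\bf H}_{m^*,n}(\theta)=n^{-1}{\bf H}_{m,n}(F\theta+c)$. This gives ${\bf Y}_m(F\theta_{m^*,0}+c)=0$, so [C1] yields $F\theta_{m^*,0}+c=\theta_{m,0}$.
\item Differentiate the nesting identity at $\theta_{m^*,0}$ to get $\Delta_{m^*,0}^\top S_n=F^\top\Delta_{m,0}^\top S_n$ for every $n$. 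Since the limit $N(0,W_0)$ of $S_n$ is nondegenerate, this forces $\Delta_{m^*,0}=\Delta_{m,0}F$.
\end{enumerate}
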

\noindent
On the other hand, the following theorem shows that QBIC has model-selection consistency.
\begin{theorem}\label{BICcons1} Under $\bf{[A1]}$-$\bf{[A4]}$, {\bf{[B]}} and {\bf{[C1]}}, if Model $m^*$ is nested in Model 
$m\in\mathcal{M}\setminus\{m^*\}$, then
\begin{align*}
    {\bf{P}}\Bigl({\bf{QBIC}}_{n}(m^*)<{\bf{QBIC}}_{n}(m)\Bigr)
    &\longrightarrow 1
\end{align*}
as $n\longrightarrow\infty$.
\end{theorem}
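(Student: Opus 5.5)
The event ${\bf{QBIC}}_{n}(m^*)<{\bf{QBIC}}_{n}(m)$ is equivalent to
\begin{align*}
    2\bigl\{{\bf{H}}_{m,n}(\hat{\theta}_{m,n})-{\bf{H}}_{m^*,n}(\hat{\theta}_{m^*,n})\bigr\}<(q_m-q_{m^*})\log n .
\end{align*}
Since $q_m-q_{m^*}\geq 1$ and $\log n\to\infty$, it suffices to show that the left-hand side, the quasi-likelihood ratio statistic, is $O_p(1)$. This is the same statistic that appears in Theorem \ref{QAIC}. The argument there (which I would reuse) gives the limit $\chi^2_{q_m-q_{m^*}}$, so in particular the statistic is tight. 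I will carry the argument out directly so the proof does not depend on the order of presentation.

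\textbf{Step 1.} I would first note that ${\bf{H}}_{m,n}(\hat{\theta}_{m,n})-{\bf{H}}_{m^*,n}(\hat{\theta}_{m^*,n})\geq 0$. By the nesting relation ${\bf{H}}_{m^*,n}(\theta_{m^*})={\bf{H}}_{m,n}(F_{m^*,m}\theta_{m^*}+c_m)$, the maximum over the larger model dominates. So only an upper bound is needed. Next I bound the difference by
\begin{align*}
    \bigl\{{\bf{H}}_{m,n}(\hat{\theta}_{m,n})-{\bf{H}}_{m,n}(\theta_{m,0})\bigr\}+\bigl\{{\bf{H}}_{m^*,n}(\theta_{m^*,0})-{\bf{H}}_{m^*,n}(\hat{\theta}_{m^*,n})\bigr\}+\bigl\{{\bf{H}}_{m,n}(\theta_{m,0})-{\bf{H}}_{m^*,n}(\theta_{m^*,0})\bigr\}.
\end{align*}
The last bracket vanishes. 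Both ${\bf{H}}_{m,n}(\theta_{m,0})$ and ${\bf{H}}_{m^*,n}(\theta_{m^*,0})$ depend on $\theta$ only through ${\bf{\Sigma}}_m(\theta_{m,0})={\bf{\Sigma}}_0={\bf{\Sigma}}_{m^*}(\theta_{m^*,0})$, since both models are correctly specified. The second bracket is nonpositive, so it can simply be dropped.

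\textbf{Step 2.} It remains to show ${\bf{H}}_{m,n}(\hat{\theta}_{m,n})-{\bf{H}}_{m,n}(\theta_{m,0})=O_p(1)$ for a correctly specified $m$. I would Taylor expand around $\hat{\theta}_{m,n}$, or equivalently write the statistic as
\begin{align*}
    \sup_{\theta}\Bigl\{\partial_{\theta}{\bf{H}}_{m,n}(\theta_{m,0})^{\top}(\theta-\theta_{m,0})+\tfrac12(\theta-\theta_{m,0})^{\top}\partial^2_{\theta}{\bf{H}}_{m,n}(\tilde\theta)(\theta-\theta_{m,0})\Bigr\}.
\end{align*}
This uses three ingredients from Kusano and Uchida \cite{Kusano(jump)} under [A1]--[A4] and the truncation with $\rho\in[1/3,1/2)$:
\begin{enumerate}[(i)]
    \item $n^{-1/2}\partial_{\theta}{\bf{H}}_{m,n}(\theta_{m,0})=O_p(1)$, in fact asymptotically normal;
    \item $-n^{-1}\partial^2_{\theta}{\bf{H}}_{m,n}(\theta)\to {\bf{I}}_m(\theta_{m,0})$ uniformly on a shrinking neighbourhood, where ${\bf{I}}_m(\theta_{m,0})=-\partial^2_\theta{\bf{Y}}_m(\theta_{m,0})$ is positive definite by [C1];
    \item $\sqrt n(\hat{\theta}_{m,n}-\theta_{m,0})=O_p(1)$.
\end{enumerate}
Combining these, the difference equals $\frac12 (n^{-1/2}\partial_\theta{\bf{H}}_{m,n})^{\top}{\bf{I}}_m^{-1}(n^{-1/2}\partial_\theta{\bf{H}}_{m,n})+o_p(1)=O_p(1)$. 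The term $\sqrt n(\hat\theta_{m,n}-\theta_{m,0})$ comes from $0=\partial_\theta{\bf{H}}_{m,n}(\hat\theta_{m,n})$; because $\theta_{m,0}\in\Theta_m$ is interior, $\hat\theta_{m,n}$ is interior with probability tending to one.

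\textbf{Main obstacle.} The main obstacle is ingredient (i): controlling the jump truncation so that $n^{-1/2}\partial_\theta{\bf{H}}_{m,n}(\theta_{m,0})$ stays bounded. Two error sources must be handled. First, increments containing jumps could survive the threshold $Dh_n^{\rho}$. Second, the sum includes the term $-\frac12\sum_i\log\det{\bf{\Sigma}}\,{\bf 1}_{\{\cdot\}}$, whose score is centred only up to the count of discarded increments. I would cite the estimates of \cite{Kusano(jump)}, using [A3]--[A4] and $\rho\ge 1/3$, to show these errors are $o_p(\sqrt n)$ after differentiation. If only $O_p(1)$ of the ratio statistic is needed, a cruder bound suffices: any $o_p(\log n)$ bound closes the argument, since the penalty grows like $\log n$. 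This gives considerable slack.
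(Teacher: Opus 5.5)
Your proof is correct. Its overall strategy matches the paper's: the quasi-likelihood ratio $2\{{\bf{H}}_{m,n}(\hat{\theta}_{m,n})-{\bf{H}}_{m^*,n}(\hat{\theta}_{m^*,n})\}$ is $O_p(1)$, while the penalty gap $(q_m-q_{m^*})\log n$ diverges.

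The paper only points to Theorem 2 of Kusano and Uchida \cite{Kusano(BIC)} together with Lemmas \ref{thetahat} and \ref{problemma}. In the nested setting, the natural version of that argument parallels the proof of Theorem \ref{QAIC}. It rewrites ${\bf{H}}_{m^*,n}(\hat{\theta}_{m^*,n})={\bf{H}}_{m,n}({\bf{F}}(\hat{\theta}_{m^*,n}))$ and expands ${\bf{H}}_{m,n}$ around $\hat{\theta}_{m,n}$. It then uses $\sqrt{n}({\bf{F}}(\hat{\theta}_{m^*,n})-\hat{\theta}_{m,n})=F\hat{u}_{m^*,n}-\hat{u}_{m,n}=O_p(1)$. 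This needs the reparametrization $F$ and $\sqrt{n}$-consistency in both models, and when pushed further it yields the exact $\chi^2_{q_m-q_{m^*}}$ limit required for QAIC.

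You instead route through the true value. You use ${\bf{H}}_{m^*,n}(\hat{\theta}_{m^*,n})\geq{\bf{H}}_{m^*,n}(\theta_{m^*,0})={\bf{H}}_{m,n}(\theta_{m,0})$, which holds because both contrasts are evaluated at ${\bf{\Sigma}}_0$ on the same data. This leaves only a one-model quantity. On $\{\hat{\theta}_{m,n}\in\Theta_m\}$ it equals $\hat{u}_{m,n}^{\top}\{\int_0^1(1-\lambda){\bf{\Gamma}}_{m,n}(\hat{\theta}_{m,n}+\lambda(\theta_{m,0}-\hat{\theta}_{m,n}))d\lambda\}\hat{u}_{m,n}$. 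It is therefore bounded by $\frac12\sup_{\theta}|{\bf{\Gamma}}_{m,n}(\theta)|\,|\hat{u}_{m,n}|^2=O_p(1)$.

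Your route is more elementary and more general:
\begin{enumerate}[(i)]
\item The nesting relation is never needed; your Step 1 nonnegativity is superfluous, since only an upper bound matters.
\item No asymptotics for Model $m^*$ are used.
\item Uniqueness of $m^*$ forces $q_m>q_{m^*}$, so you get consistency against every $m\in\mathcal{M}\setminus\{m^*\}$, nested or not.
\end{enumerate}
What you give up is only the distributional limit of the ratio, which Theorem \ref{BICcons1} does not need.

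One thing to tidy is the supremum form in your Step 2, where the intermediate point depends on $\theta$. The clean version is the expansion around $\hat{\theta}_{m,n}$ using $\partial_{\theta}{\bf{H}}_{m,n}(\hat{\theta}_{m,n})=0$. That version also does not require positive definiteness of the limit Hessian.
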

\section{Examples and Simulation results}\label{simulation}
\subsection{True model}
The $5$-dimensional observable process $\{X_{1,0,t}\}_{t\geq 0}$ is defined by the following true factor model:
\begin{align*}
    X_{1,0,t}&={\bf{\Lambda}}_{1,0}\xi_{0,t}+\delta_{0,t},
\end{align*}
where  $\{\xi_{0,t}\}_{t\geq 0}$ and $\{\delta_{0,t}\}_{t\geq 0}$ are $1$ and $5$-dimensional latent processes, respectively, and
\begin{align*}
    {\bf{\Lambda}}_{1,0}=\begin{pmatrix}
    1 & 0.2 & 0.4 & 0.1 & 0.7
    \end{pmatrix}^{\top}.
\end{align*}
The stochastic processes $\{\xi_{0,t}\}_{t\geq 0}$ and $\{\delta_{0,t}\}_{t\geq 0}$ are 
governed by the stochastic differential equations:
\begin{align*}
    d\xi_{0,t}&=-2(\xi_{0,t-}-1)dt+0.7d W_{1,t}+\int_{\mathbb{R}\setminus\{0\}}zp_1(dt,dz),\quad \xi_{0,0}=1
\end{align*}
and
\begin{align*}
    d\delta^{(1)}_{0,t}&=-3\delta^{(1)}_{0,t-}dt+0.9 dW^{(1)}_{2,t}
    +\int_{\mathbb{R}\setminus\{0\}}z p_{2,1}(dt,dz), \quad 
    \delta^{(1)}_{0,0}=0,\\
    d\delta^{(2)}_{0,t}&=-2\delta^{(2)}_{0,t-}dt+0.7dW^{(2)}_{2,t}
    +\int_{\mathbb{R}\setminus\{0\}}zp_{2,2}(dt,dz), \quad 
    \delta^{(2)}_{0,0}=0,\\
    d\delta^{(3)}_{0,t}&=-4\delta^{(3)}_{0,t-}dt+0.5dW^{(3)}_{2,t}
    +\int_{\mathbb{R}\setminus\{0\}}zp_{2,3}(dt,dz), \quad 
    \delta^{(3)}_{0,0}=0,\\
    d\delta^{(4)}_{0,t}&=-5\delta^{(4)}_{0,t-}dt+0.4dW^{(4)}_{2,t}
    +\int_{\mathbb{R}\setminus\{0\}}zp_{2,4}(dt,dz), \quad 
    \delta^{(4)}_{0,0}=0,\\
    d\delta^{(5)}_{0,t}&=-2\delta^{(5)}_{0,t-}dt+0.8dW^{(5)}_{2,t}
    +\int_{\mathbb{R}\setminus\{0\}}zp_{2,5}(dt,dz), \quad 
    \delta^{(5)}_{0,0}=0,
\end{align*}
where $\{W_{1,t}\}_{t\geq 0}$ and $\{W_{2,t}\}_{t\geq 0}$ are $1$ and $5$-dimensional standard Wiener processes, and $p_1(dt,dz)$ and $p_{2,i}(dt,dz)$ for $i=1,2,3,4,5$ are Poisson random measures on $\mathbb{R}^+\times \mathbb{R}\setminus\{0\}$ with the jump densities $f_{1}(z)=2g(z|0,5)$ and $f_{2,1}(z)=g(z|0,5)$, $f_{2,2}(z)=g(z|0,4)$, $f_{2,3}(z)=g(z|0,6)$, $f_{2,4}(z)=g(z|0,5)$, $f_{2,5}(z)=g(z|0,4)$, respectively. Here, $g(z|\mu,\sigma^2)$ is the probability density function of the normal distribution with mean $\mu\in\mathbb{R}$ and variance $\sigma^2>0$, i.e.,
\begin{align*}
    g(z|\mu,\sigma^2)=\frac{1}{\sqrt{2\pi\sigma^2}}\exp{\biggl\{-\frac{(z-\mu)^2}{2\sigma^2}\biggr\}}.
\end{align*}
The $10$-dimensional observable process $\{X_{2,0,t}\}_{t\geq 0}$ is given by the following factor model:
\begin{align*}
    X_{2,0,t}&={\bf{\Lambda}}_{2,0}\eta_{0,t}+\varepsilon_{0,t},
\end{align*}
where $\{\eta_{0,t}\}_{t\geq 0}$ and $\{\varepsilon_{0,t}\}_{t\geq 0}$ are $2$ and $10$-dimensional latent processes, respectively, and
\begin{align*}
    {\bf{\Lambda}}_{2,0}=\begin{pmatrix}
    1 & 0.2 & 0.9 & 1.2 & 0.3 & 0 & 0 & 0 & 0 & 0\\
    0 & 0 & 0 & 0 & 0 & 1 & 0.5 & 0.6 & 0.4 & 0.7
    \end{pmatrix}^{\top}.
\end{align*}
The stochastic process $\{\varepsilon_{0,t}\}_{t\geq 0}$ is 
defined by the stochastic differential equations:
\begin{align*}
    d\varepsilon^{(1)}_{0,t}&=-2\varepsilon^{(1)}_{0,t-}dt+0.4dW^{(1)}_{3,t}
    +\int_{\mathbb{R}\setminus\{0\}}z p_{3,1}(dt,dz),\quad 
    \varepsilon^{(1)}_{0,0}=0,\\
    d\varepsilon^{(2)}_{0,t}&=-3\varepsilon^{(2)}_{0,t-}dt+0.9dW^{(2)}_{3,t}
    +\int_{\mathbb{R}\setminus\{0\}}zp_{3,2}(dt,dz),\quad 
    \varepsilon^{(2)}_{0,0}=0,\\
    d\varepsilon^{(3)}_{0,t}&=-2\varepsilon^{(3)}_{0,t-}dt+0.3dW^{(3)}_{3,t}
    +\int_{\mathbb{R}\setminus\{0\}}zp_{3,3}(dt,dz),\quad 
    \varepsilon^{(3)}_{0,0}=0,\\
    d\varepsilon^{(4)}_{0,t}&=-5\varepsilon^{(4)}_{0,t-}dt+0.6
    dW^{(4)}_{3,t}+\int_{\mathbb{R}\setminus\{0\}}zp_{3,4}(dt,dz),\quad 
    \varepsilon^{(4)}_{0,0}=0,\\
    d\varepsilon^{(5)}_{0,t}&=-4\varepsilon^{(5)}_{0,t-}dt
    +0.4dW^{(5)}_{3,t}+\int_{\mathbb{R}\setminus\{0\}}zp_{3,5}(dt,dz),\quad 
    \varepsilon^{(5)}_{0,0}=0,\\
    d\varepsilon^{(6)}_{0,t}&=-2\varepsilon^{(6)}_{0,t-}dt
    +0.5dW^{(6)}_{3,t}+\int_{\mathbb{R}\setminus\{0\}}zp_{3,6}(dt,dz),\quad 
    \varepsilon^{(6)}_{0,0}=0,\\
    d\varepsilon^{(7)}_{0,t}&=-3\varepsilon^{(7)}_{0,t-}dt
    +0.8dW^{(7)}_{3,t}+\int_{\mathbb{R}\setminus\{0\}}zp_{3,7}(dt,dz),\quad 
    \varepsilon^{(7)}_{0,0}=0,\\
    d\varepsilon^{(8)}_{0,t}&=-2\varepsilon^{(8)}_{0,t-}dt
    +0.6dW^{(8)}_{3,t}+\int_{\mathbb{R}\setminus\{0\}}zp_{3,8}(dt,dz),\quad 
    \varepsilon^{(8)}_{0,0}=0,\\   
    d\varepsilon^{(9)}_{0,t}&=-5\varepsilon^{(9)}_{0,t-}dt
    +0.7dW^{(9)}_{3,t}+\int_{\mathbb{R}\setminus\{0\}}zp_{3,9}(dt,dz),\quad 
    \varepsilon^{(9)}_{0,0}=0,\\
    d\varepsilon^{(10)}_{0,t}&=-4\varepsilon^{(10)}_{0,t-}dt
    +0.3dW^{(10)}_{3,t}+\int_{\mathbb{R}\setminus\{0\}}zp_{3,10}(dt,dz),\quad 
    \varepsilon^{(10)}_{0,0}=0,
\end{align*}
where $\{W_{3,t}\}_{t\geq 0}$ is a $10$-dimensional standard Wiener process, and 
$p_{3,i}(dt,dz)$ for $i=1,\ldots,10$ are Poisson random measures on $\mathbb{R}^+\times \mathbb{R}\setminus\{0\}$ with the jump densities $f_{3,1}(z)=g(z|0,5)$, $f_{3,2}(z)=g(z|0,4)$, $f_{3,3}(z)=g(z|0,4)$, $f_{3,4}(z)=g(z|0,5)$, $f_{3,5}(z)=g(z|0,6)$, $f_{3,6}(z)=g(z|0,4)$, $f_{3,7}(z)=g(z|0,6)$, $f_{3,8}(z)=g(z|0,5)$, $f_{3,9}(z)=g(z|0,6)$  and $f_{3,10}(z)=g(z|0,5)$, respectively. Furthermore, we assume that the relationship between 
$\{\xi_{0,t}\}_{t\geq 0}$ and  $\{\eta_{0,t}\}_{t\geq 0}$ 
is defined by
\begin{align*}
    \eta_{0,t}={\bf{\Gamma}}_0\xi_{0,t}+\zeta_{0,t},
\end{align*}
where $\{\zeta_{0,t}\}_{t\geq 0}$ is a $2$-dimensional latent process and
\begin{align*}
    {\bf{\Gamma}}_0=\begin{pmatrix}
    0.7 & -0.5
    \end{pmatrix}^{\top}.
\end{align*}
The stochastic process $\{\zeta_{0,t}\}_{t\geq 0}$ is assumed to satisfy the following stochastic differential equation:
\begin{align*}
    d\zeta^{(1)}_{0,t}&=-5\zeta^{(1)}_{0,t-}dt+0.5dW^{(1)}_{4,t}
    +\int_{\mathbb{R}\setminus\{0\}}z p_{4,1}(dt,dz), \quad 
    \zeta^{(1)}_{0,0}=0,\\
    d\zeta^{(2)}_{0,t}&=-2\zeta^{(2)}_{0,t-}dt+0.8dW^{(2)}_{4,t}
    +\int_{\mathbb{R}\setminus\{0\}}zp_{4,2}(dt,dz), \quad 
    \zeta^{(2)}_{0,0}=0,
\end{align*}
where $\{W_{4,t}\}_{t\geq 0}$ is a $2$-dimensional standard Wiener process, and $p_{4,1}(dt,dz)$ and $p_{4,2}(dt,dz)$ are Poisson random measures on $\mathbb{R}^+\times \mathbb{R}\setminus\{0\}$ with the jump densities $f_{4,1}(z)=g(z|0,6)$ and $f_{4,2}(z)=g(z|0,5)$, respectively. The path diagram of the true model is shown in Figure \ref{modeltrue}.
\begin{figure}[h]
    \includegraphics[width=0.9\columnwidth]{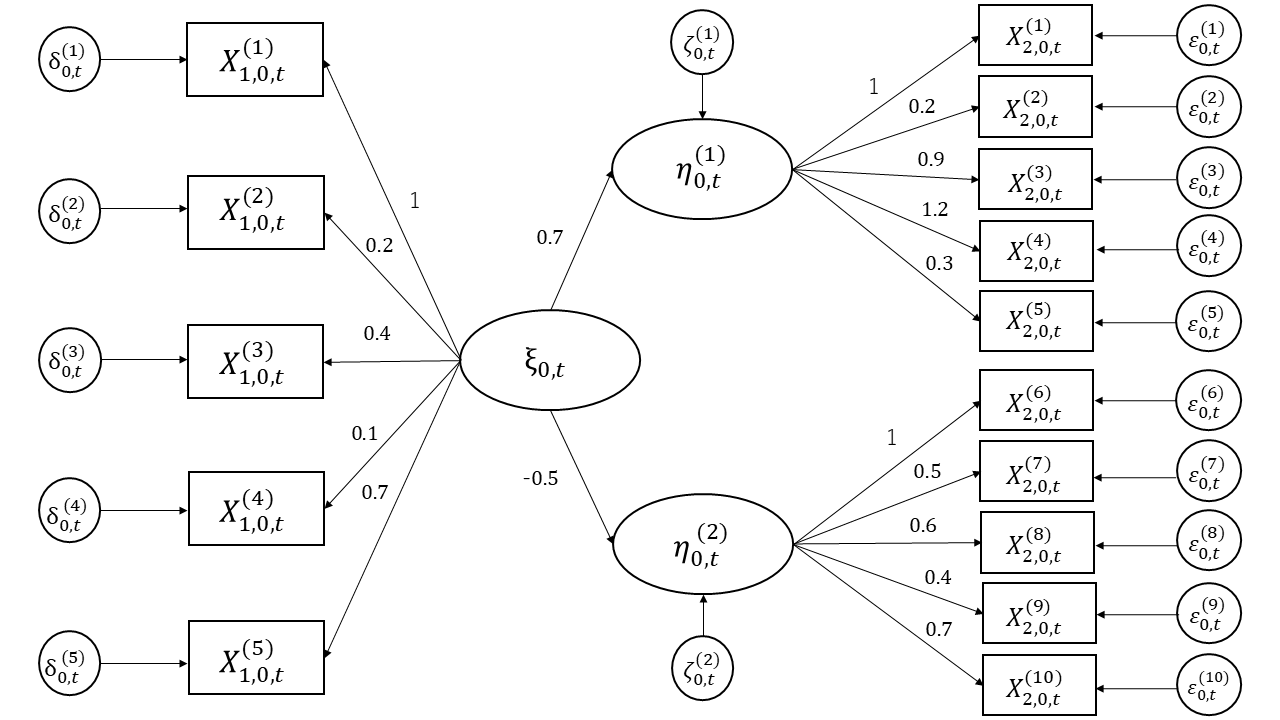}
    \caption{The path diagram of the true model at time $t$.} \label{modeltrue}
\end{figure}
\subsection{Model 1}
Set $p_1=5$, $p_2=10$, $k_1=1$, $k_2=2$ and $q_1=32$. Assume that
\begin{align*}
    &\qquad\qquad\qquad\quad
    {\bf{\Lambda}}_{1,1}^{\theta}=\begin{pmatrix}
    1 & \theta_1^{(1)} & \theta_1^{(2)} & \theta_1^{(3)} & \theta_1^{(4)}
    \end{pmatrix}^{\top},\\
    &{\bf{\Lambda}}_{2,1}^{\theta}=\begin{pmatrix}
    1 & \theta_1^{(5)} & \theta_1^{(6)} & \theta_1^{(7)} & \theta_1^{(8)} & 0 & 0 & 0 & 0 & 0\\
    0 & 0 & 0 & 0 & 0 & 1 & \theta_1^{(9)} & \theta_1^{(10)} & \theta_1^{(11)} & \theta_1^{(12)}
    \end{pmatrix}^{\top}
\end{align*}
and
\begin{align*}
    {\bf{\Gamma}}_1^{\theta}=\begin{pmatrix}
    \theta_1^{(13)},\theta_1^{(14)}
    \end{pmatrix}^{\top},\quad {\bf{\Psi}}_1^{\theta}=\mathbb{I}_2,
\end{align*}
where $\theta_1^{(i)}$ for $i=1,\ldots,14$ are not zero. In addition, we suppose that
\begin{align*}
    &\qquad
    {\bf{\Sigma}}_{\xi\xi,1}^{\theta}=\theta_1^{(15)},\quad 
    {\bf{\Sigma}}_{\delta\delta,1}^{\theta}=\Diag\bigl( \theta_1^{(16)}, \theta_1^{(17)},\theta_1^{(18)},\theta_1^{(19)},
    \theta_1^{(20)}\bigr)^{\top},\\
    &{\bf{\Sigma}}_{\varepsilon\varepsilon,1}^{\theta}=\Diag\bigl( 
    \theta_1^{(21)}, \theta_1^{(22)},\theta_1^{(23)},\theta_1^{(24)},
    \theta_1^{(25)}, \theta_1^{(26)},\theta_1^{(27)},\theta_1^{(28)},
    \theta_1^{(29)}, \theta_1^{(30)}\bigr)^{\top}
\end{align*}
and
\begin{align*}
    {\bf{\Sigma}}_{\zeta\zeta,1}^{\theta}
    =\Diag\bigl(\theta_1^{(31)}, \theta_1^{(32)}\bigr)^{\top},
\end{align*}
where $\theta_1^{(i)}$ for $i=15,\ldots,32$ are positive. This model is correctly specified since 
\begin{align*}
    {\bf{\Sigma}}_0={\bf{\Sigma}}_1(\theta_{1,0}),
\end{align*}
where 
\begin{align*}
    \theta_{1,0}&=\bigl(0.2,0.4,0.1,0.7,0.2,0.9,1.2,0.3,0.5,0.6,0.4,0.7,0.7,-0.5,0.49,0.81,0.49, \\
    &\qquad\quad 
    0.25,0.16 ,0.64,0.16,0.81,0.09,0.36,0.16,0.25,0.64,0.36
    ,0.49,0.09,0.25,0.64\bigr)^{\top}.
\end{align*}
In an analogous manner to the Appendix of Kusano and Uchida \cite{Kusano(BIC)}, it follows that
\begin{align*}
    {\bf{\Sigma}}_1(\theta_{1})={\bf{\Sigma}}_1(\theta_{1,0})\Longrightarrow\theta_1=\theta_{1,0}.
\end{align*}
In addition, since $\rank\Delta_{1,0}=32$, ${\bf{[C1]}}$ is satisfied.
The path diagram of the Model 1 is shown in Figure \ref{model1}.
\begin{figure}[h]
    \includegraphics[width=0.9\columnwidth]{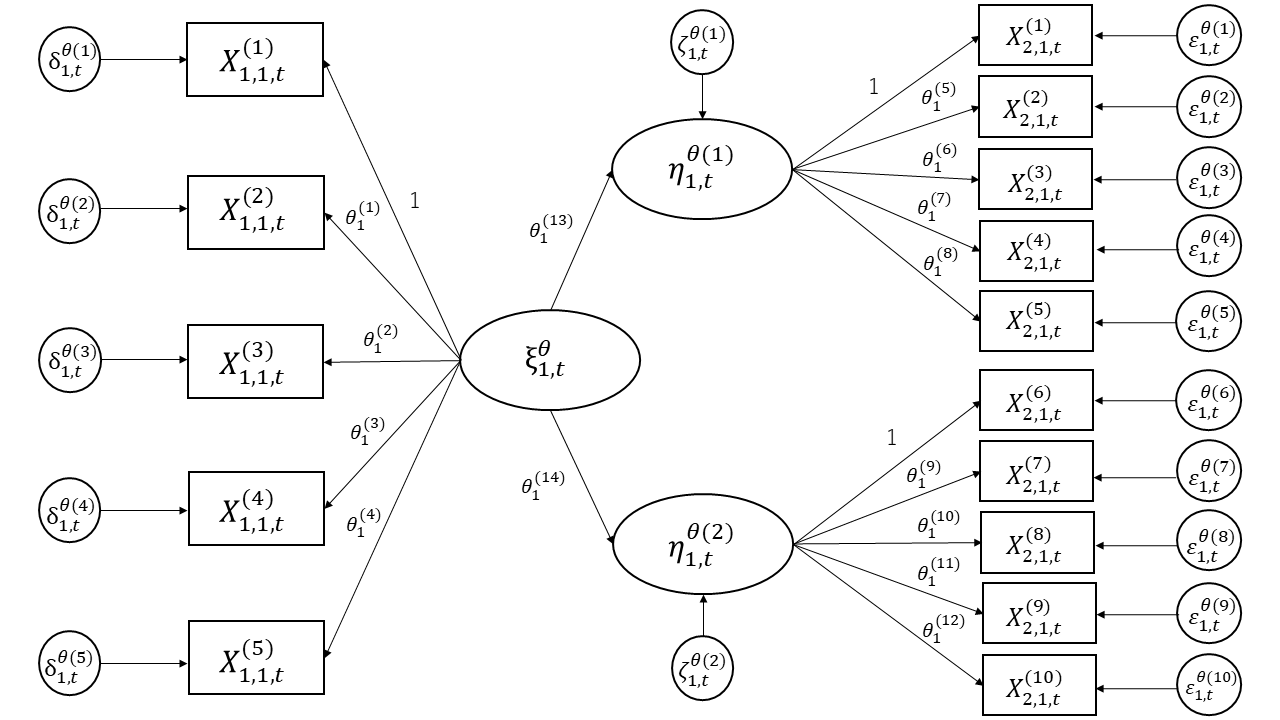}
    \caption{The path diagram of Model 1 at time $t$.} \label{model1}
\end{figure}
\subsection{Model 2}
Set $p_1=5$, $p_2=10$, $k_1=1$, $k_2=2$ and $q_2=33$. Supposed that
\begin{align*}
    &\qquad\qquad\qquad
    {\bf{\Lambda}}_{1,2}^{\theta}=\begin{pmatrix}
    1 & \theta_2^{(1)} & \theta_2^{(2)} & \theta_2^{(3)} & \theta_2^{(4)}
    \end{pmatrix}^{\top},\\
    &{\bf{\Lambda}}_{2,2}^{\theta}=\begin{pmatrix}
    1 & \theta_2^{(5)} & \theta_2^{(6)} & \theta_2^{(7)} & \theta_2^{(8)} & 0 & 0 & 0 & 0 & 0\\
    0 & 0 & 0 & 0 & \theta_2^{(9)} & 1 & \theta_2^{(10)} & \theta_2^{(11)} & \theta_2^{(12)} & \theta_2^{(13)}
    \end{pmatrix}^{\top}
\end{align*}
and
\begin{align*}
    {\bf{\Gamma}}_2^{\theta}=\begin{pmatrix}
    \theta_2^{(14)},\theta_2^{(15)}
    \end{pmatrix}^{\top},\quad {\bf{\Psi}}_2^{\theta}=\mathbb{I}_2,
\end{align*}
where $\theta_2^{(i)}$ for $i=1,\ldots, 8$ and $10,\ldots,15$ are not zero. Furthermore, we assume that
\begin{align*}
    &\qquad
    {\bf{\Sigma}}_{\xi\xi,2}^{\theta}=\theta_2^{(16)},\quad 
    {\bf{\Sigma}}_{\delta\delta,2}^{\theta}=\Diag\bigl( \theta_2^{(17)},\theta_2^{(18)},\theta_2^{(19)},
    \theta_2^{(20)},
    \theta_2^{(21)}\bigr)^{\top},\\
    &{\bf{\Sigma}}_{\varepsilon\varepsilon,2}^{\theta}=\Diag\bigl( 
    \theta_2^{(22)},\theta_2^{(23)},\theta_2^{(24)},
    \theta_2^{(25)}, \theta_2^{(26)},\theta_2^{(27)},\theta_2^{(28)},
    \theta_2^{(29)}, \theta_2^{(30)}, \theta_2^{(31)}\bigr)^{\top}
\end{align*}
and
\begin{align*}
    {\bf{\Sigma}}_{\zeta\zeta,2}^{\theta}
    =\Diag\bigl(\theta_2^{(32)}, \theta_2^{(33)}\bigr)^{\top},
\end{align*}
where $\theta_2^{(i)}$ for $i=16,\ldots,33$ are positive. This model is correctly specified since 
\begin{align*}
    {\bf{\Sigma}}_0={\bf{\Sigma}}_2(\theta_{2,0}),
\end{align*}
where 
\begin{align*}
    \theta_{2,0}&=\bigl(0.2,0.4,0.1,0.7,0.2,0.9,1.2,0.3,0,0.5, 0.6,0.4,0.7,0.7,-0.5,0.49,0.81,0.49, \\
    &\qquad\quad 
    0.25,0.16 ,0.64,0.16,0.81,0.09,0.36,0.16,0.25,0.64,0.36
    ,0.49,0.09,0.25,0.64\bigr)^{\top}.
\end{align*}
In a similar way to the Appendix of Kusano and Uchida \cite{Kusano(jumpAIC)}, we have
\begin{align*}
    {\bf{\Sigma}}_2(\theta_{2})={\bf{\Sigma}}_2(\theta_{2,0})\Longrightarrow\theta_2=\theta_{2,0}. 
\end{align*}
Hence, $\rank\Delta_{2,0}=33$ implies ${\bf{[C1]}}$. 
The path diagram of the Model 2 is shown in Figure \ref{model2}.
\begin{figure}[h]
    \includegraphics[width=0.9\columnwidth]{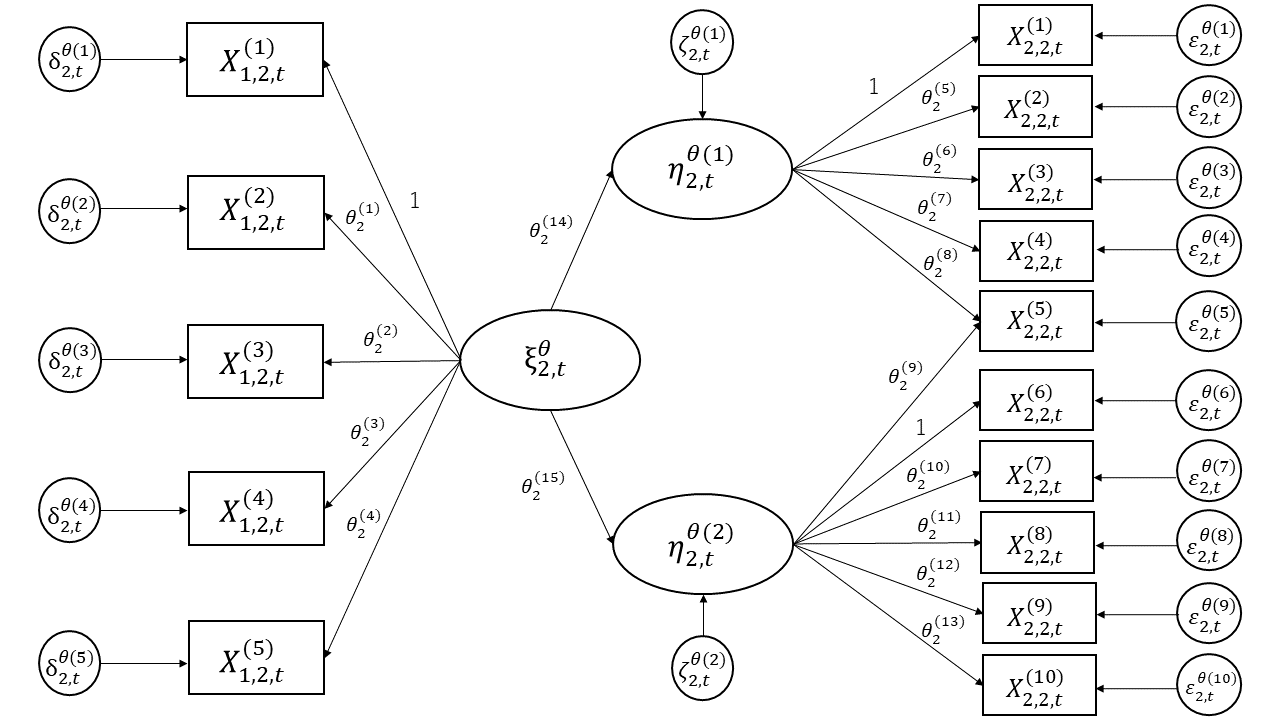}
    \caption{The path diagram of Model 2 at time $t$.} \label{model2}
\end{figure}
\subsection{Model 3}
Set $p_1=5$, $p_2=10$, $k_1=1$, $k_2=1$ and $q_3=31$. Assume that
\begin{align*}
    {\bf{\Lambda}}_{1,3}^{\theta}=\begin{pmatrix}
    1 & \theta_3^{(1)} & \theta_3^{(2)} & \theta_3^{(3)}
    & \theta_3^{(4)}
    \end{pmatrix}^{\top}
\end{align*}
and
\begin{align*}
    {\bf{\Lambda}}_{2,3}^{\theta}=\begin{pmatrix}
    1 & \theta_3^{(5)} & \theta_3^{(6)} &\theta_3^{(7)} & \theta_3^{(8)} & \theta_3^{(9)} & \theta_3^{(10)} & \theta_3^{(11)} & \theta_3^{(12)} & \theta_3^{(13)}\end{pmatrix}^{\top},\quad {\bf{\Gamma}}_3^{\theta}=
    \theta_3^{(14)},
\end{align*}
where $\theta_3^{(i)}$ for $i=1,\ldots,14$ are not zero. Suppose that
\begin{align*}
    {\bf{\Sigma}}_{\xi\xi,3}^{\theta}=\theta_3^{(15)},\quad 
    {\bf{\Sigma}}_{\delta\delta,3}^{\theta}=\Diag\bigl(\theta_3^{(16)},
    \theta_3^{(17)}, \theta_3^{(18)},\theta_3^{(19)},\theta_3^{(20)}  \bigr)^{\top}
\end{align*}
and
\begin{align*}
    {\bf{\Sigma}}_{\varepsilon\varepsilon,3}^{\theta}=\Diag\bigl(\theta_3^{(21)}, \theta_3^{(22)},\theta_3^{(23)},\theta_3^{(24)}, \theta_3^{(25)}, \theta_3^{(26)}, \theta_3^{(27)},\theta_3^{(28)}, \theta_3^{(29)}, \theta_3^{(30)}\bigr)^{\top},\quad {\bf{\Sigma}}_{\zeta\zeta,3}^{\theta}=\theta_3^{(31)},
\end{align*}
where $\theta_3^{(i)}$ for $i=15,\ldots,31$ are positive. Since
\begin{align*}
    {\bf{\Sigma}}_0\neq{\bf{\Sigma}}_3(\theta_{3})
\end{align*}
for all $\theta_3\in\Theta_3$, Model $3$ is a misspecified model. The path diagram of the Model 3 is shown in Figure \ref{model3}.
\begin{figure}[h]
    \includegraphics[width=0.9\columnwidth]{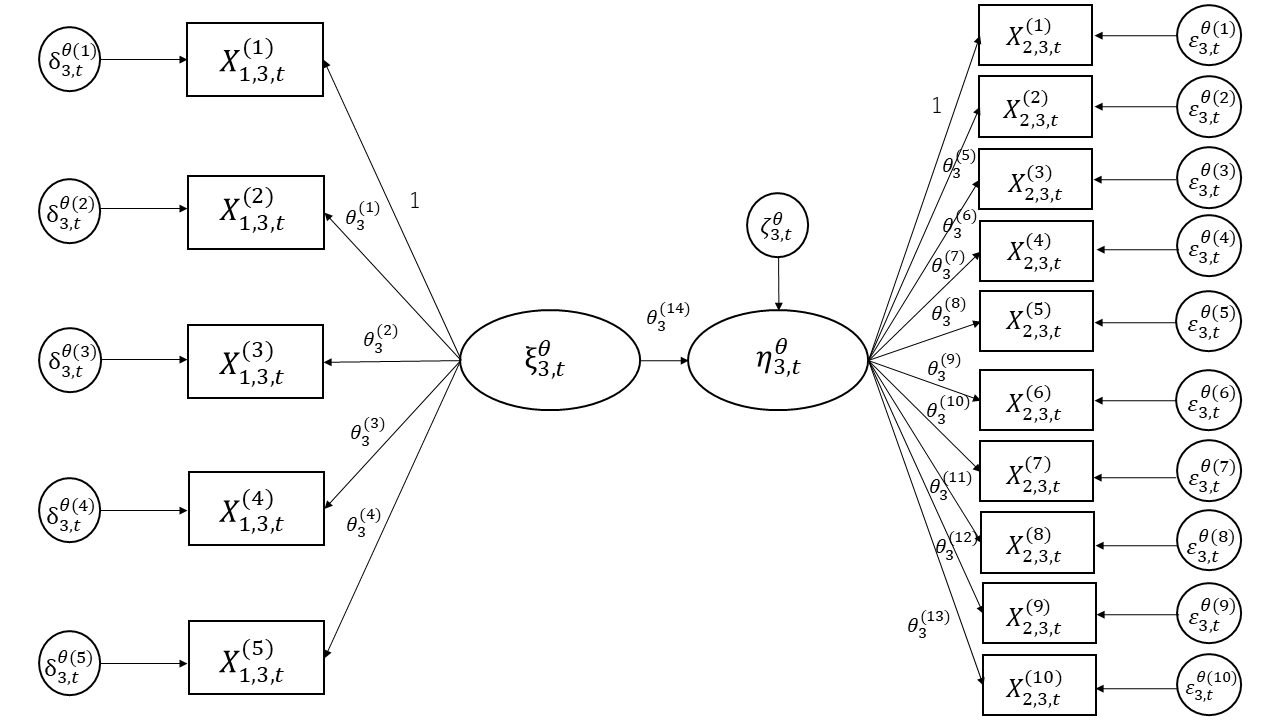}
    \caption{The path diagram of Model 3 at time $t$.} \label{model3}
\end{figure}
\subsection{Simulation results}
The simulation study was carried out using the optim() function in R with the BFGS algorithm. The optimization was initialized at the true parameter value. We conducted 10000 simulations with $T=1$, $D=10$, and $\rho=0.4$. The sample sizes were set to $n=5\times 10^4, 10^5, 5\times 10^5, 10^6$. Tables \ref{table1} and \ref{table2} show the number of times each model was selected by QAIC and QBIC. Whereas QBIC tends to select the optimal model (Model 1) as the sample size increases, QAIC often selects the overfitted model (Model 2) even when the sample size is large. This result implies that Theorems \ref{QAIC} and \ref{BICcons1} hold. Furthermore, neither QAIC nor QBIC chose the misspecified model (Model 3). This implies that Proposition 2 in Kusano and Uchida \cite{Kusano(jumpAIC)} and Theorem \ref{BICcons2} hold.
\begin{table}[h]
\centering
\begin{tabular}{cccccc}
    &  $n=5\times 10^4$ & $n=10^5$ & $n=5\times10^5$ & $n=10^6$\\\hline
    Model 1$^*$ & 8248 & 8295& 8380 & 8375 \\
    Model 2 & 1752 & 1705 & 1620 & 1625 \\
    Model 3 & 0 & 0 & 0 & 0 
\end{tabular}
    \caption{The number of times each model was selected by QAIC.}\label{table1}
\end{table}
\begin{table}
\begin{tabular}{cccccc}
    & $n=5\times 10^4$ & $n=10^5$ & $n=5\times10^5$ & $n=10^6$\\\hline
    Model 1$^*$ & 9971 & 9983 & 9995 & 9999\\
    Model 2 & 29 & 17 & 5 & 1 \\
    Model 3 & 0 & 0 & 0 & 0 
\end{tabular}
    \caption{The number of times each model was selected by QBIC.}\label{table2}
\end{table}
\section{Proofs}\label{proofs}
In this section, we may omit the model index $m$ for simplicity. Let 
\begin{align*}
    {\bf{\Gamma}}_n(\theta)=-\frac{1}{n}\partial^2_{\theta}{\bf{H}}_n(\theta),\quad {\bf{\Gamma}}(\theta)=-\partial^2_{\theta}{\bf{H}}(\theta)
\end{align*}
and
\begin{align*}
    {\bf{Y}}_n(\theta)=\frac{1}{n}{\bf{H}}_n(\theta)-\frac{1}{n}{\bf{H}}_n(\theta_0)
\end{align*}
for $\theta\in\Theta$. Set 
\begin{align*}
    \hat{\bf{Z}}_n(u)=\exp\Bigl\{{\bf{H}}_n(\hat{\theta}_n+n^{-1/2}u)-{\bf{H}}_n(\hat{\theta}_n)\Bigr\}
\end{align*}
for $u\in\hat{\mathbb{U}}_n$, where 
\begin{align*}
    \hat{\mathbb{U}}_n=\Bigl\{u\in\mathbb{R}^q\, \big|\, \hat{\theta}_n+n^{-1/2}u\in\Theta\Bigr\}.
\end{align*}
Furthermore, we define
\begin{align*}
    \tilde{\bf{Z}}_n(u)=\begin{cases}
    \hat{\bf{Z}}_n(u) & (u\in\hat{\mathbb{U}}_n\cap A_n),\\
    1 & (u\in(\hat{\mathbb{U}}_n\cap A_n)^c)
\end{cases},\quad {\bf{Z}}(u)=\exp\Bigl\{-\frac{1}{2}u^{\top}{\bf{\Gamma}}(\theta_0)u\Bigr\}
\end{align*}
for $u\in\mathbb{R}^q$, where
\begin{align*}
    A_n=\Bigl\{u\in\mathbb{R}^q\, \big|\, n^{-1/2}|u|\leq n^{-1/8}\Bigr\}.
\end{align*}
The proof of Theorem \ref{Hexpansion} is based on the approach used in the proof of Theorem 3 of Eguchi and Masuda \cite{Eguchi(2024)}.
\begin{lemma}\label{thetahat}
Under $\bf{[A1]}$-$\bf{[A4]}$ and $\bf{[C1]}$,
\begin{align*}
    \hat{\theta}_n\stackrel{p}{\longrightarrow}\theta_0
\end{align*}
and
\begin{align*}
    \sqrt{n}(\hat{\theta}_n-\theta_0)\stackrel{d}{\longrightarrow}N_q\bigl(0,{\bf{\Gamma}}(\theta_0)^{-1}\bigr)
\end{align*}
as $n\longrightarrow\infty$.
\end{lemma}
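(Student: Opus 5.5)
We prove consistency and asymptotic normality of $\hat{\theta}_n$ by the standard M-estimation route. The one model-specific ingredient is the behaviour of the truncated realized covariance
\begin{align*}
    \hat{\bf{Q}}_n=\frac{1}{nh_n}\sum_{i=1}^n(\Delta_i^nX)(\Delta_i^nX)^{\top}{\bf 1}_{\{|\Delta_i^nX|\le Dh_n^{\rho}\}},\qquad \hat{N}_n=\sum_{i=1}^n{\bf 1}_{\{|\Delta_i^nX|\le Dh_n^{\rho}\}}.
\end{align*}
In terms of these,
\begin{align*}
    \frac{1}{n}{\bf H}_n(\theta)=-\frac12\tr\bigl\{{\bf\Sigma}(\theta)^{-1}\hat{\bf{Q}}_n\bigr\}-\frac{\hat N_n}{2n}\log\det{\bf\Sigma}(\theta).
\end{align*}
Hence everything reduces to showing two facts:
\begin{align*}
    \hat{\bf{Q}}_n\stackrel{p}{\longrightarrow}{\bf\Sigma}_0,\qquad \sqrt n\,\vech(\hat{\bf{Q}}_n-{\bf\Sigma}_0)\stackrel{d}{\longrightarrow}N\bigl(0,2\mathbb{D}^{+}({\bf\Sigma}_0\otimes{\bf\Sigma}_0)\mathbb{D}^{+\top}\bigr),
\end{align*}
together with $\hat N_n/n=1+o_p(n^{-1/2})$. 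Here $\mathbb{D}^{+}$ is the Moore–Penrose inverse of the duplication matrix. These are exactly the limit theorems established for the truncated realized covariance in Kusano and Uchida \cite{Kusano(jump)} under {\bf [A1]}–{\bf [A4]} with $\rho\in[1/3,1/2)$. The constraint $\rho\ge1/3$ is what makes the contributions of misclassified increments (small jumps kept, or Brownian increments discarded) $o_p(n^{-1/2})$. I would invoke those results rather than reprove them. For the count, I would bound $n-\hat N_n$ by the number of intervals containing a jump of size above order $h_n^\rho$ plus the number of Brownian exceedances; both are $O_p(1)$ or smaller, using {\bf [A3]}–{\bf [A4]}.

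For consistency, I would observe that $\frac1n{\bf H}_n(\theta)$ depends on the data only through $(\hat{\bf Q}_n,\hat N_n/n)$, and that ${\bf\Sigma}(\theta)^{-1}$ and $\log\det{\bf\Sigma}(\theta)$ are continuous and bounded on the compact set $\bar\Theta$, since ${\bf\Sigma}(\theta)$ is positive definite there. Therefore
\begin{align*}
    \sup_{\theta\in\bar\Theta}\bigl|{\bf Y}_n(\theta)-{\bf Y}(\theta)\bigr|\le C\bigl(|\hat{\bf Q}_n-{\bf\Sigma}_0|+|\hat N_n/n-1|\bigr)\stackrel{p}{\longrightarrow}0.
\end{align*}
Combining this with {\bf [C1]}, which gives ${\bf Y}(\theta)\le-\chi|\theta-\theta_0|^2$, the usual argmax argument yields $\hat\theta_n\stackrel{p}{\to}\theta_0$.

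For asymptotic normality, since $\theta_0$ is interior and $\hat\theta_n\to\theta_0$, with probability tending to one $\partial_\theta{\bf H}_n(\hat\theta_n)=0$. A Taylor expansion then gives
\begin{align*}
    \sqrt n(\hat\theta_n-\theta_0)=\Bigl(\int_0^1{\bf\Gamma}_n(\theta_0+s(\hat\theta_n-\theta_0))ds\Bigr)^{-1}\frac{1}{\sqrt n}\partial_\theta{\bf H}_n(\theta_0).
\end{align*}
The score has components
\begin{align*}
    \frac{1}{\sqrt n}\partial_{\theta^{(j)}}{\bf H}_n(\theta_0)=\frac{\sqrt n}{2}\tr\Bigl\{{\bf\Sigma}_0^{-1}(\partial_{\theta^{(j)}}{\bf\Sigma}_0){\bf\Sigma}_0^{-1}(\hat{\bf Q}_n-{\bf\Sigma}_0)\Bigr\}+o_p(1).
\end{align*}
This is $\Delta_0^\top W_0\sqrt n\vech(\hat{\bf Q}_n-{\bf\Sigma}_0)$ with $W_0=\frac12\mathbb{D}^\top({\bf\Sigma}_0^{-1}\otimes{\bf\Sigma}_0^{-1})\mathbb{D}$, so it converges to $N(0,\Delta_0^\top W_0\Delta_0)$, because $W_0$ is the inverse of the limiting covariance above.

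The same continuity argument shows that ${\bf\Gamma}_n(\theta)\to{\bf\Gamma}(\theta)$ uniformly in probability, and a direct computation gives ${\bf\Gamma}(\theta_0)=\Delta_0^\top W_0\Delta_0$. This matrix is nonsingular: {\bf [C1]} forces the Hessian of ${\bf Y}$ at $\theta_0$ to be $\le-2\chi\mathbb{I}_q$. Slutsky's lemma then gives the limit $N_q(0,{\bf\Gamma}(\theta_0)^{-1})$.

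The main obstacle is the $\sqrt n$-level control of the truncation error, $\sqrt n\,|\hat{\bf Q}_n-{\bf Q}^{c}_n|=o_p(1)$, where ${\bf Q}^{c}_n$ is the realized covariance of the continuous part. The plan here is to rely on Kusano and Uchida \cite{Kusano(jump)}. If a self-contained argument were needed, I would split each increment into continuous and jump parts and use Burkholder-type moment bounds. I would also need $P(|\Delta_i^n X^c|>Dh_n^\rho/2)=O(h_n^L)$ for every $L$, and the small-jump estimate from {\bf [A3]}, for which $\rho\ge1/3$ suffices.
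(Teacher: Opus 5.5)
Your proposal is correct and follows essentially the same route as the paper, which simply defers to Theorem 2 of Kusano and Uchida \cite{Kusano(jump)}. Your steps are the ingredients of that standard quasi-likelihood argument: rewriting $\frac{1}{n}{\bf H}_n(\theta)$ through the truncated realized covariance and the count $N_n$ (the same decomposition the paper uses in the proof of Lemma \ref{problemma}), importing $\sqrt{n}(N_n/n-1)=o_p(1)$ and the CLT for $\sqrt{n}(\check{\bf\Sigma}_n-{\bf\Sigma}_0)$ from that reference, running the argmax argument under {\bf [C1]}, and Taylor-expanding the score with ${\bf\Gamma}(\theta_0)\geq 2\chi\mathbb{I}_q$.
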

\begin{proof}
In a similar way to Theorem 2 in Kusano and Uchida \cite{Kusano(jump)}, the result can be proven.
\end{proof}
\begin{lemma}\label{minlambda}
Under $\bf{[A1]}$-$\bf{[A4]}$ and $\bf{[C1]}$, there exists $\lambda_0>0$ such that
\begin{align*}
    {\bf{P}}\Bigl(\lambda_{min}\, {\bf{\Gamma}}(\hat{\theta}_n)\geq 4\lambda_0\Bigr)\longrightarrow 1
\end{align*}
as $n\longrightarrow\infty$.
\end{lemma}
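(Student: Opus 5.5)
Since ${\bf{\Gamma}}(\theta)=-\partial^2_\theta{\bf{H}}(\theta)$ is a deterministic function of $\theta$, the statement is a positivity property at $\theta_0$ combined with continuity and the consistency part of Lemma \ref{thetahat}. Concretely, I would define
\begin{align*}
    4\lambda_0=\frac{1}{2}\lambda_{min}\,{\bf{\Gamma}}(\theta_0)
\end{align*}
and prove three things: (i) $\lambda_{min}\,{\bf{\Gamma}}(\theta_0)>0$; (ii) $\theta\mapsto\lambda_{min}\,{\bf{\Gamma}}(\theta)$ is continuous near $\theta_0$; (iii) $\hat\theta_n\stackrel{p}{\longrightarrow}\theta_0$. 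Step (iii) is exactly Lemma \ref{thetahat}.

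For (ii), ${\bf{H}}(\theta)$ is a smooth function of ${\bf{\Sigma}}(\theta)$, because inversion and $\log\det$ are smooth on $\mathcal{M}_p^+$. In turn, ${\bf{\Sigma}}(\theta)$ is a polynomial in the entries of ${\bf{\Lambda}}^\theta_1,{\bf{\Lambda}}^\theta_2,{\bf{\Gamma}}^\theta$ and the volatility matrices, composed with ${\bf{\Psi}}^{\theta-1}$, which is smooth where ${\bf{\Psi}}^\theta$ is non-singular. Hence ${\bf{\Gamma}}(\theta)$ is continuous in a neighbourhood of $\theta_0\in\Theta$. Since the smallest eigenvalue is $1$-Lipschitz in the Frobenius norm, $\lambda_{min}\,{\bf{\Gamma}}(\theta)$ is continuous as well. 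So there is $\rho>0$ with $B(\theta_0,\rho)\subset\Theta$ and $\lambda_{min}\,{\bf{\Gamma}}(\theta)\geq 4\lambda_0$ on $B(\theta_0,\rho)$. Then
\begin{align*}
    {\bf{P}}\Bigl(\lambda_{min}\,{\bf{\Gamma}}(\hat\theta_n)\geq 4\lambda_0\Bigr)\geq {\bf{P}}\bigl(|\hat\theta_n-\theta_0|\leq\rho\bigr)\longrightarrow 1.
\end{align*}

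The main step is (i), and I would derive it from {\bf{[C1]}}. Since ${\bf{Y}}(\theta)={\bf{H}}(\theta)-{\bf{H}}(\theta_0)$ attains its maximum $0$ at the interior point $\theta_0$, we have $\partial_\theta{\bf{H}}(\theta_0)=0$. Taylor's formula then gives
\begin{align*}
    {\bf{Y}}(\theta_0+\epsilon v)=-\frac{\epsilon^2}{2}v^\top{\bf{\Gamma}}(\theta_0)v+o(\epsilon^2)
\end{align*}
for any unit vector $v$. Combining this with ${\bf{Y}}(\theta_0+\epsilon v)\leq-\chi\epsilon^2$, dividing by $\epsilon^2$ and letting $\epsilon\downarrow0$ yields $v^\top{\bf{\Gamma}}(\theta_0)v\geq2\chi$. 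Hence $\lambda_{min}\,{\bf{\Gamma}}(\theta_0)\geq2\chi>0$, and one may take $\lambda_0=\chi/4$.

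As a cross-check, ${\bf{\Gamma}}(\theta_0)=\frac{1}{2}\Delta_0^\top\mathbb{D}_p^\top({\bf{\Sigma}}_0^{-1}\otimes{\bf{\Sigma}}_0^{-1})\mathbb{D}_p\Delta_0$, where $\mathbb{D}_p$ is the duplication matrix. This is positive definite whenever $\Delta_0$ has full column rank, which is consistent with the remark following {\bf{[C1]}}.

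The only delicate point is making sure the Taylor expansion is legitimate, namely that $\theta_0$ is interior and ${\bf{H}}$ is $C^2$ near it. Both hold because $\Theta$ is open with $\theta_0\in\Theta$, and ${\bf{\Sigma}}(\theta)$ is positive definite on $\bar\Theta$.
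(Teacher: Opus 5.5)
Your proposal is correct and takes essentially the same route as the paper. Both derive positive definiteness of ${\bf{\Gamma}}(\theta_0)$ from {\bf{[C1]}} and then combine continuity of $\theta\mapsto\lambda_{min}\,{\bf{\Gamma}}(\theta)$ with the consistency in Lemma \ref{thetahat}; the paper does the last step with the continuous mapping theorem and a $5\lambda_0$ versus $4\lambda_0$ margin, while you use a fixed ball $B(\theta_0,\rho)$, which is equivalent. Your Taylor-expansion argument giving $\lambda_{min}\,{\bf{\Gamma}}(\theta_0)\geq 2\chi$ usefully fills in a step the paper only asserts.
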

\begin{proof}
Since $\bf{[C1]}$ implies that ${\bf{\Gamma}}(\theta_0)$ is positive definite, there exists $\lambda_0>0$ such that
\begin{align}
    \lambda_{min}\, {\bf{\Gamma}}(\theta_0)\geq 5\lambda_0.\label{lambdamin5}
\end{align}
By the continuous mapping theorem and Lemma \ref{thetahat}, it holds that
\begin{align}
    \lambda_{min}\, {\bf{\Gamma}}(\hat{\theta}_n)\stackrel{p}{\longrightarrow}\lambda_{min}\, {\bf{\Gamma}}(\theta_0).\label{lambdaminprob}
\end{align}
On 
\begin{align*}
    \Bigl\{\lambda_{min}\, {\bf{\Gamma}}(\hat{\theta}_n)<4\lambda_0\Bigr\}\cap\Bigl\{\bigl|\lambda_{min}\, {\bf{\Gamma}}\, (\hat{\theta}_n)-\lambda_{min}\, {\bf{\Gamma}}(\theta_0)\bigr|<\lambda_0\Bigr\},
\end{align*}
we see
\begin{align*}
    -\lambda_0+\lambda_{min}\, {\bf{\Gamma}}(\theta_0)
    <\lambda_{min}\, {\bf{\Gamma}}(\hat{\theta}_n)<4\lambda_0,
\end{align*} 
which yields
\begin{align*}
    \lambda_{min}\, {\bf{\Gamma}}(\theta_0)<5\lambda_0.
\end{align*}
Therefore, it follows from (\ref{lambdamin5}) and (\ref{lambdaminprob}) that
\begin{align*}
    &\quad\ {\bf{P}}\Bigl(\lambda_{min}\, {\bf{\Gamma}}(\hat{\theta}_n)<4\lambda_0\Bigr)\\
    &={\bf{P}}\Bigl(\Bigl\{\lambda_{min}\, {\bf{\Gamma}}(\hat{\theta}_n)<4\lambda_0\Bigr\}\cap\Bigl\{\bigl|\lambda_{min}\, {\bf{\Gamma}}\, (\hat{\theta}_n)-\lambda_{min}\, {\bf{\Gamma}}(\theta_0)\bigr|<\lambda_0\Bigr\}\Bigr)\\
    &\qquad+{\bf{P}}\Bigl(\Bigl\{\lambda_{min}\, {\bf{\Gamma}}(\hat{\theta}_n)<4\lambda_0\Bigr\}\cap\Bigl\{\bigl|\lambda_{min}\, {\bf{\Gamma}}\, (\hat{\theta}_n)-\lambda_{min}\, {\bf{\Gamma}}(\theta_0)\bigr|\geq\lambda_0\Bigr\}\Bigr)\\
    &\leq{\bf{P}}\Bigl(\lambda_{min}\, {\bf{\Gamma}}(\theta_0)<5\lambda_0\Bigr)+{\bf{P}}\Bigl(\bigl|\lambda_{min}\, {\bf{\Gamma}}\, (\hat{\theta}_n)-\lambda_{min}\, {\bf{\Gamma}}(\theta_0)\bigr|\geq\lambda_0\Bigr)\\
    &=0+{\bf{P}}\Bigl(\bigl|\lambda_{min}\, {\bf{\Gamma}}\, (\hat{\theta}_n)-\lambda_{min}\, {\bf{\Gamma}}(\theta_0)\bigr|\geq\lambda_0\Bigr)\longrightarrow 0
\end{align*}
as $n\longrightarrow\infty$, which deduces 
\begin{align*}
    {\bf{P}}\Bigl(\lambda_{min}\, {\bf{\Gamma}}(\hat{\theta}_n)\geq 4\lambda_0\Bigr)\longrightarrow 1
\end{align*}
as $n\longrightarrow\infty$. This completes the proof.
\end{proof}
\begin{lemma}\label{problemma}
Under $\bf{[A1]}$-$\bf{[A4]}$ and $\bf{[C1]}$, 
\begin{align}
    &\qquad\sup_{\theta\in\Theta}\bigl|{\bf{\Gamma}}_n(\theta)-{\bf{\Gamma}}(\theta)\bigr|\stackrel{p}{\longrightarrow}0,\label{gammaprob}\\
    &\sqrt{n}\sup_{\theta\in\Theta}\bigl|{\bf{Y}}_n(\theta)-{\bf{Y}}(\theta)\bigr|=O_p(1) \label{Yprob}
\end{align}
and
\begin{align}
    \frac{1}{n}\sup_{\theta\in\Theta}\bigl|\partial^3_{\theta}{\bf{H}}_n(\theta)\bigr|=O_p(1) \label{H3prob}
\end{align}
as $n\longrightarrow\infty$.
\end{lemma}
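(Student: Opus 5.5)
The key observation is that ${\bf{H}}_n(\theta)$ depends on the data only through two statistics that do not involve $\theta$:
\begin{align*}
    \hat{\bf{Q}}_n=\frac{1}{nh_n}\sum_{i=1}^n(\Delta_i^n X)(\Delta_i^n X)^{\top}{\bf 1}_{\{|\Delta_i^n X|\le Dh_n^\rho\}},\qquad
    \hat{N}_n=\frac{1}{n}\sum_{i=1}^n{\bf 1}_{\{|\Delta_i^n X|\le Dh_n^\rho\}}.
\end{align*}
Indeed,
\begin{align*}
    \frac{1}{n}{\bf{H}}_n(\theta)=-\frac{1}{2}\tr\bigl\{{\bf{\Sigma}}(\theta)^{-1}\hat{\bf{Q}}_n\bigr\}-\frac{1}{2}\hat{N}_n\log\det{\bf{\Sigma}}(\theta).
\end{align*}
The same form holds for the limit function, with $\hat{\bf{Q}}_n$ replaced by ${\bf{\Sigma}}(\theta_0)$ and $\hat{N}_n$ replaced by $1$. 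I therefore plan to reduce all three statements to the rates of $\hat{\bf{Q}}_n-{\bf{\Sigma}}(\theta_0)$ and $\hat{N}_n-1$, combined with uniform bounds on deterministic functions of $\theta$.

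The first step is to show
\begin{align*}
    \sqrt{n}\bigl(\hat{\bf{Q}}_n-{\bf{\Sigma}}(\theta_0)\bigr)=O_p(1),\qquad \sqrt{n}(\hat{N}_n-1)=o_p(1).
\end{align*}
The first bound is exactly the stochastic expansion behind the asymptotic normality of the truncated realized covariance. It is established in Kusano and Uchida \cite{Kusano(jump)} under [A1]--[A4] with $\rho\in[1/3,1/2)$, and it is the same ingredient used to prove Lemma \ref{thetahat}; I would quote it rather than re-derive it. For the second bound, $1-\hat{N}_n$ is $n^{-1}$ times the number of increments exceeding the threshold. Under [A3] the jump intensities are finite, so there are only $O_p(1)$ jumps on $[0,T]$. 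The continuous part satisfies
\begin{align*}
    {\bf{P}}\bigl(|\Delta_i^nX^{c}|>Dh_n^\rho/2\bigr)\le C_L h_n^{L}
\end{align*}
for every $L$, by Gaussian-type moment bounds together with $\rho<1/2$. Hence $n(1-\hat{N}_n)=O_p(1)$, which gives $\sqrt{n}(\hat{N}_n-1)=O_p(n^{-1/2})$.

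The second step handles each of the three claims by linearity in $(\hat{\bf{Q}}_n,\hat{N}_n)$. Every derivative $\partial^k_\theta$ with $k\le 3$ of ${\bf{\Sigma}}(\theta)^{-1}$ and of $\log\det{\bf{\Sigma}}(\theta)$ is continuous on the compact set $\bar{\Theta}$, because ${\bf{\Sigma}}(\theta)$ is positive definite there and is a rational function of $\theta$ with non-singular ${\bf{\Psi}}^{\theta}$. These derivatives are therefore bounded uniformly in $\theta$.
\begin{itemize}
\item For \eqref{gammaprob}: ${\bf{\Gamma}}_n(\theta)-{\bf{\Gamma}}(\theta)$ equals $\frac12\partial^2_\theta\tr\{{\bf{\Sigma}}(\theta)^{-1}(\hat{\bf{Q}}_n-{\bf{\Sigma}}(\theta_0))\}$ plus $\frac12(\hat{N}_n-1)\partial^2_\theta\log\det{\bf{\Sigma}}(\theta)$. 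Its supremum is bounded by $C(|\hat{\bf{Q}}_n-{\bf{\Sigma}}(\theta_0)|+|\hat{N}_n-1|)$, which tends to $0$ in probability.
\item For \eqref{Yprob}: ${\bf{Y}}_n(\theta)-{\bf{Y}}(\theta)$ equals $-\frac12\tr\{({\bf{\Sigma}}(\theta)^{-1}-{\bf{\Sigma}}(\theta_0)^{-1})(\hat{\bf{Q}}_n-{\bf{\Sigma}}(\theta_0))\}$ minus $\frac12(\hat{N}_n-1)\log\{\det{\bf{\Sigma}}(\theta)/\det{\bf{\Sigma}}(\theta_0)\}$. Multiplying by $\sqrt{n}$ and using the rates from the first step gives $O_p(1)$ uniformly in $\theta$.
\item For \eqref{H3prob}: $n^{-1}\partial^3_\theta{\bf{H}}_n(\theta)$ is bounded by $C(|\hat{\bf{Q}}_n|+\hat{N}_n)$, which is $O_p(1)$.
\end{itemize}

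The only real obstacle is the $\sqrt{n}$-rate for the truncated realized covariance in the first step. It requires controlling the increments in which a jump is present but small enough to fall below the threshold. It also requires controlling the drift and cross terms, which is where $\rho\ge 1/3$ and [A4] are used. I would take this bound directly from \cite{Kusano(jump)}, where it underlies the central limit theorem for $\hat{\theta}_n$. Everything else in the argument is deterministic bounding on $\bar{\Theta}$.
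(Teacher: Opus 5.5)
Your proposal is correct and takes essentially the paper's route. For \eqref{Yprob} the paper uses exactly your decomposition of ${\bf{Y}}_n(\theta)-{\bf{Y}}(\theta)$ in terms of $\check{\bf{\Sigma}}_n-{\bf{\Sigma}}(\theta_0)$ and $N_n/n-1$, with the rates $\sqrt{n}(\check{\bf{\Sigma}}_n-{\bf{\Sigma}}(\theta_0))=O_p(1)$ and $\sqrt{n}(N_n/n-1)=o_p(1)$ taken from \cite{Kusano(jump)}. The only difference is that the paper cites earlier results for \eqref{gammaprob} and \eqref{H3prob}, whereas you derive them from the same representation plus uniform bounds on the derivatives of ${\bf{\Sigma}}(\theta)^{-1}$ and $\log\det{\bf{\Sigma}}(\theta)$ over $\bar{\Theta}$; those bounds are justified by the paper's standing remark that the derivatives extend continuously to $\partial\Theta$.
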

\begin{proof}
In an analogous manner to Proposition 7 in Kusano and Uchida \cite{Kusano(jump)}, we can prove (\ref{gammaprob}). Lemma 2 in Kusano and Uchida \cite{Kusano(jumpAIC)} implies (\ref{H3prob}). Therefore, we only prove (\ref{Yprob}). Set
\begin{align*}
    N_n=\sum_{i=1}^n{\bf{1}}_{\{|\Delta_{i}^n X|\leq Dh_n^{\rho}\}},\quad \check{\bf{\Sigma}}_n=\frac{1}{nh_n}\sum_{i=1}^n(\Delta_i^n X)(\Delta_i^n X)^{\top}{\bf{1}}_{\{|\Delta_i^n X|\leq Dh_n^{\rho}\}}.
\end{align*}
Since 
\begin{align*}
    \frac{1}{n}{\bf{H}}_n(\theta)&=-\frac{1}{2}\tr\Bigl\{{\bf{\Sigma}}(\theta)^{-1}\check{\bf{\Sigma}}_n\Bigr\}-\frac{N_n}{2n}\log\det {\bf{\Sigma}}(\theta)
\end{align*}
and
\begin{align*}
    {\bf{H}}(\theta)=-\frac{1}{2}\tr\Bigl\{{\bf{\Sigma}}(\theta)^{-1}{\bf{\Sigma}}(\theta_0)\Bigr\}-\frac{1}{2}\log\det {\bf{\Sigma}}(\theta),
\end{align*}
it follows that
\begin{align*}
    &\quad\ \frac{1}{n}{\bf{H}}_n(\theta)-{\bf{H}}(\theta)\\
    &=-\frac{1}{2}
    \tr\Bigl\{{\bf{\Sigma}}(\theta)^{-1}\bigl(\check{\bf{\Sigma}}_n-{\bf{\Sigma}}(\theta_0)\bigr)\Bigr\}-\frac{1}{2}\log\det {\bf{\Sigma}}(\theta)\Bigl(\frac{N_n}{n}-1\Bigr).
\end{align*}
Consequently, we have
\begin{align*}
    {\bf{Y}}_n(\theta)-{\bf{Y}}(\theta)
    &=\Bigl(\frac{1}{n}{\bf{H}}_n(\theta)-{\bf{H}}(\theta)\Bigr)-\Bigl(\frac{1}{n}{\bf{H}}_n(\theta_0)-{\bf{H}}(\theta_0)\Bigr)\\
    &=-\frac{1}{2}
    \tr\Bigl\{{\bf{\Sigma}}(\theta)^{-1}\bigl(\check{\bf{\Sigma}}_n-{\bf{\Sigma}}(\theta_0)\bigr)\Bigr\}\\
    &\quad-\frac{1}{2}\log\det {\bf{\Sigma}}(\theta)\Bigl(\frac{N_n}{n}-1\Bigr)\\
    &\quad+\frac{1}{2}
    \tr\Bigl\{{\bf{\Sigma}}(\theta_0)^{-1}\bigl(\check{\bf{\Sigma}}_n-{\bf{\Sigma}}(\theta_0)\bigr)\Bigr\}+\frac{1}{2}\log\det {\bf{\Sigma}}(\theta_0)\Bigl(\frac{N_n}{n}-1\Bigr).
\end{align*}
By the proofs of Proposition 1 and Theorem 1 in Kusano and Uchida \cite{Kusano(jump)}, it is shown that
\begin{align*}
    \sqrt{n}\Bigl(\frac{N_n}{n}-1\Bigr)=o_p(1),\quad \sqrt{n}\bigl(\check{\bf{\Sigma}}_n-{\bf{\Sigma}}(\theta_0)\bigr)=O_p(1),
\end{align*}
so that 
\begin{align*}
    \sqrt{n}\sup_{\theta\in\Theta}\bigl|{\bf{Y}}_n(\theta)-{\bf{Y}}(\theta)\bigr|
    &\leq \frac{1}{2}\sup_{\theta\in\Theta}\bigl|{\bf{\Sigma}}(\theta)^{-1}\bigr|
    \Bigl|\sqrt{n}\bigl(\check{\bf{\Sigma}}_n-{\bf{\Sigma}}(\theta_0)\bigr)\Bigr|\\
    &\quad+\frac{1}{2}\sup_{\theta\in\Theta}\bigl|\log\det {\bf{\Sigma}}(\theta)\bigr|\Bigl|\sqrt{n}\Bigl(\frac{N_n}{n}-1\Bigr)\Bigr|\\
    &\quad+\frac{1}{2}\bigl|{\bf{\Sigma}}(\theta_0)^{-1}\bigr|
    \Bigl|\sqrt{n}\bigl(\check{\bf{\Sigma}}_n-{\bf{\Sigma}}(\theta_0)\bigr)\Bigr|\\
    &\quad+\frac{1}{2}\bigl|\log\det {\bf{\Sigma}}(\theta_0)\bigr|\Bigl|\sqrt{n}\Bigl(\frac{N_n}{n}-1\Bigr)\Bigr|=O_p(1),
\end{align*}
which implies (\ref{Yprob}). This completes the proof.
\end{proof}
\begin{lemma}\label{pilemma}
Under $\bf{[A1]}$-$\bf{[A4]}$, $\bf{[B]}$ and $\bf{[C1]}$,
\begin{align*}
    \sup_{u\in\hat{\mathbb{U}}_n\cap A_n}\bigl|\pi(\hat{\theta}_n+n^{-1/2}u)-\pi(\hat{\theta}_n)\bigr|
    \stackrel{p}{\longrightarrow}0
\end{align*}
as $n\longrightarrow\infty$.
\end{lemma}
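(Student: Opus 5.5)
The argument rests on uniform continuity of $\pi$ on the compact set $\bar{\Theta}$. By {\bf{[B]}}, $\pi$ is continuous on $\bar{\Theta}$. Since $\Theta$ is bounded, $\bar{\Theta}$ is compact, so $\pi$ is uniformly continuous there. Hence for every $\epsilon>0$ there is $\delta>0$ such that
\begin{align*}
    \theta,\theta'\in\bar{\Theta},\ |\theta-\theta'|\leq\delta\ \Longrightarrow\ |\pi(\theta)-\pi(\theta')|<\epsilon .
\end{align*}
This is the only analytic input needed. Note that $\hat{\theta}_n\in\bar{\Theta}$ by definition of the estimator. Also, for $u\in\hat{\mathbb{U}}_n$ the point $\hat{\theta}_n+n^{-1/2}u$ lies in $\Theta\subset\bar{\Theta}$, so both arguments of $\pi$ are in the domain where uniform continuity applies.

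Next we control the distance between the two arguments. For $u\in A_n$ we have $|(\hat{\theta}_n+n^{-1/2}u)-\hat{\theta}_n|=n^{-1/2}|u|\leq n^{-1/8}$. This bound is deterministic and uniform in $u\in\hat{\mathbb{U}}_n\cap A_n$.

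Now fix $\epsilon>0$ and take the corresponding $\delta$. Choose $N$ with $N^{-1/8}\leq\delta$. Then for all $n\geq N$, on the whole probability space,
\begin{align*}
    \sup_{u\in\hat{\mathbb{U}}_n\cap A_n}\bigl|\pi(\hat{\theta}_n+n^{-1/2}u)-\pi(\hat{\theta}_n)\bigr|\leq\epsilon .
\end{align*}
If $\hat{\mathbb{U}}_n\cap A_n$ is empty, the supremum is trivially $0$ under the usual convention. So the supremum converges to $0$ surely, and a fortiori in probability.

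Lemma \ref{thetahat} and {\bf{[C1]}} are not actually needed for this step; they are only listed as standing hypotheses. The one point deserving care is that uniform continuity requires compactness of $\bar{\Theta}$, which follows from the boundedness of $\Theta_m$. If one only had continuity on $\Theta$, one would instead use the consistency $\hat{\theta}_n\stackrel{p}{\longrightarrow}\theta_0$ from Lemma \ref{thetahat} together with continuity of $\pi$ at $\theta_0$, working on a small ball $B(\theta_0,\rho)\subset\Theta$ with probability tending to one.
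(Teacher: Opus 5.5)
Your argument is correct, and it takes a different route from the paper's. The paper never uses compactness. It bounds the quantity by $\sup_{u}|\pi(\hat{\theta}_n+n^{-1/2}u)-\pi(\theta_0)|+|\pi(\theta_0)-\pi(\hat{\theta}_n)|$ and treats the second term by Lemma \ref{thetahat} and the continuous mapping theorem. For the first term it uses only continuity of $\pi$ at $\theta_0$ together with tightness of $\hat{u}_n=\sqrt{n}(\hat{\theta}_n-\theta_0)$: on the event $\{|\hat{u}_n|\leq M_0\}$, which has probability at least $1-\varepsilon$, every $\hat{\theta}_n+n^{-1/2}u$ with $u\in A_n$ lies within $n^{-1/2}M_0+n^{-1/8}\leq\delta$ of $\theta_0$.

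You instead use that {\bf{[B]}} gives continuity on the compact set $\bar{\Theta}$. Heine--Cantor plus the deterministic bound $n^{-1/2}|u|\leq n^{-1/8}$ then gives the conclusion surely for all $n\geq N$. This is shorter, and it shows that {\bf{[A1]}}--{\bf{[A4]}}, {\bf{[C1]}} and Lemma \ref{thetahat} are not needed for this lemma.

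What the paper's route buys is locality. It needs only continuity of $\pi$ at $\theta_0$ and no boundedness of $\Theta$, so it would survive a non-compact parameter space or a prior that is merely continuous near $\theta_0$. The cost is invoking the estimator's consistency; consistency alone would in fact suffice there, so the $\sqrt{n}$-tightness the paper uses is more than needed.

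Your closing caveat about an empty index set is harmless but vacuous. Since $\hat{\theta}_n\in\bar{\Theta}$, the open ball of radius $n^{-1/8}$ around $\hat{\theta}_n$ meets $\Theta$, so $\hat{\mathbb{U}}_n\cap A_n\neq\emptyset$ always.
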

\begin{proof}
It follows from {\bf{[B]}}, Lemma \ref{thetahat} and the continuous mapping theorem that 
\begin{align*}
    \pi(\hat{\theta}_n)\stackrel{p}{\longrightarrow}\pi(\theta_0).
\end{align*}
Since
\begin{align*}
    &\quad\ \sup_{u\in\hat{\mathbb{U}}_n\cap A_n}\bigl|\pi(\hat{\theta}_n+n^{-1/2}u)-\pi(\hat{\theta}_n)\bigr|\\
    &\leq\sup_{u\in\hat{\mathbb{U}}_n\cap A_n}\bigl|\pi(\hat{\theta}_n+n^{-1/2}u)-\pi(\theta_0)\bigr|+\bigl|\pi(\theta_0)-\pi(\hat{\theta}_n)\bigr|,
\end{align*}
it is sufficient to show
\begin{align}
    \sup_{u\in\hat{\mathbb{U}}_n\cap A_n}\bigl|\pi(\hat{\theta}_n+n^{-1/2}u)-\pi(\theta_0)\bigr|
    \stackrel{p}{\longrightarrow}0. \label{pihat}
\end{align}
Fix a number $\varepsilon>0$ arbitrarily. Since $\pi(\theta)$ is continuous, 
there exists a constant $\delta>0$ such that
\begin{align*}
    |\theta-\theta_0|\leq\delta\Longrightarrow|\pi(\theta)-\pi(\theta_0)|\leq\varepsilon,
\end{align*}
which implies
\begin{align}
    \sup_{|\theta-\theta_0|\leq\delta}|\pi(\theta)-\pi(\theta_0)|\leq\varepsilon. \label{supcon}
\end{align}
It holds from Lemma \ref{thetahat} that 
\begin{align*}
    \hat{u}_n=\sqrt{n}(\hat{\theta}_n-\theta_0)=O_p(1), 
\end{align*}
and hence there exist $M^{(1)}_0>0$ and $n_0^{(1)}\in\mathbb{N}$ such that 
\begin{align}
    {\bf{P}}\Bigl(|\hat{u}_n|>M^{(1)}_0\Bigr)<\varepsilon \label{PM0}
\end{align}
for all $n\geq n_0^{(1)}$. On 
\begin{align*}
    \Bigl\{|\hat{u}_n|\leq M^{(1)}_0\Bigr\},
\end{align*}
we see
\begin{align*}
    \bigl|\hat{\theta}_n+n^{-1/2}u-\theta_0\bigr|&=n^{-1/2}\bigl|\hat{u}_n+u\bigr|\\
    &\leq n^{-1/2}|\hat{u}_n|+n^{-1/2}|u|\leq n^{-1/2}M^{(1)}_0+n^{-1/8}
\end{align*}
for all $u\in A_n$. Furthermore, there exists $n_0^{(2)}\in\mathbb{N}$ such that 
\begin{align*}
    n^{-1/2}M^{(1)}_0+n^{-1/8}\leq\delta
\end{align*}
for all $n\geq n_0^{(2)}$ since $n^{-1/2}M^{(1)}_0+n^{-1/8}\longrightarrow 0$ as $n\longrightarrow\infty$. Hence, on
\begin{align*}
    \Bigl\{|\hat{u}_n|\leq M^{(1)}_0\Bigr\}
\end{align*}
it follows that
\begin{align*}
    \bigl|\hat{\theta}_n+n^{-1/2}u-\theta_0\bigr|\leq\delta
\end{align*}
for all $n\geq n_0^{(2)}$ and all $u\in A_n$, so that
\begin{align}
    \sup_{u\in\hat{\mathbb{U}}_n\cap A_n}\bigl|\pi(\hat{\theta}_n+n^{-1/2}u)-\pi(\theta_0)\bigr|\leq\sup_{|\theta-\theta_0|\leq\delta}\bigl|\pi(\theta)-\pi(\theta_0)\bigr|\label{supine}
\end{align}
for all $n\geq n_0^{(2)}$. Therefore, (\ref{supcon}), (\ref{PM0}) and (\ref{supine}) show
\begin{align*}
    &\quad\ {\bf{P}}\biggl(\sup_{u\in\hat{\mathbb{U}}_n\cap A_n}\bigl|\pi(\hat{\theta}_n+n^{-1/2}u)-\pi(\theta_0)\bigr|>\varepsilon\biggr)\\
    &={\bf{P}}\biggl(\biggl\{\sup_{u\in\hat{\mathbb{U}}_n\cap A_n}\bigl|\pi(\hat{\theta}_n+n^{-1/2}u)-\pi(\theta_0)\bigr|>\varepsilon\biggr\}\cap \Bigl\{|\hat{u}_n|\leq M^{(1)}_0\Bigr\}\biggr)\\
    &\qquad\qquad+{\bf{P}}\biggl(\biggl\{\sup_{u\in\hat{\mathbb{U}}_n\cap A_n}\bigl|\pi(\hat{\theta}_n+n^{-1/2}u)-\pi(\theta_0)\bigr|>\varepsilon\biggr\}\cap \Bigl\{|\hat{u}_n|> M^{(1)}_0\Bigr\}\biggr)\\
    &\leq{\bf{P}}\biggl(\biggl\{\sup_{|\theta-\theta_0|\leq\delta}\bigl|\pi(\theta)-\pi(\theta_0)\bigr|>\varepsilon\biggr\}\cap\Bigl\{|\hat{u}_n|\leq M^{(1)}_0\Bigr\}\biggr)+{\bf{P}}\Bigl(|\hat{u}_n|> M^{(1)}_0\Bigr)\\
    &\leq{\bf{P}}\biggl(\sup_{|\theta-\theta_0|\leq\delta}\bigl|\pi(\theta)-\pi(\theta_0)\bigr|>\varepsilon\biggr)+{\bf{P}}\Bigl(|\hat{u}_n|> M^{(1)}_0\Bigr)\\
    &=0+{\bf{P}}\Bigl(|\hat{u}_n|> M^{(1)}_0\Bigr)<\varepsilon
\end{align*}
for all $n\geq\max\{n_0^{(1)},n_0^{(2)}\}$, which yields (\ref{pihat}). This completes the proof.
\end{proof}
\begin{proposition}\label{Anprop1}
Under $\bf{[A1]}$-$\bf{[A4]}$, $\bf{[B]}$ and $\bf{[C1]}$,
\begin{align*}
    \int_{\hat{\mathbb{U}}_n\cap A_n}\hat{{\bf{Z}}}_n(u)\bigl\{\pi(\hat{\theta}_n+n^{-1/2}u)-\pi(\hat{\theta}_n)\bigr\}du\stackrel{p}{\longrightarrow}0
\end{align*}
as $n\longrightarrow\infty$.
\end{proposition}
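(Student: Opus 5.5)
The plan is to bound the integral by
\begin{align*}
    \sup_{u\in\hat{\mathbb{U}}_n\cap A_n}\bigl|\pi(\hat{\theta}_n+n^{-1/2}u)-\pi(\hat{\theta}_n)\bigr|\int_{\hat{\mathbb{U}}_n\cap A_n}\hat{\bf{Z}}_n(u)du .
\end{align*}
Lemma \ref{pilemma} makes the first factor $o_p(1)$. It therefore suffices to show that the second factor is $O_p(1)$. To do this I will dominate $\hat{\bf{Z}}_n(u)$ on $\hat{\mathbb{U}}_n\cap A_n$ by a Gaussian kernel $\exp\{-\lambda_0|u|^2\}$, with probability tending to one.

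For the domination I will Taylor expand ${\bf{H}}_n$ around $\hat{\theta}_n$. We have $\hat{\theta}_n\in\Theta$ with probability tending to one, because $\hat{\theta}_n\to\theta_0$ in probability and $\theta_0$ is interior. On that event $\partial_\theta{\bf{H}}_n(\hat{\theta}_n)=0$, since $\hat{\theta}_n$ is an interior maximizer. The segment from $\hat{\theta}_n$ to $\hat{\theta}_n+n^{-1/2}u$ stays in $\Theta$ by convexity. Hence
\begin{align*}
    \log\hat{\bf{Z}}_n(u)=-\frac{1}{2}u^{\top}{\bf{\Gamma}}_n(\hat{\theta}_n)u+R_n(u),\quad |R_n(u)|\leq \frac{C}{6}\,\frac{1}{n}\sup_{\theta\in\Theta}\bigl|\partial^3_\theta{\bf{H}}_n(\theta)\bigr|\, n^{-1/2}|u|^3 .
\end{align*}
Here $C$ is a dimensional constant. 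For $u\in A_n$ we have $n^{-1/2}|u|^3\leq n^{-1/8}|u|^2$, so by (\ref{H3prob}) we get $\sup_{u\in A_n}|R_n(u)|/|u|^2=O_p(n^{-1/8})=o_p(1)$.

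Next I combine (\ref{gammaprob}) with the continuity of ${\bf{\Gamma}}$ and Lemma \ref{thetahat}. This gives $|{\bf{\Gamma}}_n(\hat{\theta}_n)-{\bf{\Gamma}}(\hat{\theta}_n)|\to 0$ in probability. By Lemma \ref{minlambda}, with probability tending to one we then have $\lambda_{min}{\bf{\Gamma}}_n(\hat{\theta}_n)\geq 3\lambda_0$ and $|R_n(u)|\leq\frac{1}{2}\lambda_0|u|^2$ on $A_n$. On this event,
\begin{align*}
    \hat{\bf{Z}}_n(u)\leq\exp\{-\lambda_0|u|^2\}\quad\mbox{for all } u\in\hat{\mathbb{U}}_n\cap A_n ,
\end{align*}
and consequently
\begin{align*}
    \int_{\hat{\mathbb{U}}_n\cap A_n}\hat{\bf{Z}}_n(u)du\leq\int_{\mathbb{R}^q}e^{-\lambda_0|u|^2}du<\infty .
\end{align*}
So the integral is $O_p(1)$. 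Multiplying by the $o_p(1)$ supremum then yields the claim. To handle the complementary event of vanishing probability, I will split the probability $\bf{P}(|\cdot|>\varepsilon)$ over the good event and its complement.

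I expect the main obstacle to be the uniform control of the cubic remainder, together with justifying the first-order condition at $\hat{\theta}_n$. This is exactly why the restriction to $A_n$ (with $|u|\leq n^{3/8}$) is built in: it makes the cubic term negligible relative to the quadratic one. I also need the boundedness of $\Theta$, so that the supremum in (\ref{H3prob}) covers the whole segment.
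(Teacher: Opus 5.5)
Your proposal is correct and follows essentially the same route as the paper. On a high-probability event where $\hat{\theta}_n\in\Theta$, $\lambda_{min}\,{\bf{\Gamma}}(\hat{\theta}_n)\geq 4\lambda_0$, $|{\bf{\Gamma}}_n(\hat{\theta}_n)-{\bf{\Gamma}}(\hat{\theta}_n)|\leq\lambda_0$ and the cubic remainder is absorbed via $A_n$, the paper Taylor-expands to get $\hat{\bf{Z}}_n(u)\leq\exp\{-\lambda_0|u|^2\}$ and multiplies by the supremum controlled in Lemma \ref{pilemma}. Two small remarks: the paper uses $\partial_\theta{\bf{H}}_n(\hat{\theta}_n)=0$ on $\{\hat{\theta}_n\in\Theta\}$ without stating it, whereas you justify it explicitly; and it is convexity, not boundedness, of $\Theta$ that keeps the Taylor segment inside the set over which the supremum in (\ref{H3prob}) is taken.
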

\begin{proof}
Since the true parameter $\theta_0$ belongs to the open set $\Theta$, Lemma \ref{thetahat} implies
\begin{align*}
    {\bf{P}}\Bigl(\hat{\theta}_n\in\Theta\Bigr)\longrightarrow 1
\end{align*}
as $n\longrightarrow\infty$. From Lemma \ref{minlambda}, there exists $\lambda_0>0$ such that 
\begin{align*}
    {\bf{P}}\Bigl(\lambda_{min}\, {\bf{\Gamma}}(\hat{\theta}_n)\geq 4\lambda_0\Bigr)\longrightarrow 1
\end{align*}
as $n\longrightarrow\infty$. Moreover, it follows from Lemma \ref{problemma} that
\begin{align*}
    \bigl|{\bf{\Gamma}}_n(\hat{\theta}_n)-{\bf{\Gamma}}(\hat{\theta}_n)\bigr|\leq\sup_{\theta\in\Theta}\bigl|{\bf{\Gamma}}_n(\theta)-{\bf{\Gamma}}(\theta)\bigr|\stackrel{p}{\longrightarrow}0
\end{align*}
and
\begin{align*}
    n^{-1/8}\times\frac{1}{n}\sup_{\theta\in\Theta}\bigl|\partial_{\theta}^3{\bf{H}}_n(\theta)\bigr|\stackrel{p}{\longrightarrow}0,
\end{align*}
which implies
\begin{align}
    {\bf{P}}\bigl(G_{1,n}\bigr)\longrightarrow 1 \label{Gprob}
\end{align}
as $n\longrightarrow\infty$, where 
\begin{align*}
    G_{1,n}&=\Bigl\{\hat{\theta}_n\in \Theta\Bigr\}\cap\Bigl\{\lambda_{min}\, {\bf{\Gamma}}(\hat{\theta}_n)\geq 4\lambda_0\Bigr\}\\
    &\qquad\cap\Bigl\{\bigl|{\bf{\Gamma}}_n(\hat{\theta}_n)-{\bf{\Gamma}}(\hat{\theta}_n)\bigr|\leq \lambda_0\Bigr\}\cap\biggl\{n^{-1/8}\times\frac{1}{n}\sup_{\theta\in\Theta}\bigl|\partial_{\theta}^3{\bf{H}}_n(\theta)\bigr|\leq 3\lambda_0\biggr\}.
\end{align*}
Using Taylor’s theorem, on $G_{1,n}$, we have
\begin{align}
\begin{split}
    \log\hat{\bf{Z}}_n(u)
    &={\bf{H}}_n(\hat{\theta}_n+n^{-1/2}u)-{\bf{H}}_n(\hat{\theta}_n)\\
    &=n^{-1/2}\partial_{\theta}{\bf{H}}_n(\hat{\theta}_n)^{\top}u-\frac{1}{2}u^{\top}{\bf{\Gamma}}_n(\hat{\theta}_n)u+{\bf{R}}_{n}(u)\\
    &=-\frac{1}{2}u^{\top}{\bf{\Gamma}}(\hat{\theta}_n)u-\frac{1}{2}u^{\top}\bigl\{{\bf{\Gamma}}_n(\hat{\theta}_n)-{\bf{\Gamma}}(\hat{\theta}_n)\bigr\}u+{\bf{R}}_{n}(u)\label{ZhatTaylor}
\end{split}
\end{align}
for all $u\in\hat{\mathbb{U}}_n\cap A_n$, where 
\begin{align*}
    \check{\theta}_{n,\lambda,u}=\hat{\theta}_n+\lambda  n^{-1/2}u
\end{align*}
for $\lambda\in [0,1]$ and 
\begin{align*}
    {\bf{R}}_{n}(u)=\sum_{i=1}^q\sum_{j=1}^q\sum_{k=1}^q\frac{1}{2n^{3/2}}\int_0^1(1-\lambda)^2\partial_{\theta^{(i)}}\partial_{\theta^{(j)}}\partial_{\theta^{(k)}}{\bf{H}}_n(\check{\theta}_{n,\lambda,u})d\lambda u^{(i)}u^{(j)}u^{(k)}.
\end{align*}
On $G_{1,n}$, it follows that
\begin{align*}
    \hat{\theta}_n\in\Theta,\quad \hat{\theta}_n+n^{-1/2}u\in\Theta
\end{align*}
for all $u\in\hat{\mathbb{U}}_n\cap A_n$, and $\Theta$ is convex, so that (\ref{ZhatTaylor}) is well-defined. Since 
\begin{align*}
    4\lambda_0|u|^2\leq\lambda_{min}\, {\bf{\Gamma}}(\hat{\theta}_n)|u|^2\leq
    u^{\top}{\bf{\Gamma}}(\hat{\theta}_n)u
\end{align*}
for all $u\in\mathbb{R}^q$ on $G_{1,n}$, we obtain
\begin{align}
    -\frac{1}{2}u^{\top}{\bf{\Gamma}}(\hat{\theta}_n)u\leq -2\lambda_0|u|^2 \label{UAP-2-1}
\end{align}
and
\begin{align}
\begin{split}
    -\frac{1}{2}u^{\top}\bigl\{{\bf{\Gamma}}_n(\hat{\theta}_n)-{\bf{\Gamma}}(\hat{\theta}_n)\bigr\}u&\leq \frac{1}{2}\bigl|u^{\top}\bigl\{{\bf{\Gamma}}_n(\hat{\theta}_n)-{\bf{\Gamma}}(\hat{\theta}_n)\bigr\}u\bigr|\\
    &\leq\frac{1}{2}\bigl|{\bf{\Gamma}}_n(\hat{\theta}_n)-{\bf{\Gamma}}(\hat{\theta}_n)\bigr||u|^2\leq \frac{\lambda_0}{2}|u|^2
\end{split}\label{UAP-2-2}
\end{align}
for all $u\in\mathbb{R}^q$ on $G_{1,n}$. In addition, on $G_{1,n}$ one has
\begin{align*}
    \bigl|{\bf{R}}_{n}(u)\bigr|
    &\leq\sum_{i=1}^q\sum_{j=1}^q\sum_{k=1}^q\frac{1}{2n^{3/2}}\int_0^1(1-\lambda)^2\bigl|\partial_{\theta^{(i)}}\partial_{\theta^{(j)}}\partial_{\theta^{(k)}}{\bf{H}}_n(\check{\theta}_{n,\lambda,u})\bigr|d\lambda|u^{(i)}||u^{(j)}||u^{(k)}|\\
    &\leq\frac{1}{6n^{3/2}}\sum_{i=1}^q\sum_{j=1}^q\sum_{k=1}^q\sup_{\theta\in\Theta}\bigl|\partial_{\theta^{(i)}}\partial_{\theta^{(j)}}\partial_{\theta^{(k)}}{\bf{H}}_n(\theta)\bigr||u^{(i)}||u^{(j)}||u^{(k)}|\\
    &\leq\frac{1}{6n^{3/2}}
    \Biggl(\sum_{i=1}^q\sum_{j=1}^q\sum_{k=1}^q
    \sup_{\theta\in\Theta}\bigl|\partial_{\theta^{(i)}}\partial_{\theta^{(j)}}
    \partial_{\theta^{(k)}}{\bf{H}}_n(\theta)\bigr|^2\Biggr)^{1/2}\\
    &\qquad\qquad\qquad\qquad\qquad
    \times\Biggl(\sum_{i=1}^q\sum_{j=1}^q\sum_{k=1}^q|u^{(i)}|^2|u^{(j)}|^2|u^{(k)}|^2\Biggr)^{1/2}\\
    &=\frac{1}{6n^{3/2}}\sup_{\theta\in\Theta}
    \bigl|\partial^3_{\theta}{\bf{H}}_n(\theta)\bigr|
    \Biggl(\sum_{i=1}^q|u^{(i)}|^2\Biggr)^{1/2}
    \Biggl(\sum_{j=1}^q|u^{(j)}|^2\Biggr)^{1/2}
    \Biggl(\sum_{k=1}^q|u^{(k)}|^2\Biggr)^{1/2}\\
    &=\frac{1}{6}\times\biggl(\frac{1}{n}\sup_{\theta\in\Theta}
    \bigl|\partial^3_{\theta}{\bf{H}}_n(\theta)\bigr|\biggr)\times n^{-1/2}|u|\times|u|^2\\
    &\leq\frac{1}{6}\times3n^{1/8}\lambda_0\times n^{-1/8}\times |u|^2=\frac{\lambda_0}{2}|u|^2
\end{align*}
for all $u\in\hat{\mathbb{U}}_n\cap A_n$, so that 
\begin{align}
    \bigl|{\bf{R}}_{n}(u)\bigr|\leq \frac{\lambda_0}{2}|u|^2 \label{UAP-2-3}
\end{align}
for all $u\in\hat{\mathbb{U}}_n\cap A_n$. Hence, on $G_{1,n}$, it holds from (\ref{ZhatTaylor})-(\ref{UAP-2-3}) that 
\begin{align*}
    \log\hat{\bf{Z}}_n(u)\leq-2\lambda_0|u|^2+\frac{\lambda_0}{2}|u|^2+\frac{\lambda_0}{2}|u|^2=-\lambda_0|u|^2,
\end{align*}
for all $u\in\hat{\mathbb{U}}_n\cap A_n$, which yields
\begin{align}
    \hat{\bf{Z}}_n(u)\leq\exp\Bigl\{-\lambda_0|u|^2\Bigr\} \label{Zhatine}
\end{align}
for all $u\in\hat{\mathbb{U}}_n\cap A_n$. Since
\begin{align*}
    \int_{\mathbb{R}^q}\exp\Bigl\{-\lambda_0|u|^2\Bigr\}du=(2\pi)^{q/2}(2\lambda_0)^{-q/2}<\infty,
\end{align*}
we have
\begin{align*}
    &\quad\ \biggr|\int_{\hat{\mathbb{U}}_n\cap A_n}\hat{\bf{Z}}_n(u)\bigl\{\pi(\hat{\theta}_n+n^{-1/2}u)-\pi(\hat{\theta}_n)\bigr\}du\biggl|\\
    &\leq \int_{\hat{\mathbb{U}}_n\cap A_n}\hat{\bf{Z}}_n(u)\bigl|\pi(\hat{\theta}_n+n^{-1/2}u)-\pi(\hat{\theta}_n)\bigr|du\\
    &\leq\sup_{u\in\hat{\mathbb{U}}_n\cap A_n}\bigl|\pi(\hat{\theta}_n+n^{-1/2}u)-\pi(\hat{\theta}_n)
    \bigr|\int_{\hat{\mathbb{U}}_n\cap A_n}
    \exp\Bigl\{-\lambda_0|u|^2\Bigr\}du\\
    &\leq\sup_{u\in\hat{\mathbb{U}}_n\cap A_n}\bigl|\pi(\hat{\theta}_n+n^{-1/2}u)-\pi(\hat{\theta}_n)
    \bigr|\int_{\mathbb{R}^q}
    \exp\Bigl\{-\lambda_0|u|^2\Bigr\}du\\
    &\leq C\sup_{u\in\hat{\mathbb{U}}_n\cap A_n}\bigl|\pi(\hat{\theta}_n+n^{-1/2}u)-\pi(\hat{\theta}_n)
    \bigr|
\end{align*}
on $G_{1,n}$. Therefore, as it holds from Lemma \ref{pilemma} and (\ref{Gprob}) that
\begin{align*}
    &\quad\ {\bf{P}}\Biggl(\biggr|\int_{\hat{\mathbb{U}}_n\cap A_n}\hat{\bf{Z}}_n(u)\bigl\{\pi(\hat{\theta}_n+n^{-1/2}u)-\pi(\hat{\theta}_n)\bigr\}du\biggl|>\varepsilon\Biggr)\\
    &\leq{\bf{P}}\Biggl(\biggl\{\biggr|\int_{\hat{\mathbb{U}}_n\cap A_n}\hat{\bf{Z}}_n(u)\bigl\{\pi(\hat{\theta}_n+n^{-1/2}u)-\pi(\hat{\theta}_n)\bigr\}du\biggl|>\varepsilon\biggr\}\cap G_{1,n}\Biggr)\\
    &\qquad+{\bf{P}}\Biggl(\biggl\{\biggr|\int_{\hat{\mathbb{U}}_n\cap A_n}\hat{\bf{Z}}_n(u)\bigl\{\pi(\hat{\theta}_n+n^{-1/2}u)-\pi(\hat{\theta}_n)\bigr\}du\biggl|>\varepsilon\biggr\}\cap G^c_{1,n}\Biggr)\\
    &\leq {\bf{P}}\Biggl(\biggl\{C\sup_{u\in\hat{\mathbb{U}}_n\cap A_n}\bigl|\pi(\hat{\theta}_n+n^{-1/2}u)-\pi(\hat{\theta}_n)
    \bigr|>\varepsilon\biggr\}\cap G_{1,n}\Biggr)+{\bf{P}}\bigl(G_{1,n}^c\bigr)\\
    &\leq {\bf{P}}\biggl(C\sup_{u\in\hat{\mathbb{U}}_n\cap A_n}\bigl|\pi(\hat{\theta}_n+n^{-1/2}u)-\pi(\hat{\theta}_n)\bigr|>\varepsilon\biggr)+{\bf{P}}\bigl(G_{1,n}^c\bigr)\longrightarrow 0
\end{align*}
as $n\longrightarrow\infty$ for all $\varepsilon>0$, we obtain
\begin{align*}
    \int_{\hat{\mathbb{U}}_n\cap A_n}\hat{\bf{Z}}_n(u)\bigl\{\pi(\hat{\theta}_n+n^{-1/2}u)-\pi(\hat{\theta}_n)\bigr\}du\stackrel{p}{\longrightarrow}0,
\end{align*}
which completes the proof.
\end{proof}
\begin{proposition}\label{Anprop2}
Under $\bf{[A1]}$-$\bf{[A4]}$ and $\bf{[C1]}$,
\begin{align*}
    \int_{\hat{\mathbb{U}}_n\cap A_n}\hat{\bf{Z}}_n(u)du\stackrel{p}{\longrightarrow}\int_{\mathbb{R}^q}{\bf{Z}}(u)du
\end{align*}
as $n\longrightarrow\infty$.
\end{proposition}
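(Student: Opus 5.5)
We prove the statement by dominated convergence applied to $\tilde{\bf{Z}}_n$, argued along subsequences, since every ingredient is only available in probability. First note that
\begin{align*}
    \int_{\hat{\mathbb{U}}_n\cap A_n}\hat{\bf{Z}}_n(u)du=\int_{\mathbb{R}^q}\tilde{\bf{Z}}_n(u){\bf 1}_{\hat{\mathbb{U}}_n\cap A_n}(u)du.
\end{align*}
We set $\check{\bf{Z}}_n(u)=\hat{\bf{Z}}_n(u){\bf 1}_{\hat{\mathbb{U}}_n\cap A_n}(u)$ and aim to show
\begin{align*}
    \int_{\mathbb{R}^q}\bigl|\check{\bf{Z}}_n(u)-{\bf{Z}}(u)\bigr|du\stackrel{p}{\longrightarrow}0.
\end{align*}
For this it suffices that every subsequence has a further subsequence along which the convergence holds almost surely.

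\textbf{Step 1: an integrable dominating bound.} On the event $G_{1,n}$ from the proof of Proposition \ref{Anprop1}, which has ${\bf P}(G_{1,n})\to1$, inequality (\ref{Zhatine}) gives
\begin{align*}
    \check{\bf{Z}}_n(u)\leq \exp\{-\lambda_0|u|^2\}\quad\text{for all } u\in\mathbb{R}^q.
\end{align*}
Because $\lambda_{min}{\bf\Gamma}(\theta_0)\geq 5\lambda_0$, the limit also satisfies ${\bf Z}(u)\leq\exp\{-\lambda_0|u|^2\}$. Hence the integrand $|\check{\bf Z}_n-{\bf Z}|$ is dominated on $G_{1,n}$ by $2\exp\{-\lambda_0|u|^2\}$, which is integrable.

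\textbf{Step 2: pointwise convergence.} Fix $u$. By Lemma \ref{thetahat} and since $\Theta$ is open, $u\in\hat{\mathbb{U}}_n\cap A_n$ with probability tending to one. We use the Taylor expansion (\ref{ZhatTaylor}). The first-order term vanishes because $\partial_\theta{\bf H}_n(\hat\theta_n)=0$ once $\hat\theta_n$ is interior. The remaining terms behave as follows:
\begin{itemize}
\item $u^\top{\bf\Gamma}(\hat\theta_n)u\to u^\top{\bf\Gamma}(\theta_0)u$ in probability, by continuity of ${\bf\Gamma}$ and Lemma \ref{thetahat};
\item $|{\bf\Gamma}_n(\hat\theta_n)-{\bf\Gamma}(\hat\theta_n)|\to0$ in probability, by (\ref{gammaprob});
\item $|{\bf R}_n(u)|\leq \frac16\bigl(n^{-1}\sup_\theta|\partial^3_\theta{\bf H}_n(\theta)|\bigr)n^{-1/2}|u|^3=O_p(n^{-1/2})$, by (\ref{H3prob}).
\end{itemize}
Therefore $\check{\bf Z}_n(u)\stackrel{p}{\to}{\bf Z}(u)$ for each fixed $u$.

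\textbf{Step 3: combining, where the main difficulty lies.} Pointwise convergence in probability at each $u$ does not directly give $L^1$ convergence of random functions; this is the step requiring care. We avoid a countable dense set of points by making the convergence uniform through a small number of random scalars. From Steps 1--2, $\log\check{\bf Z}_n(u)+\frac12u^\top{\bf\Gamma}(\theta_0)u$ is bounded, for $u\in\hat{\mathbb{U}}_n\cap A_n$, by
\begin{align*}
    |u|^2\Bigl(\bigl|{\bf\Gamma}(\hat\theta_n)-{\bf\Gamma}(\theta_0)\bigr|+\bigl|{\bf\Gamma}_n(\hat\theta_n)-{\bf\Gamma}(\hat\theta_n)\bigr|\Bigr)+|u|^3\epsilon_n,
    \qquad \epsilon_n:=\tfrac16 n^{-3/2}\sup_\theta|\partial^3_\theta{\bf H}_n(\theta)|.
\end{align*}
This bound involves only finitely many random scalars tending to zero in probability: $\hat\theta_n-\theta_0$, $|{\bf\Gamma}_n(\hat\theta_n)-{\bf\Gamma}(\hat\theta_n)|$, and $\epsilon_n$.

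Given any subsequence, we extract a further subsequence along which these scalars converge almost surely and ${\bf 1}_{G_{1,n}}\to1$ almost surely. Along it, for almost every $\omega$, $\check{\bf Z}_n(u)\to{\bf Z}(u)$ for every $u$. Indeed, each fixed $u$ eventually lies in $\hat{\mathbb{U}}_n\cap A_n$, since $\hat\theta_n\to\theta_0\in\Theta$ with $\Theta$ open and $n^{-1/2}|u|\le n^{-1/8}$ eventually. The deterministic dominated convergence theorem with the bound from Step 1 then gives $\int|\check{\bf Z}_n-{\bf Z}|du\to0$ almost surely along the sub-subsequence. This yields convergence in probability and proves the statement.
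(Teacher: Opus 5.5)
Your proof is correct and follows essentially the same route as the paper. Both arguments pass to a sub-subsequence along which the relevant random scalars converge almost surely, and then apply dominated convergence with the Gaussian bound (\ref{Zhatine}) on $G_{1,n}$. The only difference is cosmetic: you treat $\int_{\mathbb{R}^q}\bigl|\hat{\bf{Z}}_n(u){\bf 1}_{\hat{\mathbb{U}}_n\cap A_n}(u)-{\bf{Z}}(u)\bigr|\,du$ in one step, whereas the paper splits it into (\ref{intAnprob-1}) and (\ref{intAnprob-2}).
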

\begin{proof}
If it follows that
\begin{align}
    \int_{\hat{\mathbb{U}}_n\cap A_n}\hat{\bf{Z}}_n(u)du-\int_{\hat{\mathbb{U}}_n\cap A_n}{\bf{Z}}(u)du\stackrel{p}{\longrightarrow}0 \label{intAnprob-1}
\end{align}
and
\begin{align}
    \int_{\hat{\mathbb{U}}_n\cap A_n}{\bf{Z}}(u)du\stackrel{p}{\longrightarrow}\int_{\mathbb{R}^q}{\bf{Z}}(u)du,\label{intAnprob-2}
\end{align}
Slutsky’s theorem shows 
\begin{align*}
    \int_{\hat{\mathbb{U}}_n\cap A_n}\hat{\bf{Z}}_n(u)du\stackrel{p}{\longrightarrow}\int_{\mathbb{R}^q}{\bf{Z}}(u)du.
\end{align*}
Therefore, it is sufficient to prove (\ref{intAnprob-1}) and (\ref{intAnprob-2}),
for which we follow the argument used in the proof of Theorem 1 in Jasra et al. \cite{Jasra(2019)}. \\
\ \\
\noindent
Proof of (\ref{intAnprob-1}). Recall that $\tilde{\bf{Z}}_n(u)=\hat{\bf{Z}}_n(u)$ for all $u\in \hat{\mathbb{U}}_n\cap A_n$. Since
\begin{align*}
    \biggl|\int_{\hat{\mathbb{U}}_n\cap A_n}\hat{\bf{Z}}_n(u)du-\int_{\hat{\mathbb{U}}_n\cap A_n}{\bf{Z}}(u)du\biggr|
    &\leq\int_{\hat{\mathbb{U}}_n\cap A_n}\bigl|\hat{\bf{Z}}_n(u)-{\bf{Z}}(u)\bigr|du\\
    &=\int_{\hat{\mathbb{U}}_n\cap A_n}\bigl|\tilde{\bf{Z}}_n(u)-{\bf{Z}}(u)\bigr|du\\
    &=\int_{\mathbb{R}^q}\bigl|\tilde{\bf{Z}}_n(u)-{\bf{Z}}(u)\bigr|{1}_{\hat{\mathbb{U}}_n\cap A_n}(u)du,
\end{align*}
it is enough to show
\begin{align*}
    \int_{\mathbb{R}^q}\bigl|\tilde{\bf{Z}}_n(u)-{\bf{Z}}(u)\bigr|{1}_{\hat{\mathbb{U}}_n\cap A_n}(u)du\stackrel{p}{\longrightarrow}0
\end{align*}
to prove (\ref{intAnprob-1}). Note that
\begin{align*}
    \int_{\mathbb{R}^q}\bigl|\tilde{\bf{Z}}_n(u)-{\bf{Z}}(u)\bigr|{1}_{\hat{\mathbb{U}}_n\cap A_n}(u)du&=\int_{\mathbb{R}^q}\bigl|\tilde{\bf{Z}}_n(u)-{\bf{Z}}(u)\bigr|{1}_{\hat{\mathbb{U}}_n\cap A_n}(u){1}_{G_{1,n}}du\\
    &\qquad+\int_{\mathbb{R}^q}\bigl|\tilde{\bf{Z}}_n(u)-{\bf{Z}}(u)\bigr|{1}_{\hat{\mathbb{U}}_n\cap A_n}(u){1}_{G^c_{1,n}}du.
\end{align*}
By (\ref{Gprob}), we have
\begin{align*}
    &\quad\ {\bf{P}}\biggl(\biggl|\int_{\mathbb{R}^q}\bigl|\tilde{\bf{Z}}_n(u)-{\bf{Z}}(u)\bigr|{1}_{\hat{\mathbb{U}}_n\cap A_n}(u){1}_{G_{1,n}^c}du\biggr|>\varepsilon\biggr)\\
    &\leq {\bf{P}}\biggl(\biggl\{\biggl|\int_{\mathbb{R}^q}\bigl|\tilde{\bf{Z}}_n(u)-{\bf{Z}}(u)\bigr|{1}_{\hat{\mathbb{U}}_n\cap A_n}(u){1}_{G_{1,n}^c}du\biggr|>\varepsilon\biggr\}\cap G_{1,n}\biggr)\\
    &\qquad+{\bf{P}}\biggl(\biggl\{\biggl|\int_{\mathbb{R}^q}\bigl|\tilde{\bf{Z}}_n(u)-{\bf{Z}}(u)\bigr|{1}_{\hat{\mathbb{U}}_n\cap A_n}(u){1}_{G_{1,n}^c}du\biggr|>\varepsilon\biggr\}\cap G^c_{1,n}\biggr)\\
    &\leq{\bf{P}}\Bigl(\bigl\{0>\varepsilon\bigr\}\cap G_{1,n}\Bigr)+{\bf{P}}\bigl(G_{1,n}^c\bigr)\\
    &\leq{\bf{P}}\bigl(0>\varepsilon\bigr)+{\bf{P}}\bigl(G_{1,n}^c\bigr)={\bf{P}}\bigl(G_{1,n}^c\bigr)\longrightarrow 0
\end{align*}
as $n\longrightarrow\infty$ for all $\varepsilon>0$, which yields
\begin{align*}
    \int_{\mathbb{R}^q}\bigl|\tilde{\bf{Z}}_n(u)-{\bf{Z}}(u)\bigr|{1}_{\hat{\mathbb{U}}_n\cap A_n}(u){1}_{G_{1,n}^c}du\stackrel{p}{\longrightarrow}0.
\end{align*}
Consequently, it remains to prove
\begin{align}
    S_{1,n}=\int_{\mathbb{R}^q}\bigl|\tilde{\bf{Z}}_n(u)-{\bf{Z}}(u)\bigr|{1}_{\hat{\mathbb{U}}_n\cap A_n}(u){1}_{G_{1,n}}du\stackrel{p}{\longrightarrow}0.\label{S1}
\end{align}
To show (\ref{S1}), we use a subsequence argument. Let $\mathbb{N}'\subset\mathbb{N}$ be an arbitrary infinite subsequence. It is sufficient to show that there exists a further subsequence $\mathbb{N}''=\{n''\}\subset\mathbb{N}'$ such that
\begin{align}
    S_{1,n''}\stackrel{a.s.}{\longrightarrow}0. \label{barS}
\end{align}
First of all, we set
\begin{align*}
    M_{1,n}(u)=\bigl|\tilde{\bf{Z}}_n(u)-{\bf{Z}}(u)\bigr|{1}_{\hat{\mathbb{U}}_n\cap A_n}(u){1}_{G_{1,n}}
\end{align*}
for $u\in\mathbb{R}^q$. On $G_{1,n}$, it is shown that
\begin{align*}
    \log\tilde{\bf{Z}}_n(u)-\log{\bf{Z}}(u)&=
    \log\hat{\bf{Z}}_n(u)-\log{\bf{Z}}(u)\\
    &=-\frac{1}{2}u^{\top}\bigl\{{\bf{\Gamma}}_n(\hat{\theta}_n)-{\bf{\Gamma}}(\theta_0)\bigr\}u+{\bf{R}}_{n}(u)
\end{align*}
and
\begin{align*}
    |{\bf{R}}_{n}(u)|\leq\frac{n^{-1/8}}{6}\biggl\{\frac{1}{n}\sup_{\theta\in\Theta}
    \bigl|\partial^3_{\theta}{\bf{H}}_n(\theta)\bigr|\biggr\}|u|^2
\end{align*}
for all $u\in\hat{\mathbb{U}}_n\cap A_n$ in an analogous way to the proof of Proposition \ref{Anprop1}. Since 
\begin{align*}
    \Bigl|-\frac{1}{2}u^{\top}\bigl\{{\bf{\Gamma}}_n(\hat{\theta}_n)-{\bf{\Gamma}}(\theta_0)\bigr\}u\Bigr|&\leq\frac{|u|^2}{2}\bigl|{\bf{\Gamma}}_n(\hat{\theta}_n)-{\bf{\Gamma}}(\theta_0)\bigr|\\
    &\leq\frac{|u|^2}{2}\sup_{\theta\in\Theta}\bigl|{\bf{\Gamma}}_n(\theta)-{\bf{\Gamma}}(\theta)\bigr|+\frac{|u|^2}{2}\bigl|{\bf{\Gamma}}(\hat{\theta}_n)-{\bf{\Gamma}}(\theta_0)\bigr|,
\end{align*}
on $G_{1,n}$, we have
\begin{align}
    \bigl|\log\tilde{\bf{Z}}_n(u)-\log{\bf{Z}}(u)\bigr|\leq C|u|^2Q_{n} \label{logZine}
\end{align}
for all $u\in \hat{\mathbb{U}}_n\cap A_n$, where
\begin{align*}
    Q_{n}=\sup_{\theta\in\Theta}\bigl|{\bf{\Gamma}}_n(\theta)-{\bf{\Gamma}}(\theta)\bigr|+\bigl|{\bf{\Gamma}}(\hat{\theta}_n)-{\bf{\Gamma}}(\theta_0)\bigr|+n^{-1/8}\biggl\{\frac{1}{n}\sup_{\theta\in\Theta}
    \bigl|\partial^3_{\theta}{\bf{H}}_n(\theta)\bigr|\biggr\}.
\end{align*}
Since Lemmas \ref{thetahat} and \ref{problemma} imply
\begin{align*}
    Q_{n}\stackrel{p}{\longrightarrow}0,
\end{align*}
there exists a further subsequence $\mathbb{N}''=\{n''\}\subset\mathbb{N}'$ such that
\begin{align}
    Q_{n''}\stackrel{a.s.}{\longrightarrow}0. \label{Ras}
\end{align}
As it holds from (\ref{Zhatine}) that
\begin{align*}
     \tilde{\bf{Z}}_n(u){1}_{\hat{\mathbb{U}}_n\cap A_n}(u){1}_{G_{1,n}}\leq \exp\Bigl\{-\lambda_0|u|^2\Bigr\}{1}_{\hat{\mathbb{U}}_n\cap A_n}(u){1}_{G_{1,n}}
\end{align*}
for all $u\in\mathbb{R}^q$, one gets
\begin{align*}
    M_{1,n}(u)&\leq
    \tilde{\bf{Z}}_{n}(u){1}_{\hat{\mathbb{U}}_n\cap A_n}(u){1}_{G_{1,n}}+{\bf{Z}}(u){1}_{\hat{\mathbb{U}}_n\cap A_n}(u){1}_{G_{1,n}}\\
    &\leq\exp\Bigl\{-\lambda_0|u|^2\Bigr\}{1}_{\hat{\mathbb{U}}_n\cap A_n}(u){1}_{G_{1,n}}+{\bf{Z}}(u)\\
    &\leq\exp\Bigl\{-\lambda_0|u|^2\Bigr\}+{\bf{Z}}(u)
\end{align*}
and
\begin{align*}
    &\quad\ \int_{\mathbb{R}^q}\exp\Bigl\{-\lambda_0|u|^2\Bigr\}du+\int_{\mathbb{R}^q}{\bf{Z}}(u)du\\
    &=(2\pi)^{q/2}(2\lambda_0)^{-q/2}+(2\pi)^{q/2}|\det {\bf{\Gamma}}(\theta_0)|^{-1/2}<\infty
\end{align*}
for all $u\in\mathbb{R}^q$. Since 
\begin{align*}
    |e^{x}-1|\leq |x|e^{|x|}
\end{align*}
for all $x\in\mathbb{R}$, we see from (\ref{logZine}) that 
\begin{align*}
    M_{1,n}(u)&=\bigl|\tilde{\bf{Z}}_n(u)-{\bf{Z}}(u)\bigr|{1}_{\hat{\mathbb{U}}_n\cap A_n}(u){1}_{G_{1,n}}\\
    &\leq{\bf{Z}}(u)\biggl|\frac{\tilde{\bf{Z}}_n(u)}{{\bf{Z}}(u)}-1\biggr|{1}_{\hat{\mathbb{U}}_n\cap A_n}(u){1}_{G_{1,n}}\\
    &={\bf{Z}}(u)\biggl|\exp\Bigl\{\log \tilde{\bf{Z}}_n(u)-\log{\bf{Z}}(u)\Bigr\}-1\biggr|{1}_{\hat{\mathbb{U}}_n\cap A_n}(u){1}_{G_{1,n}}\\
    &\leq{\bf{Z}}(u)\bigl|\log \tilde{\bf{Z}}_n(u)-\log{\bf{Z}}(u)\bigr|\exp\Bigl\{\bigl|\log \tilde{\bf{Z}}_n(u)-\log{\bf{Z}}(u)\bigr|\Bigr\}{1}_{\hat{\mathbb{U}}_n\cap A_n}(u){1}_{G_{1,n}}\\
    &\leq C|u|^2Q_{n}\exp\Bigl\{C|u|^2Q_{n}\Bigr\}{\bf{Z}}(u){1}_{\hat{\mathbb{U}}_n\cap A_n}(u){1}_{G_{1,n}}\\
    &\leq C|u|^2Q_{n}\exp\Bigl\{C|u|^2Q_{n}\Bigr\}{\bf{Z}}(u)
\end{align*}
for all $u\in\mathbb{R}^q$. Therefore, on
\begin{align*}
    \Bigl\{\lim_{n''\to\infty}Q_{n''}=0\Bigr\},
\end{align*}
for all $u\in\mathbb{R}^q$, we  have
\begin{align*}
     M_{1,n''}(u)&\leq\exp\Bigl\{-\lambda_0|u|^2\Bigr\}+{\bf{Z}}(u),\ \int_{\mathbb{R}^q}\exp\Bigl\{-\lambda_0|u|^2\Bigr\}+{\bf{Z}}(u)du<\infty
\end{align*}
and 
\begin{align*}
     (0\leq)M_{1,n''}(u)\leq C|u|^2Q_{n''}\exp\Bigl\{C|u|^2Q_{n''}\Bigr\}{\bf{Z}}(u)\longrightarrow 0
\end{align*}
as $n''\longrightarrow\infty$ , so that
\begin{align*}
    \lim_{n''\to\infty}S_{1,n''}&=\lim_{n''\to\infty}
    \int_{\mathbb{R}^q}M_{1,n''}(u)du\\
    &=\int_{\mathbb{R}^q}\lim_{n''\to\infty}M_{1,n''}(u)du=0
\end{align*}
by the dominated convergence theorem, which yields
\begin{align*}
    \Bigl\{\lim_{n''\to\infty}Q_{n''}=0\Bigr\}\subset\Bigl\{\lim_{n''\to\infty}S_{1,n''}=0\Bigr\}.
\end{align*}
Consequently, (\ref{Ras}) shows
\begin{align*}
    1={\bf{P}}\Bigl(\lim_{n''\to\infty}Q_{n''}=0\Bigr)\leq{\bf{P}}\Bigl(\lim_{n''\to\infty}S_{1,n''}=0\Bigr),
\end{align*}
so that (\ref{barS}) holds. This completes the proof.\\
\ \\
Proof of (\ref{intAnprob-2}). It is enough to prove that for all subsequence $\mathbb{N}'\subset\mathbb{N}$, there exists a further subsequence $\mathbb{N}''=\{n''\}\subset\mathbb{N}'$ such that
\begin{align}
    S_{2,n''}\stackrel{a.s.}{\longrightarrow}\int_{\mathbb{R}^q}{\bf{Z}}(u)du,  \label{bar2S}
\end{align}
where 
\begin{align*}
    S_{2,n}=\int_{\hat{\mathbb{U}}_n\cap A_n}{\bf{Z}}(u)du=\int_{\mathbb{R}^q}1_{\hat{\mathbb{U}}_n\cap A_n}(u){\bf{Z}}(u)du.
\end{align*}
Fix an arbitrary infinite subsequence $\mathbb{N}'\subset\mathbb{N}$. Let 
\begin{align*}
    M_{2,n}(u)=1_{\hat{\mathbb{U}}_n\cap A_n}(u){\bf{Z}}(u)
\end{align*}
for $u\in\mathbb{R}^q$. As it holds from Lemma \ref{thetahat} that 
\begin{align*}
    \hat{\theta}_n\stackrel{p}{\longrightarrow}\theta_0,
\end{align*}
there exists a further subsequence $\mathbb{N}''=\{n''\}\subset\mathbb{N}'$ such that
\begin{align}
    \hat{\theta}_{n''}\stackrel{a.s.}{\longrightarrow}\theta_0. \label{M2as}
\end{align}
First, we prove that 
\begin{align}
    \lim_{n''\to\infty}{1}_{\hat{\mathbb{U}}_{n''}\cap A_{n''}}(u)=1 \label{indi}
\end{align}
for all $u\in\mathbb{R}^q$ on
\begin{align*}
     \Bigl\{\lim_{n''\to\infty}\hat{\theta}_{n''}=\theta_0\Bigr\}.
\end{align*}
Fix $u_0\in\mathbb{R}^q$ arbitrarily. For all $n''\geq |u_0|^{8/3}$, we have
\begin{align*}
  {(n'')}^{-1/2}|u_0|\leq {(n'')}^{-1/8},
\end{align*}
which yields $u_0\in A_{n''}$. Note that there exists $\rho_0>0$ such that
\begin{align*}
    B(\theta_0,\rho_0)\subset\Theta
\end{align*}
since $\Theta$ is open, and there exists $n^{(3)}_0\in\mathbb{N}$ such that
\begin{align*}
    {(n'')}^{-1/2}|u_0|\leq\frac{\rho_0}{2}
\end{align*}
for all $n''\geq n^{(3)}_0$ since ${(n'')}^{-1/2} |u_0|\longrightarrow 0$ as $n''\longrightarrow\infty$. On 
\begin{align*}
     \Bigl\{\lim_{n''\to\infty}\hat{\theta}_{n''}=\theta_0\Bigr\},
\end{align*}
there exists $n^{(4)}_0\in\mathbb{N}$ such that
\begin{align*}
    |\hat{\theta}_{n''}-\theta_0|\leq \frac{\rho_0}{2}
\end{align*}
for all $n''\geq n^{(4)}_0$, so that
\begin{align*}
    \bigl|\hat{\theta}_{n''}+{(n'')}^{-1/2}u_0-\theta_0\bigr|\leq |\hat{\theta}_{n''}-\theta_0|+{(n'')}^{-1/2}|u_0|\leq\rho_0
\end{align*}
for all $n''\geq\max\{n_0^{(3)},n^{(4)}_0\}$, which deduces $u_0\in\hat{\mathbb{U}}_{n''}$ for all $n''\geq\max\{n_0^{(3)},n^{(4)}_0\}$. Hence, on 
\begin{align*}
     \Bigl\{\lim_{n''\to\infty}\hat{\theta}_{n''}=\theta_0\Bigr\},
\end{align*}
we obtain
\begin{align*}
    u_0\in\hat{\mathbb{U}}_{n''}\cap A_{n''}
\end{align*}
for all $n''\geq\max\{|u_0|^{8/3},n_0^{(3)},n^{(4)}_0\}$, which implies
\begin{align*}
    \lim_{n''\to\infty}{1}_{\hat{\mathbb{U}}_{n''}\cap A_{n''}}(u_0)=1.
\end{align*}
This completes the proof of (\ref{indi}). On
\begin{align*}
     \Bigl\{\lim_{n''\to\infty}\hat{\theta}_{n''}=\theta_0\Bigr\},
\end{align*}
it follows from (\ref{indi}) that
\begin{align*}
    M_{2,n''}(u)\leq{\bf{Z}}(u),\quad \int_{\mathbb{R}^q}{\bf{Z}}(u)du<\infty
\end{align*}
and
\begin{align*}
    \lim_{n''\to\infty}M_{2,n''}(u)={\bf{Z}}(u)\lim_{n''\to\infty}{1}_{\hat{\mathbb{U}}_{n''}\cap A_{n''}}(u)={\bf{Z}}(u)
\end{align*}
for all $u\in\mathbb{R}^q$, so that 
by the dominated convergence theorem, we have
\begin{align*}
    \lim_{n''\to\infty}S_{2,n''}&=\lim_{n''\to\infty}
    \int_{\mathbb{R}^q}M_{2,n''}(u)du\\
    &=\int_{\mathbb{R}^q}\lim_{n''\to\infty}M_{2,n''}(u)du=\int_{\mathbb{R}^q}{\bf{Z}}(u)du,
\end{align*}
which yields
\begin{align*}
    \Bigl\{\lim_{n''\to\infty}\hat{\theta}_{n''}=\theta_0\Bigr\}\subset\biggl\{\lim_{n''\to\infty}S_{2,n''}=\int_{\mathbb{R}^q}{\bf{Z}}(u)du\biggr\}.
\end{align*}
Consequently, since it follows from (\ref{M2as}) that 
\begin{align*}
    {\bf{P}}\biggl(\lim_{n''\to\infty}S_{2,n''}=\int_{\mathbb{R}^q}{\bf{Z}}(u)du\biggr)=1,
\end{align*}
we obtain (\ref{bar2S}). This completes the proof.
\end{proof}
\begin{proposition}\label{Ancprop}
Under $\bf{[A1]}$-$\bf{[A4]}$, $\bf{[B]}$ and $\bf{[C1]}$,
\begin{align*}
    \int_{\hat{\mathbb{U}}_n\cap A^c_n}
    \hat{\bf{Z}}_n(u)\pi(\hat{\theta}_n+n^{-1/2}u)du\stackrel{p}{\longrightarrow}0
\end{align*}
as $n\longrightarrow\infty$.
\end{proposition}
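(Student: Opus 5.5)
Since $\pi$ is bounded on $\bar{\Theta}$ by {\bf{[B]}}, it suffices to show
\begin{align*}
    \int_{\hat{\mathbb{U}}_n\cap A^c_n}\hat{\bf{Z}}_n(u)du\stackrel{p}{\longrightarrow}0 .
\end{align*}
The plan is to change variables back to $\theta=\hat{\theta}_n+n^{-1/2}u$. The integral then becomes
\begin{align*}
    n^{q/2}\int_{\Theta\cap\{|\theta-\hat{\theta}_n|>n^{-1/8}\}}\exp\bigl\{n\bigl({\bf{Y}}_n(\theta)-{\bf{Y}}_n(\hat{\theta}_n)\bigr)\bigr\}d\theta .
\end{align*}
It is bounded by $n^{q/2}|\Theta|$ times the supremum of the exponential over that region, where $|\Theta|<\infty$ because $\Theta$ is bounded. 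The goal is therefore to show that this supremum is at most $\exp\{-cn^{3/4}\}$ with probability tending to one. Since $n^{q/2}e^{-cn^{3/4}}\to0$, this gives the claim.

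First, write ${\bf{Y}}_n={\bf{Y}}+({\bf{Y}}_n-{\bf{Y}})$ and use (\ref{Yprob}) in Lemma \ref{problemma}:
\begin{align*}
    n\bigl({\bf{Y}}_n(\theta)-{\bf{Y}}_n(\hat{\theta}_n)\bigr)\leq n\bigl({\bf{Y}}(\theta)-{\bf{Y}}(\hat{\theta}_n)\bigr)+2\sqrt{n}\,O_p(1).
\end{align*}
Next, bound the deterministic part. By {\bf{[C1]}}, ${\bf{Y}}(\theta)\leq-\chi|\theta-\theta_0|^2$. Since ${\bf{Y}}\leq0$ and ${\bf{Y}}(\theta_0)=0$, we have $-{\bf{Y}}(\hat{\theta}_n)\geq0$. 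Also ${\bf{Y}}$ is smooth with zero gradient at $\theta_0$, so $-{\bf{Y}}(\hat{\theta}_n)\leq C|\hat{\theta}_n-\theta_0|^2=O_p(n^{-1})$ by Lemma \ref{thetahat}. Hence $-n{\bf{Y}}(\hat{\theta}_n)=O_p(1)$.

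Then use the geometry of the region. On the event $\{|\hat{\theta}_n-\theta_0|\leq n^{-1/8}/2\}$, whose probability tends to one by Lemma \ref{thetahat}, any $\theta$ with $|\theta-\hat{\theta}_n|>n^{-1/8}$ satisfies $|\theta-\theta_0|>n^{-1/8}/2$. Therefore $n{\bf{Y}}(\theta)\leq-\chi n^{3/4}/4$ there. Combining the pieces, on an event of probability tending to one the exponent is uniformly bounded by
\begin{align*}
    -\frac{\chi}{4}n^{3/4}+O_p(\sqrt{n})+O_p(1)\leq-\frac{\chi}{8}n^{3/4}.
\end{align*}
This yields the bound $C n^{q/2}\exp\{-\chi n^{3/4}/8\}\to0$, so the probability that the integral exceeds any $\varepsilon>0$ tends to zero. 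The formal write-up will use the same event-splitting style as in Proposition \ref{Anprop1}.

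The main obstacle is the uniform control of the random fluctuation. It relies on $n^{-1/8}$ being chosen so that $n\cdot n^{-1/4}=n^{3/4}$ dominates the $\sqrt{n}$ error from (\ref{Yprob}), which it does. A secondary point is justifying $-n{\bf{Y}}(\hat{\theta}_n)=O_p(1)$. I would do this with a second-order Taylor expansion of ${\bf{Y}}$ around $\theta_0$, using the convexity of $\Theta$ and the boundedness of $\partial_\theta^2{\bf{Y}}={-\bf{\Gamma}}$ on $\bar{\Theta}$, which follows from positive definiteness of ${\bf{\Sigma}}(\theta)$ on the compact set $\bar{\Theta}$.
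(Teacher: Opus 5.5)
Your proof is correct and follows the same route as the paper. You reduce via boundedness of $\pi$, then bound $\sup_{u\in\hat{\mathbb{U}}_n\cap A_n^c}\log\hat{\bf Z}_n(u)$ by $C\sqrt{n}-Cn^{3/4}$ on a high-probability event, using (\ref{Yprob}), the separation $|\hat{\theta}_n+n^{-1/2}u-\theta_0|\geq n^{-1/8}/2$ and {\bf [C1]}, and multiply by an $O(n^{q/2})$ volume bound. The differences are cosmetic: the paper encloses $\hat{\mathbb{U}}_n\cap A_n^c$ in a ball of radius $O(\sqrt{n})$ instead of changing variables back to $\theta$, and it drops the $\hat{\theta}_n$ term directly via $n{\bf Y}_n(\hat{\theta}_n)={\bf H}_n(\hat{\theta}_n)-{\bf H}_n(\theta_0)\geq 0$, so your Taylor step giving $-n{\bf Y}(\hat{\theta}_n)=O_p(1)$ is valid but unnecessary.
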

\begin{proof}
Since 
\begin{align*}
    0&\leq\int_{\hat{\mathbb{U}}_n\cap A^c_n}\hat{\bf{Z}}_n(u)\pi(\hat{\theta}_n+n^{-1/2}u)du\leq\sup_{\theta\in\Theta}\pi(\theta)\int_{\hat{\mathbb{U}}_n\cap A_n^c}\hat{\bf{Z}}_n(u)du,
\end{align*}
it is enough to show
\begin{align}
    \int_{\hat{\mathbb{U}}_n\cap A_n^c}\hat{\bf{Z}}_n(u)du\stackrel{p}{\longrightarrow}0.
    \label{intAncprob}
\end{align}
Fix an arbitrary number $\varepsilon>0$. As it holds from Lemmas \ref{thetahat} and \ref{problemma} that there exists $M^{(2)}_{0}>0$ and $n_0^{(5)}\in\mathbb{N}$ such that 
\begin{align*}
    {\bf{P}}\Bigl(|\hat{u}_n|>M^{(2)}_0\Bigr)<\frac{\varepsilon}{2}
\end{align*}
and
\begin{align*}
    {\bf{P}}\Bigl(\sqrt{n}\sup_{\theta\in\Theta}\bigl|{\bf{Y}}_n(\theta)-{\bf{Y}}(\theta)\bigr|>M^{(2)}_0\Bigr)<\frac{\varepsilon}{2}
\end{align*}
for all $n\geq n_0^{(5)}$, we have 
\begin{align}
    {\bf{P}}\bigl(G^c_{2,n}\bigr)<\varepsilon \label{UAcP-1}
\end{align}
for all $n\geq n_0^{(5)}$, where 
\begin{align*}
    G_{2,n}&=\Bigl\{|\hat{u}_n|\leq M^{(2)}_0\Bigr\}\cap\Bigl\{\sqrt{n}\sup_{\theta\in\Theta}\bigl|{\bf{Y}}_n(\theta)-{\bf{Y}}(\theta)\bigr|\leq M^{(2)}_0\Bigr\}.
\end{align*}
Since 
\begin{align*}
    n{\bf{Y}}_n(\hat{\theta}_n)={\bf{H}}_n(\hat{\theta}_n)-{\bf{H}}_n(\theta_0)\geq 0,
\end{align*}
by the definition of $\hat{\theta}_n$, it follows that
\begin{align*}
    \sup_{u\in\hat{\mathbb{U}}_n\cap A_n^c}\log\hat{\bf{Z}}_n(u)&=\sup_{u\in\hat{\mathbb{U}}_n\cap A_n^c}\Bigl\{{\bf{H}}_n(\hat{\theta}_n+n^{-1/2}u)-{\bf{H}}_n(\hat{\theta}_n)\Bigr\}\\
    &=\sup_{u\in\hat{\mathbb{U}}_n\cap A_n^c}\Bigl\{n{\bf{Y}}_n(\hat{\theta}_n+n^{-1/2}u)-n{\bf{Y}}_n(\hat{\theta}_n)\Bigr\}\\
    &=n\biggl\{\sup_{u\in\hat{\mathbb{U}}_n\cap A_n^c}{\bf{Y}}_n(\hat{\theta}_n+n^{-1/2}u)\biggr\}-n{\bf{Y}}_n(\hat{\theta}_n)\\
    &\leq n\sup_{u\in\hat{\mathbb{U}}_n\cap A_n^c}{\bf{Y}}_n(\hat{\theta}_n+n^{-1/2}u).
\end{align*}
Furthermore, we see
\begin{align*}
    &\quad\ n\sup_{u\in\hat{\mathbb{U}}_n\cap A_n^c}{\bf{Y}}_n(\hat{\theta}_n+n^{-1/2}u)\\
    &=n\sup_{u\in\hat{\mathbb{U}}_n\cap A_n^c}\Bigl\{{\bf{Y}}_n(\hat{\theta}_n+n^{-1/2}u)-{\bf{Y}}(\hat{\theta}_n+n^{-1/2}u)+{\bf{Y}}(\hat{\theta}_n+n^{-1/2}u)\Bigr\}\\
    &\leq n\sup_{u\in\hat{\mathbb{U}}_n\cap A_n^c}\Bigl\{{\bf{Y}}_n(\hat{\theta}_n+n^{-1/2}u)-{\bf{Y}}(\hat{\theta}_n+n^{-1/2}u)\Bigr\}+n\sup_{u\in\hat{\mathbb{U}}_n\cap A_n^c}{\bf{Y}}(\hat{\theta}_n+n^{-1/2}u)\\
    &\leq n\sup_{\theta\in\Theta}\bigl|{\bf{Y}}_n(\theta)-{\bf{Y}}(\theta)\bigr|
    +n\sup_{u\in\hat{\mathbb{U}}_n\cap A_n^c}{\bf{Y}}(\hat{\theta}_n+n^{-1/2}u)\\
    &\leq \sqrt{n}M^{(2)}_0+n\sup_{u\in\hat{\mathbb{U}}_n\cap A_n^c}{\bf{Y}}(\hat{\theta}_n+n^{-1/2}u)
\end{align*}
on $G_{2,n}$, so that 
\begin{align}
    \sup_{u\in\hat{\mathbb{U}}_n\cap A_n^c}\log\hat{\bf{Z}}_n(u)\leq 
    \sqrt{n}M^{(2)}_0+n\sup_{u\in\hat{\mathbb{U}}_n\cap A_n^c}{\bf{Y}}(\hat{\theta}_n+n^{-1/2}u)\label{UAcP-2-1}
\end{align}
on $G_{2,n}$. 
Next, we evaluate the second term on the right hand side of inequality \eqref{UAcP-2-1}.
On $G_{2,n}$, it holds that
\begin{align*}
    \bigl|\hat{\theta}_n+n^{-1/2}u-\theta_0\bigr|&=n^{-1/2}\bigl|\hat{u}_n+u\bigr|\\
    &\geq n^{-1/2}|u|-n^{-1/2}|\hat{u}_n|\\
    &\geq n^{-1/8}-n^{-1/2}M^{(2)}_0=n^{-1/8}\bigl(1-n^{-3/8}M^{(2)}_0\bigr)
\end{align*}
for all $u\in\hat{\mathbb{U}}_n\cap A_n^c$, and there exists $n_0^{(6)}\in\mathbb{N}$ such that
\begin{align*}
    n^{-3/8}M^{(2)}_0\leq 1/2
\end{align*}
for all $n\geq n_0^{(6)}$ since $n^{-3/8}M^{(2)}_0\longrightarrow 0$ as $n\longrightarrow\infty$. 
Thus, setting $\delta_n=\frac{n^{-1/8}}{2}$, we obtain that for all $n\geq n_0^{(6)}$, on $G_{2,n}$, 
\begin{align*}
    \bigl|\hat{\theta}_n+n^{-1/2}u-\theta_0\bigr|\geq\delta_n
\end{align*}
for all $u\in\hat{\mathbb{U}}_n\cap A_n^c$, so that 
\begin{align*}
    {\bf{Y}}(\hat{\theta}_n+n^{-1/2}u)\leq\sup_{|\theta-\theta_0|\geq\delta_n}{\bf{Y}}(\theta)
\end{align*}
for all $u\in\hat{\mathbb{U}}_n\cap A_n^c$.
Consequently, as it holds from [{\bf{C1}}] that
\begin{align*}
    \sup_{|\theta-\theta_0|\geq\delta_n}{\bf{Y}}(\theta)\leq-\chi\delta_n^2, 
\end{align*}
for all $n\geq n_0^{(6)}$, on $G_{2,n}$, we obtain
\begin{align*}
    {\bf{Y}}(\hat{\theta}_n+n^{-1/2}u)\leq-\chi\delta_n^2
\end{align*}
for all $u\in\hat{\mathbb{U}}_n\cap A_n^c$, which implies
\begin{align}
    n\sup_{u\in\hat{\mathbb{U}}_n\cap A_n^c}{\bf{Y}}(\hat{\theta}_n+n^{-1/2}u)\leq -\chi n\delta_n^2. \label{UAcP-2-2}
\end{align}
By (\ref{UAcP-2-1}) and (\ref{UAcP-2-2}), for all $n\geq n_0^{(6)}$, on $G_{2,n}$, 
\begin{align*}
    \sup_{u\in\hat{\mathbb{U}}_n\cap A_n^c}\log\hat{\bf{Z}}_n(u)&\leq Cn^{1/2}-Cn^{3/4}\\
    &\leq -Cn^{3/4}\bigl(1-n^{-1/4}\bigr).
\end{align*}
Since $n^{-1/4}\longrightarrow 0$ as $n\longrightarrow\infty$, there exists $n_0^{(7)}\in\mathbb{N}$ such that 
\begin{align*}
    n^{-1/4}\leq 1/2
\end{align*}
for all $n\geq n_0^{(7)}$, which deduces 
\begin{align*}
    \sup_{u\in\hat{\mathbb{U}}_n\cap A_n^c}\hat{\bf{Z}}_n(u)\leq\exp\Bigl\{-Cn^{3/4}\Bigr\}
\end{align*}
on $G_{2,n}$ for all $n\geq\max\{n_0^{(6)},n_0^{(7)}\}$. Therefore, for all $n\geq\max\{n_0^{(6)},n_0^{(7)}\}$, we have
\begin{align*}
    \int_{\hat{\mathbb{U}}_n\cap A_n^c}\hat{\bf{Z}}_n(u)du&\leq\sup_{u\in\hat{\mathbb{U}}_n\cap A_n^c}\hat{\bf{Z}}_n(u)\int_{\hat{\mathbb{U}}_n\cap A_n^c}du\\
    &\leq\exp\Bigl\{-Cn^{3/4}\Bigr\}\int_{\hat{\mathbb{U}}_n\cap A_n^c}du
\end{align*}
on $G_{2,n}$. Since $\Theta$ is bounded, there exists $M_0^{(3)}>0$ such that
\begin{align*}
    |\theta|\leq M_0^{(3)}
\end{align*}
for all $\theta\in\Theta$, and
\begin{align*}
    |n^{-1/2}u|&=\bigl|-\hat{\theta}_n+\hat{\theta}_n+n^{-1/2}u\bigr|\\
    &\leq|\hat{\theta}_n|+\bigl|\hat{\theta}_n+n^{-1/2}u\bigr|\leq 2M_0^{(3)},
\end{align*}
for all $u\in\hat{\mathbb{U}}_n\cap A_n^c$, so that 
\begin{align*}
    \hat{\mathbb{U}}_n\cap A_n^c\subset\Bigl\{u\in\mathbb{R}^q\big|\, |u|\leq 2n^{1/2}M_0^{(3)}\Bigr\}.
\end{align*}
In addition, by using the polar coordinates transformation, we have
\begin{align*}
    &\quad\ \int_{|u|\leq2n^{1/2}M_0^{(3)}}du\\
    &=\int_0^{2\pi}\cdots\int_0^{\pi}\int_0^{2n^{1/2}M_0^{(3)}}r^{q-1}\sin^{q-2}\phi_1\cdots\sin\phi_{q-2}drd\phi_1\cdots d\phi_{q-1}\\
    &\leq\int_0^{2\pi}\cdots\int_0^{\pi}\int_{0}^{2n^{1/2}M_0^{(3)}}r^{q-1}drd\phi_1\cdots d\phi_{q-1}\leq Cn^{q/2},
\end{align*}
where $r\in [0,\infty)$, $\phi_i\in[0,\pi]\ (i=1,\cdots,q-2)$, $\phi_{q-1}\in[0,2\pi]$ and
\begin{align*}
    u^{(1)}=r\cos\phi_1,\
    u^{(i)}=r\cos\phi_{i}\prod_{j=1}^{i-1}\sin\phi_{j}\ (i=2,\ldots, q-1),\ u^{(q)}=r\prod_{j=1}^{q-1}\sin\phi_{j}.
\end{align*}
Consequently, for all $n\geq\max\{n_0^{(6)},n_0^{(7)}\}$, we obtain 
\begin{align*}
    \int_{\hat{\mathbb{U}}_n\cap A_n^c}\hat{\bf{Z}}_n(u)du\leq Cn^{q/2}\exp\Bigl\{-Cn^{3/4}\Bigr\}
\end{align*}
on $G_{2,n}$. Since 
\begin{align*}
    Cn^{q/2}\exp\Bigl\{-Cn^{3/4}\Bigr\}\longrightarrow 0
\end{align*}
as $n\longrightarrow\infty$, there exists $n_0^{(8)}\in\mathbb{N}$ such that 
\begin{align*}
    Cn^{q/2}\exp\Bigl\{-Cn^{3/4}\Bigr\}<\varepsilon
\end{align*}
for all $n\geq n_0^{(8)}$, so that for all $n\geq\max\{n_0^{(6)},n_0^{(7)},n_0^{(8)}\}$,
\begin{align*}
    \int_{\hat{\mathbb{U}}_n\cap A_n^c}\hat{\bf{Z}}_n(u)du<\varepsilon
\end{align*}
on $G_{2,n}$. Therefore, (\ref{UAcP-1}) shows
\begin{align*}
    {\bf{P}}\biggl(\int_{\hat{\mathbb{U}}_n\cap A_n^c}\hat{\bf{Z}}_n(u)du>\varepsilon\biggr)&={\bf{P}}\biggl(\biggl\{\int_{\hat{\mathbb{U}}_n\cap A_n^c}\hat{\bf{Z}}_n(u)du>\varepsilon\biggr\}\cap G_{2,n}\biggr)\\
    &\qquad+{\bf{P}}\biggl(\biggl\{\int_{\hat{\mathbb{U}}_n\cap A_n^c}\hat{\bf{Z}}_n(u)du>\varepsilon\biggr\}\cap G^{c}_{2,n}\biggr)\\
    &\leq 0+{\bf{P}}\bigl(G^c_{2,n}\bigr)={\bf{P}}\bigl(G^c_{2,n}\bigr)<\varepsilon
\end{align*}
for all $n\geq\max\{n_0^{(5)}, n_0^{(6)},n_0^{(7)},n_0^{(8)}\}$, which yields (\ref{intAncprob}). This completes the proof.
\end{proof}
\begin{proof}[\textbf{Proof of Theorem \ref{Hexpansion}}]
By the change of variables
\begin{align*}
    \theta=\hat{\theta}_n+n^{-1/2}u
\end{align*}
for $u\in\hat{\mathbb{U}}_n$, it holds that
\begin{align*}
    \int_{\Theta}\exp\bigl\{{\bf{H}}_n(\theta)\bigr\}\pi(\theta)d\theta
    &=\int_{\hat{\mathbb{U}}_n}\exp\bigl\{{\bf{H}}_n(\hat{\theta}_n+n^{-1/2}u)\bigr\}\pi
    (\hat{\theta}_n+n^{-1/2}u)\Bigl|\det\Bigl(\frac{\partial\theta}{\partial u}\Bigr)\Bigr|du\\
    &=n^{-q/2}\int_{\hat{\mathbb{U}}_n}\exp\bigl\{{\bf{H}}_n(\hat{\theta}_n+n^{-1/2}u)\bigr\}\pi
    (\hat{\theta}_n+n^{-1/2}u)du
\end{align*}
since 
\begin{align*}
    \biggl|\det\Bigl(\frac{\partial\theta}{\partial u}\Bigr)\biggr|=\left|\det\begin{pmatrix}
    n^{-1/2} & 0 & \cdots & 0\\
    0 & n^{-1/2} & \cdots & 0\\
    \vdots & \vdots & \ddots & \vdots\\
    0 & 0 & \cdots & n^{-1/2}
    \end{pmatrix}\right|=n^{-q/2}.
\end{align*}
Hence, we have
\begin{align*}
    &\quad\ \log\int_{\Theta}\exp\bigl\{{\bf{H}}_n(\theta)\bigr\}\pi(\theta)d\theta\\
    &=\log n^{-q/2}\int_{\hat{\mathbb{U}}_n}\exp\bigl\{{\bf{H}}_n(\hat{\theta}_n+n^{-1/2}u)\bigr\}\pi
    (\hat{\theta}_n+n^{-1/2}u)du\\
    &=\log n^{-q/2}\exp\bigl\{{\bf{H}}_n(\hat{\theta}_n)\bigr\}\int_{\hat{\mathbb{U}}_n}\exp\bigl\{{\bf{H}}_n(\hat{\theta}_n+n^{-1/2}u)-{\bf{H}}_n(\hat{\theta}_n)\bigr\}\pi
    (\hat{\theta}_n+n^{-1/2}u)du\\
    &=-\frac{q}{2}\log n+{\bf{H}}_n(\hat{\theta}_n)+\log\int_{\hat{\mathbb{U}}_n}\hat{\bf{Z}}_n(u)
    \pi(\hat{\theta}_n+n^{-1/2}u)du,
\end{align*}
which yields 
\begin{align}
\begin{split}
    &\quad\ \frac{1}{n}\log\int_{\Theta}\exp\bigl\{{\bf{H}}_n(\theta)\bigr\}\pi(\theta)d\theta\\
    &=\frac{1}{n}{\bf{H}}_n(\hat{\theta}_n)-\frac{q}{2n}\log n+\frac{1}{n}\log\int_{\hat{\mathbb{U}}_n}\hat{\bf{Z}}_n(u)\pi(\hat{\theta}_n+n^{-1/2}u)du.\label{zenkin}
\end{split}
\end{align}
A decomposition is given by
\begin{align*}
    &\quad\ \int_{\hat{\mathbb{U}}_n}\hat{\bf{Z}}_n(u)\pi(\hat{\theta}_n+n^{-1/2}u)du\\
    &=\int_{\hat{\mathbb{U}}_n\cap A_n}\hat{\bf{Z}}_n(u)\pi(\hat{\theta}_n+n^{-1/2}u)du+\int_{\hat{\mathbb{U}}_n\cap A^c_n}
    \hat{\bf{Z}}_n(u)\pi(\hat{\theta}_n+n^{-1/2}u)du\\
    &=\int_{\hat{\mathbb{U}}_n\cap A_n}\hat{{\bf{Z}}}_n(u)\bigl\{\pi(\hat{\theta}_n+n^{-1/2}u)-\pi(\hat{\theta}_n)\bigr\}du\\
    &\qquad+\pi(\hat{\theta}_n)\int_{\hat{\mathbb{U}}_n\cap A_n}\hat{{\bf{Z}}}_n(u)du+\int_{\hat{\mathbb{U}}_n\cap A^c_n}
    \hat{\bf{Z}}_n(u)\pi(\hat{\theta}_n+n^{-1/2}u)du.
\end{align*}
By Lemma \ref{thetahat} and the continuous mapping theorem, we have
\begin{align*}
    \pi(\hat{\theta}_n)\stackrel{p}{\longrightarrow}\pi(\theta_0).
\end{align*}
In addition, it holds from Propositions \ref{Anprop1}-\ref{Ancprop} that 
\begin{align*}
    &\int_{\hat{\mathbb{U}}_n\cap A_n}\hat{{\bf{Z}}}_n(u)\bigl\{\pi(\hat{\theta}_n+n^{-1/2}u)-\pi(\hat{\theta}_n)\bigr\}du\stackrel{p}{\longrightarrow}0,\\
    &\qquad\qquad\int_{\hat{\mathbb{U}}_n\cap A_n}\hat{{\bf{Z}}}_n(u)du
    \stackrel{p}{\longrightarrow}\int_{\mathbb{R}^q}{\bf{Z}}(u)du
\end{align*}
and
\begin{align*}
    &\qquad\quad\int_{\hat{\mathbb{U}}_n\cap A^c_n}
    \hat{\bf{Z}}_n(u)\pi(\hat{\theta}_n+n^{-1/2}u)du\stackrel{p}{\longrightarrow}0.
\end{align*}
Consequently, Slutsky’s theorem implies 
\begin{align}
    \int_{\hat{\mathbb{U}}_n}\hat{\bf{Z}}_n(u)\pi(\hat{\theta}_n+n^{-1/2}u)du
    \stackrel{p}{\longrightarrow}\pi(\theta_0)\int_{\mathbb{R}^q}{\bf{Z}}(u)du.\label{intUprob}
\end{align}
Since $\pi(\theta_0)>0$ and ${\bf{\Gamma}}(\theta_0)$ is positive definite, we have
\begin{align*}
    &\quad\ \pi(\theta_0)\int_{\mathbb{R}^q}{\bf{Z}}(u)du
    =\pi(\theta_0)(2\pi)^{q/2}|\det{\bf{\Gamma}}(\theta_0)|^{-1/2}>0,
\end{align*}
so that the continuous mapping theorem and (\ref{intUprob}) imply
\begin{align*}
    \log\int_{\hat{\mathbb{U}}_n}\hat{\bf{Z}}_n(u)\pi(\hat{\theta}_n+n^{-1/2}u)du=O_p(1).
\end{align*}
Therefore, 
from (\ref{zenkin}), we obtain
\begin{align*}
    \frac{1}{n}\log\int_{\Theta}\exp\bigl\{{\bf{H}}_n(\theta)\bigr\}\pi(\theta)d\theta
    =\frac{1}{n}{\bf{H}}_n(\hat{\theta}_n)-\frac{q}{2n}\log n+O_p\Bigl(\frac{1}{n}\Bigr),
\end{align*}
which completes the proof.
\end{proof}
\begin{proof}[\textbf{Proof of Theorem \ref{BICcons2}}]
By Lemma 10 in Kusano and Uchida \cite{Kusano(jumpAIC)}, in an analogous manner to Theorem 3 in Kusano and Uchida \cite{Kusano(BIC)}, we can show the result.
\end{proof}
When Model $m^*$ is nested by Model $m$, there exist a matrix $F_{m^*,m}\in\mathbb{R}^{q_{m}\times q_{m^*}}$ with $F_{m^*,m}^{\top}F_{m^*,m}=\mathbb{I}_{q_{m^*}}$ and a constant $c_{m^*}\in\mathbb{R}^{q_{m^*}}$ such that
\begin{align}
    {\bf{H}}_{m^*,n}(\theta_{m^*})={\bf{H}}_{m,n}(F_{m^*,m}\theta_{m^*}+c_{m^*}) \label{nest}
\end{align}
for all $\theta_{m^*}\in\Theta_{m^*}$. For simplicity, 
we write $F_{m^*,m}$ and $c_{m^*}$ as $F$ and $c$. Set
\begin{align*}
    {\bf{F}}(\theta_{m^*})=F\theta_{m^*}+c
\end{align*}
for $\theta_{m^*}\in\Theta_{m^*}$ and
\begin{align*}
    {\bf{G}}_m(\theta_{m,0})=F\bigl(F^{\top} {\bf{\Gamma}}_m(\theta_{m,0})F\bigr)^{-1}F^{\top}-
    {\bf{\Gamma}}_m(\theta_{m,0})^{-1}.
\end{align*}
\begin{lemma}\label{Hsqrt}
Under $\bf{[A1]}$-$\bf{[A4]}$, for all $m\in\mathcal{M}$, 
\begin{align*}
    \frac{1}{\sqrt{n}}\partial_{\theta_{m}}
    {\bf{H}}_{m,n}(\theta_{m,0})\overset{d}{\longrightarrow}{\bf{\Gamma}}_m(\theta_{m,0})^{1/2}Z_{q_m}
\end{align*}
as $n\longrightarrow\infty$.
\end{lemma}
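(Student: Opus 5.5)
The plan is to write the score as a linear functional of the truncated realized covariance $\check{\bf{\Sigma}}_n$ and the count $N_n$ from the proof of Lemma \ref{problemma}, and then transfer their known central limit theorem through the delta method. For fixed $m\in\mathcal{M}$ (index $m$ suppressed), differentiating the representation of $\frac1n{\bf H}_n(\theta)$ used in the proof of Lemma \ref{problemma} gives, for each coordinate $\theta^{(j)}$,
\begin{align*}
    \frac{1}{\sqrt{n}}\partial_{\theta^{(j)}}{\bf{H}}_n(\theta_0)
    =\frac{\sqrt{n}}{2}\tr\Bigl\{{\bf{\Sigma}}(\theta_0)^{-1}\bigl(\partial_{\theta^{(j)}}{\bf{\Sigma}}(\theta_0)\bigr){\bf{\Sigma}}(\theta_0)^{-1}\check{\bf{\Sigma}}_n\Bigr\}
    -\frac{\sqrt{n}}{2}\frac{N_n}{n}\tr\Bigl\{{\bf{\Sigma}}(\theta_0)^{-1}\partial_{\theta^{(j)}}{\bf{\Sigma}}(\theta_0)\Bigr\}.
\end{align*}
Since ${\bf\Sigma}(\theta_0)={\bf\Sigma}_0$, I add and subtract ${\bf\Sigma}_0$ in the first term and $1$ in the factor $N_n/n$ in the second. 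The two constant contributions cancel exactly, because $\tr\{{\bf\Sigma}_0^{-1}(\partial{\bf\Sigma}_0){\bf\Sigma}_0^{-1}{\bf\Sigma}_0\}=\tr\{{\bf\Sigma}_0^{-1}\partial{\bf\Sigma}_0\}$. What remains is
\begin{align*}
    \frac{1}{\sqrt{n}}\partial_{\theta}{\bf{H}}_n(\theta_0)=\frac12\Delta_0^{\top}\,\mathbb{D}^{\top}\bigl({\bf\Sigma}_0^{-1}\otimes{\bf\Sigma}_0^{-1}\bigr)\mathbb{D}\,\sqrt{n}\,\vech\bigl(\check{\bf\Sigma}_n-{\bf\Sigma}_0\bigr)+O(1)\cdot\sqrt{n}\Bigl(\frac{N_n}{n}-1\Bigr),
\end{align*}
where $\mathbb{D}$ is the duplication matrix and $\Delta_0$ is as in the remark after [C1].

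The second term is $o_p(1)$ by the fact already quoted in the proof of Lemma \ref{problemma} from Kusano and Uchida \cite{Kusano(jump)}. For the first term I will invoke the central limit theorem for the truncated realized covariance from Proposition 1 and Theorem 1 of \cite{Kusano(jump)}:
\begin{align*}
    \sqrt{n}\,\vech\bigl(\check{\bf\Sigma}_n-{\bf\Sigma}_0\bigr)\stackrel{d}{\longrightarrow}N\bigl(0,\,2\mathbb{D}^{+}({\bf\Sigma}_0\otimes{\bf\Sigma}_0)\mathbb{D}^{+\top}\bigr).
\end{align*}
This is the main analytic input, and the step I expect to be the genuine obstacle. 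Its proof requires showing that, with $\rho\in[1/3,1/2)$, truncation removes the jump increments and keeps the diffusive ones up to $o_p(n^{-1/2})$ errors, using [A1]–[A4]. The drift contributions must also be shown to be $O_p(\sqrt{n}h_n)=o_p(1)$ since $T$ is fixed. Here I would cite the earlier work rather than redo it.

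Finally I apply Slutsky's theorem and compute the limiting covariance. The identity $\mathbb{D}\mathbb{D}^{+}({\bf\Sigma}_0\otimes{\bf\Sigma}_0)\mathbb{D}^{+\top}\mathbb{D}^{\top}=({\bf\Sigma}_0\otimes{\bf\Sigma}_0)\mathbb{D}\mathbb{D}^{+}$ acting on symmetric directions reduces the covariance to
\begin{align*}
    \frac12\Delta_0^{\top}\mathbb{D}^{\top}({\bf\Sigma}_0^{-1}\otimes{\bf\Sigma}_0^{-1})\mathbb{D}\Delta_0.
\end{align*}
I need to check that this equals ${\bf\Gamma}(\theta_0)=-\partial_\theta^2{\bf H}(\theta_0)$. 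Differentiating ${\bf H}$ twice at $\theta_0$, the second-derivative-of-${\bf\Sigma}$ terms cancel. The surviving term is $\frac12\tr\{{\bf\Sigma}_0^{-1}\partial_i{\bf\Sigma}_0{\bf\Sigma}_0^{-1}\partial_j{\bf\Sigma}_0\}$, which is exactly the same matrix. Hence the limit is $N_{q}(0,{\bf\Gamma}(\theta_0))$, which has the same law as ${\bf\Gamma}_m(\theta_{m,0})^{1/2}Z_{q_m}$.
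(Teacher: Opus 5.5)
Your proposal is correct and follows essentially the same route as the paper, which simply defers to the proof of Theorem 2 in Kusano and Uchida \cite{Kusano(jump)}. There, as in your argument, the normalized score at $\theta_{m,0}$ is written as a linear map of $\sqrt{n}\,\vech(\check{\bf{\Sigma}}_n-{\bf{\Sigma}}_0)$ plus a negligible $\sqrt{n}(N_n/n-1)$ term, the truncated realized-covariance CLT is applied, and the limiting covariance is identified with ${\bf{\Gamma}}_m(\theta_{m,0})=\bigl(\frac{1}{2}\tr\bigl\{{\bf{\Sigma}}_0^{-1}(\partial_{\theta^{(i)}}{\bf{\Sigma}}_0){\bf{\Sigma}}_0^{-1}(\partial_{\theta^{(j)}}{\bf{\Sigma}}_0)\bigr\}\bigr)_{ij}$; your cancellation of the constant terms and your duplication-matrix covariance computation both check out.
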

\begin{proof}
    See the proof of Theorem 2 in Kusano and Uchida \cite{Kusano(jump)}.
\end{proof}
\begin{proposition}\label{fthetahatprop}
Under $\bf{[A1]}$-$\bf{[A4]}$ and $\bf{[C1]}$, for all $m\in\mathcal{M}$,
\begin{align*}
    \sqrt{n}\bigl({\bf{F}}(\hat{\theta}_{m^*,n})-\hat{\theta}_{m,n}\bigr)
    \overset{d}{\longrightarrow}{\bf{G}}_m(\theta_{m,0})
     {\bf{\Gamma}}_m(\theta_{m,0})^{1/2}Z_{q_m}
\end{align*}
as $n\longrightarrow\infty$.
\end{proposition}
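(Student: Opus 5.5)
We prove the statement by writing both estimators as linear functionals of the single score vector $n^{-1/2}\partial_{\theta_m}{\bf{H}}_{m,n}(\theta_{m,0})$, and then applying Lemma \ref{Hsqrt} together with the continuous mapping theorem.

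\textbf{Reduction to Model $m$.} Since Model $m^*$ is nested in Model $m$ and both are correctly specified, ${\bf{\Sigma}}_0={\bf{\Sigma}}_{m^*}(\theta_{m^*,0})$. Using (\ref{nest}) and the fact that ${\bf{\Sigma}}_0$ is determined by the limit ${\bf{H}}$, we obtain ${\bf{F}}(\theta_{m^*,0})=\theta_{m,0}$ from the identifiability in {\bf{[C1]}}. Differentiating (\ref{nest}) gives
\begin{align*}
    \partial_{\theta_{m^*}}{\bf{H}}_{m^*,n}(\theta_{m^*})&=F^{\top}\partial_{\theta_m}{\bf{H}}_{m,n}({\bf{F}}(\theta_{m^*})),\\
    \partial^2_{\theta_{m^*}}{\bf{H}}_{m^*,n}(\theta_{m^*})&=F^{\top}\partial^2_{\theta_m}{\bf{H}}_{m,n}({\bf{F}}(\theta_{m^*}))F.
\end{align*}
Letting $n\to\infty$ in the second identity, we get ${\bf{\Gamma}}_{m^*}(\theta_{m^*,0})=F^{\top}{\bf{\Gamma}}_m(\theta_{m,0})F$. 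This matrix is positive definite because $F$ has orthonormal columns.

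\textbf{Step 1: expansion of the estimators.} By Lemma \ref{thetahat}, $\hat{\theta}_{m,n}$ and $\hat{\theta}_{m^*,n}$ lie in the interior of their parameter sets with probability tending to one. On that event the score vanishes at each estimator. Taylor-expanding the score of Model $m$ around $\theta_{m,0}$, and controlling the remainder by (\ref{gammaprob}) and (\ref{H3prob}) of Lemma \ref{problemma} together with consistency, yields
\begin{align*}
    \sqrt{n}(\hat{\theta}_{m,n}-\theta_{m,0})={\bf{\Gamma}}_m(\theta_{m,0})^{-1}\frac{1}{\sqrt{n}}\partial_{\theta_m}{\bf{H}}_{m,n}(\theta_{m,0})+o_p(1).
\end{align*}
The same argument for Model $m^*$, combined with the two chain-rule identities, gives
\begin{align*}
    \sqrt{n}(\hat{\theta}_{m^*,n}-\theta_{m^*,0})=\bigl(F^{\top}{\bf{\Gamma}}_m(\theta_{m,0})F\bigr)^{-1}F^{\top}\frac{1}{\sqrt{n}}\partial_{\theta_m}{\bf{H}}_{m,n}(\theta_{m,0})+o_p(1).
\end{align*}

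\textbf{Step 2: combine.} Since ${\bf{F}}(\hat{\theta}_{m^*,n})-\theta_{m,0}=F(\hat{\theta}_{m^*,n}-\theta_{m^*,0})$, subtracting the two expansions gives
\begin{align*}
    \sqrt{n}\bigl({\bf{F}}(\hat{\theta}_{m^*,n})-\hat{\theta}_{m,n}\bigr)={\bf{G}}_m(\theta_{m,0})\frac{1}{\sqrt{n}}\partial_{\theta_m}{\bf{H}}_{m,n}(\theta_{m,0})+o_p(1).
\end{align*}
Lemma \ref{Hsqrt}, the continuous mapping theorem and Slutsky's theorem then give the stated limit.

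\textbf{Main obstacle.} The delicate step is justifying the linearization of Model $m^*$ in terms of Model $m$'s score. Two points need care.
\begin{itemize}
    \item The identity ${\bf{F}}(\theta_{m^*,0})=\theta_{m,0}$ must be checked. I would take $\theta_{m^*}=\theta_{m^*,0}$ in the limit version of (\ref{nest}), which gives ${\bf{H}}_m({\bf{F}}(\theta_{m^*,0}))={\bf{H}}_m(\theta_{m,0})$, i.e. ${\bf{Y}}_m({\bf{F}}(\theta_{m^*,0}))=0$. Then {\bf{[C1]}} forces ${\bf{F}}(\theta_{m^*,0})=\theta_{m,0}$.
    \item The Taylor remainder must be controlled uniformly on the segment between ${\bf{F}}(\theta_{m^*,0})$ and ${\bf{F}}(\hat{\theta}_{m^*,n})$. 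This requires that segment to lie in $\Theta_m$, which follows from convexity of $\Theta_m$ and the fact that ${\bf{F}}(\Theta_{m^*})\subset\Theta_m$ is implicit in (\ref{nest}).
\end{itemize}
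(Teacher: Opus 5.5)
Your proposal is correct and follows essentially the same route as the paper. Both arguments Taylor-expand the scores of Models $m^*$ and $m$ around $\theta_{m^*,0}$ and $\theta_{m,0}$, and use the chain-rule identities from (\ref{nest}) to write $F\hat{u}_{m^*,n}-\hat{u}_{m,n}$ as ${\bf{G}}_m(\theta_{m,0})\,n^{-1/2}\partial_{\theta_m}{\bf{H}}_{m,n}(\theta_{m,0})+o_p(1)$, and then conclude with Lemma \ref{Hsqrt} and Slutsky's theorem. Your derivation of ${\bf{F}}(\theta_{m^*,0})=\theta_{m,0}$ from the limit of (\ref{nest}) and {\bf{[C1]}} is a useful addition, since the paper simply asserts this identity.
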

\begin{proof}
Since 
\begin{align*}
    \theta_{m,0}={\bf{F}}(\theta_{m^*,0}),
\end{align*}
we have
\begin{align*}
    {\bf{F}}(\hat{\theta}_{m^*,n})-\theta_{m,0}&={\bf{F}}(\hat{\theta}_{m^*,n})-{\bf{F}}(\theta_{m^*,0})\\
    &=\bigl(F\hat{\theta}_{m^*,n}+c\bigr)-\bigl(F\theta_{m^*,0}+c\bigr)=F\bigl(\hat{\theta}_{m^*,n}-\theta_{m^*,0}\bigr)
\end{align*}
and
\begin{align*}
    {\bf{F}}(\hat{\theta}_{m^*,n})-\hat{\theta}_{m,n}=\bigl({\bf{F}}(\hat{\theta}_{m^*,n})-\theta_{m,0}\bigr)-\bigl(\hat{\theta}_{m,n}-\theta_{m,0}\bigr),
\end{align*}
so that
\begin{align}
    \sqrt{n}\bigl({\bf{F}}(\hat{\theta}_{m^*,n})-\hat{\theta}_{m,n}\bigr)
    =F\hat{u}_{m^*,n}-\hat{u}_{m,n}.\label{thetadecom}
\end{align}
First, we consider the first term on the right-hand side of (\ref{thetadecom}). By using Taylor's theorem, 
\begin{align*}
    \frac{1}{\sqrt{n}}\partial_{\theta_{m^*}}
    {\bf{H}}_{m^*,n}(\hat{\theta}_{m^*,n})&=\frac{1}{\sqrt{n}}\partial_{\theta_{m^*}}
    {\bf{H}}_{m^*,n}(\theta_{m^*,0})\\
    &\quad-
    {\bf{\Gamma}}_{m^*,n}(\theta_{m^*,0})\hat{u}_{m^*,n}+{\bf{\bar{R}}}_{1,n}
\end{align*}
where 
\begin{align*}
    \check{\theta}_{m^*,\lambda,n}=\theta_{m^*,0}+\lambda(\hat{\theta}_{m^*,n}-\theta_{m^*,0})
\end{align*}
for $\lambda\in[0,1]$, and
\begin{align*}
    {\bf{\bar{R}}}_{1,n}^{(i)}=\sum_{j=1}^{q_{m^*}}\sum_{k=1}^{q_{m^*}}\biggl(\frac{1}{n^{3/2}}\int_0^1(1-\lambda)\partial_{\theta^{(i)}_{m^*}}\partial_{\theta^{(j)}_{m^*}}
    \partial_{\theta^{(k)}_{m^*}}{\bf{H}}_{m^*,n}(\check{\theta}_{m^*,\lambda,n})d\lambda\biggr)
    \hat{u}^{(j)}_{m^*,n}\hat{u}^{(k)}_{m^*,n}
\end{align*}
for $i=1,\ldots,q_{m^*}$. Since
\begin{align*}
    \frac{\partial {\bf{F}}(\theta_{m^*})}{\partial\theta_{m^*}^{\top}}=F,
\end{align*}
by the chain rule, it holds from (\ref{nest}) that 
\begin{align*}
    \partial_{\theta_{m^*}}{\bf{H}}_{m^*,n}(\theta_{m^*})
    &=F^{\top}\partial_{\theta_{m}}{\bf{H}}_{m,n}(\theta_{m})
\end{align*}
and
\begin{align*}
    \partial^2_{\theta_{m^*}}{\bf{H}}_{m^*,n}(\theta_{m^*})=F^{\top}\Bigl(\partial^2_{\theta_{m}}{\bf{H}}_{m,n}(\theta_{m})\Bigr)F.
\end{align*}
Hence, on $B_{1,n}$, it is shown that
\begin{align*}
    \frac{1}{\sqrt{n}}\partial_{\theta_{m^*}}
    {\bf{H}}_{m^*,n}(\theta_{m^*,0})=\biggl(-\frac{1}{n}\partial^2_{\theta_{m^*}}
    {\bf{H}}_{m^*,n}(\theta_{m^*,0})\biggr)\hat{u}_{m^*,n}-{\bf{\bar{R}}}_{1,n},
\end{align*}
which deduces 
\begin{align}
    F^{\top}\biggl(\frac{1}{\sqrt{n}}\partial_{\theta_{m}}
    {\bf{H}}_{m,n}(\theta_{m,0})\biggr)=\Bigl(F^{\top}{\bf{\Gamma}}_{m,n}(\theta_{m,0})F\Bigr)\hat{u}_{m^*,n}-{\bf{\bar{R}}}_{1,n},\label{FR}
\end{align}
where
\begin{align*}
    B_{1,n}=\Bigl\{\hat{\theta}_{m^*,n}\in\Theta_{m^*}\Bigr\}.
\end{align*}
Moreover, we set 
\begin{align*}
    \tilde{\bf{\Gamma}}_{m,n}
    &=\begin{cases}
    \displaystyle
     {\bf{\Gamma}}_{m,n}(\theta_{m,0}) & \bigl(on\, B_{2,n}\bigr),\\
    \mathbb{I}_{q_m} & \bigl(on\, B_{2,n}^c\bigr),
\end{cases}
\end{align*}
where
\begin{align*}
    B_{2,n}=\Bigl\{{\bf{\Gamma}}_{m,n}(\theta_{m,0})>0\Bigr\}.
\end{align*}
Note that $F^{\top}\tilde{\bf{\Gamma}}_{m,n}F$ is positive definite since $F$ is full column rank. On $B_{2,n}$, it holds from (\ref{FR}) that
\begin{align}
\begin{split}
    \hat{u}_{m^*,n}
    &=\bigl(F^{\top}\tilde{\bf{\Gamma}}_{m,n}F\bigr)^{-1}F^{\top}\biggl(\frac{1}{\sqrt{n}}\partial_{\theta_{m}}
    {\bf{H}}_{m,n}(\theta_{m,0})\biggr)
    +\bigl(F^{\top}\tilde{\bf{\Gamma}}_{m,n}F\bigr)^{-1}{\bf{\bar{R}}}_{1,n}.
\end{split}\label{thetamstar}
\end{align}
Next, we consider the second term on the right-hand side of (\ref{thetadecom}). On
\begin{align*}
    B_{3,n}=\Bigl\{\hat{\theta}_{m,n}\in\Theta_m\Bigr\},
\end{align*}
Taylor's theorem implies 
\begin{align*}
    \frac{1}{\sqrt{n}}\partial_{\theta_{m}}
    {\bf{H}}_{m,n}(\theta_{m,0})=
    {\bf{\Gamma}}_{m,n}(\theta_{m,0})\hat{u}_{m,n}-{\bf{\bar{R}}}_{2,n},
\end{align*}
where 
\begin{align*}
    \check{\theta}_{m,\lambda,n}=\theta_{m,0}+\lambda(\hat{\theta}_{m,n}-\theta_{m,0})
\end{align*}
for $\lambda\in[0,1]$, and
\begin{align*}
   {\bf{\bar{R}}}_{2,n}^{(i)}=\sum_{j=1}^{q_{m}}\sum_{k=1}^{q_{m}}\biggl(\frac{1}{n^{3/2}}\int_0^1(1-\lambda)\partial_{\theta^{(i)}_{m}}\partial_{\theta^{(j)}_{m}}
    \partial_{\theta^{(k)}_{m}}{\bf{H}}_{m,n}(\check{\theta}_{m,\lambda,n})d\lambda\biggr)
    \hat{u}^{(j)}_{m,n}\hat{u}^{(k)}_{m,n}
\end{align*}
for $i=1,\ldots,q_{m}$. Consequently, we obtain
\begin{align}
    \hat{u}_{m,n}=\tilde{\bf{\Gamma}}^{-1}_{m,n}\biggl(\frac{1}{\sqrt{n}}\partial_{\theta_{m}}
    {\bf{H}}_{m,n}(\theta_{m,0})\biggr)+\tilde{\bf{\Gamma}}^{-1}_{m,n}{\bf{\bar{R}}}_{2,n}\label{thetam}
\end{align}
on $B_{2,n}$. Therefore, 
it follows from (\ref{thetadecom}), (\ref{thetamstar}) and (\ref{thetam}) that
\begin{align*}
    \sqrt{n}\bigl({\bf{F}}(\hat{\theta}_{m^*,n})-\hat{\theta}_{m,n}\bigr)
    &=\tilde{F}_{m,n}
\end{align*}
on $B_{1,n}\cap B_{2,n}\cap B_{3,n}$, where
\begin{align*}
    \tilde{F}_{m,n}&=\Bigl(F\bigl(F^{\top}\tilde{\bf{\Gamma}}_{m,n}F\bigr)^{-1}F^{\top}-
    \tilde{\bf{\Gamma}}^{-1}_{m,n}\Bigr)
    \biggl(\frac{1}{\sqrt{n}}\partial_{\theta_{m}}
    {\bf{H}}_{m,n}(\theta_{m,0})\biggr)\\
    &\qquad\qquad\qquad\qquad\qquad
    +F\bigl(F^{\top}\tilde{\bf{\Gamma}}_{m,n}F\bigr)^{-1}{\bf{\bar{R}}}_{1,n}-\tilde{\bf{\Gamma}}^{-1}_{m,n}{\bf{\bar{R}}}_{2,n}.
\end{align*}
Since ${\bf{\Gamma}}(\theta_{m,0})$ is positive definite, Lemmas \ref{thetahat} and \ref{problemma} imply
\begin{align}
    {\bf{P}}\bigl(B_{1,n}\bigr)\longrightarrow 1,\quad 
    {\bf{P}}\bigl(B_{2,n}\bigr)\longrightarrow 1,\quad 
    {\bf{P}}\bigl(B_{3,n}\bigr)\longrightarrow 1 \label{Bn}
\end{align}
as $n\longrightarrow\infty$. As it holds from Lemmas \ref{thetahat} and \ref{problemma} that
\begin{align*}
     \tilde{\bf{\Gamma}}_{m,n}\overset{p}{\longrightarrow}{\bf{\Gamma}}_m(\theta_{m,0}),\quad 
     {\bf{\bar{R}}}_{1,n}\overset{p}{\longrightarrow}0,\quad {\bf{\bar{R}}}_{2,n}\overset{p}{\longrightarrow}0,
\end{align*}
Lemma \ref{Hsqrt} and Slutsky’s theorem yield
\begin{align}
    \tilde{F}_{m,n}&\overset{d}{\longrightarrow}
    {\bf{G}}_m(\theta_{m,0}){\bf{\Gamma}}_m(\theta_{m,0})^{1/2}Z_{q_m}.\label{Ftilde}
\end{align}
Therefore, for all closed set $C\in\mathbb{R}^{q_m}$, it follows from (\ref{Bn}) and (\ref{Ftilde}) that
\begin{align*}
    &\quad\ \limsup_{n\longrightarrow\infty}{\bf{P}}\Bigl(\sqrt{n}\bigl({\bf{F}}(\hat{\theta}_{m^*,n})-\hat{\theta}_{m,n}\bigr)\in C\Bigr)\\
    &= \limsup_{n\longrightarrow\infty}{\bf{P}}\Bigl(\Bigl\{\sqrt{n}\bigl({\bf{F}}(\hat{\theta}_{m^*,n})-\hat{\theta}_{m,n}\bigr)
    \in C\Bigr\}\cap (B_{1,n}\cap B_{2,n}\cap B_{3,n})\Bigr)\\
    &\qquad+\limsup_{n\longrightarrow\infty}{\bf{P}}\Bigl(\Bigl\{\sqrt{n}\bigl({\bf{F}}(\hat{\theta}_{m^*,n})-\hat{\theta}_{m,n}\bigr)
    \in C\Bigr\}\cap (B_{1,n}\cap B_{2,n}\cap B_{3,n})^c\Bigr)\\
    &\leq \limsup_{n\longrightarrow\infty}{\bf{P}}\Bigl(\bigl\{\tilde{F}_{m,n}\in C\bigr\}\cap (B_{1,n}\cap B_{2,n}\cap B_{3,n})\Bigr)+\limsup_{n\longrightarrow\infty}{\bf{P}}\bigl(B_{1,n}^c \cup B_{2,n}^c\cup B_{3,n}^c\bigr)\\
    &\leq \limsup_{n\longrightarrow\infty} {\bf{P}}\bigl(\tilde{F}_{m,n}\in C\bigr)+\limsup_{n\longrightarrow\infty}{\bf{P}}\bigl(B_{1,n}^c\bigr)+\limsup_{n\longrightarrow\infty}{\bf{P}}\bigl(B_{2,n}^c\bigr)+\limsup_{n\longrightarrow\infty}{\bf{P}}\bigl(B_{3,n}^c\bigr)\\
    &\leq {\bf{P}}\Bigl({\bf{G}}_m(\theta_{m,0}){\bf{\Gamma}}_m(\theta_{m,0})^{1/2}Z_{q_m}
    \in C\bigr),
\end{align*}
which deduces
\begin{align*}
    \sqrt{n}\bigl({\bf{F}}(\hat{\theta}_{m^*,n})-\hat{\theta}_{m,n}\bigr)\overset{d}{\longrightarrow}{\bf{G}}_m(\theta_{m,0}){\bf{\Gamma}}_m(\theta_{m,0})^{1/2}Z_{q_m}.
\end{align*}
This completes the proof.
\end{proof}
\begin{proof}[\textbf{Proof of Theorem \ref{QAIC}}]
By Taylor’s theorem, on $B_{3,n}$, we have
\begin{align*}
    {\bf{H}}_{m,n}\bigl({\bf{F}}(\hat{\theta}_{m^*,n})\bigr)&={\bf{H}}_{m,n}(\hat{\theta}_{m,n})
    +\bar{\bf{R}}_{3,m,m^*,n},
\end{align*}
so that
\begin{align*}
    {\bf{QAIC}}_n(m)-{\bf{QAIC}}_n(m^*)
    &=-2{\bf{H}}_{m,n}(\hat{\theta}_{m,n})+2{\bf{H}}_{m^*,n}\bigl(\hat{\theta}_{m^*,n}\bigr)
    +2(q_m-q_{m^*})\\
    &=-2{\bf{H}}_{m,n}(\hat{\theta}_{m,n})+2{\bf{H}}_{m,n}\bigl({\bf{F}}(\hat{\theta}_{m^*,n})\bigr)+2(q_m-q_{m^*})\\
    &=2\bar{\bf{R}}_{3,m,m^*,n}+2(q_m-q_{m^*}),
\end{align*}
where 
\begin{align*}
    \tilde{\theta}_{m,m^*,\lambda,n}=\hat{\theta}_{m,n}
    +\lambda\bigl({\bf{F}}(\hat{\theta}_{m^*,n})-\hat{\theta}_{m,n}\bigr)
\end{align*}
for $\lambda\in[0,1]$, and
\begin{align*}
    \bar{\bf{R}}_{3,m,m^*,n}=-\sqrt{n}\bigl({\bf{F}}(\hat{\theta}_{m^*,n})-\hat{\theta}_{m,n}\bigr)^{\top}
    \biggl\{\int_0^1(1-\lambda){\bf{\Gamma}}_{m,n}(\tilde{\theta}_{m,m^*,\lambda,n})d\lambda\biggr\}\sqrt{n}\bigl({\bf{F}}(\hat{\theta}_{m^*,n})-\hat{\theta}_{m,n}\bigr).
\end{align*}
As it holds from Lemmas \ref{thetahat} and \ref{problemma} that 
\begin{align*}
    \int_0^1(1-\lambda){\bf{\Gamma}}_{m,n}(\tilde{\theta}_{m,m^*,\lambda,n})d\lambda\overset{p}{\longrightarrow}\frac{1}{2} {\bf{\Gamma}}_m(\theta_{m,0})
\end{align*}
in a similar manner to the proof of Theorem 2 in Kusano and Uchida \cite{Kusano(jump)}, it follows from Proposition \ref{fthetahatprop} and the continuous mapping theorem that
\begin{align*}
    &2\sqrt{n}\bigl({\bf{F}}(\hat{\theta}_{m^*,n})-\hat{\theta}_{m,n}\bigr)^{\top}
    \biggl\{\int_0^1(1-\lambda){\bf{\Gamma}}_{m,n}(\tilde{\theta}_{m,m^*,\lambda,n})d\lambda\biggr\}\sqrt{n}\bigl({\bf{F}}(\hat{\theta}_{m^*,n})-\hat{\theta}_{m,n}\bigr)\\
    &\qquad\qquad\qquad\qquad\overset{d}{\longrightarrow}Z_{q_m}^{\top} {\bf{P}}_m(\theta_{m,0})Z_{q_m},
\end{align*}
where
\begin{align*}
    {\bf{P}}_m(\theta_{m,0})={\bf{\Gamma}}_m(\theta_{m,0})^{1/2}{\bf{G}}_m(\theta_{m,0}){\bf{\Gamma}}_m(\theta_{m,0})
    {\bf{G}}_m(\theta_{m,0}){\bf{\Gamma}}_m(\theta_{m,0})^{1/2}.
\end{align*}
Since ${\bf{P}}_m(\theta_{m,0})$ is an orthogonal projection matrix and
\begin{align*}
    \rank{{\bf{P}}_m(\theta_{m,0})}=q_m-q_{m^*}, 
\end{align*}
we have
\begin{align*}
    Z_{q_m}^{\top} {\bf{P}}_m(\theta_{m,0})Z_{q_m}\sim\chi^2_{q_m-q_{m^*}}.
\end{align*}
Consequently, since
\begin{align*}
    -2\bar{\bf{R}}_{3,m,m^*,n}\overset{d}{\longrightarrow}\chi^2_{q_m-q_{m^*}},
\end{align*}
(\ref{Bn}) implies
\begin{align*}
    &\quad \lim_{n\longrightarrow\infty} {\bf{P}}\Bigl({\bf{QAIC}}_n(m)<{\bf{QAIC}}_n(m^*)\Bigr)\\
    &=\lim_{n\longrightarrow\infty} {\bf{P}}\Bigl(\Bigl\{{\bf{QAIC}}_n(m)<{\bf{QAIC}}_n(m^*)\Bigr\}\cap B_{3,n}\Bigr)\\
    &\qquad\qquad+\lim_{n\longrightarrow\infty} {\bf{P}}\Bigl(\Bigl\{{\bf{QAIC}}_n(m)<{\bf{QAIC}}_n(m^*)\Bigr\}\cap B_{3,n}^c\Bigr) \\
    &=\lim_{n\longrightarrow\infty}{\bf{P}}\Bigl(\Bigl\{-2\bar{\bf{R}}_{3,m,m^*,n}>2(q_m-q_{m^*})\Bigr\}\cap B_{3,n}\Bigr)+0\\
    &=\lim_{n\longrightarrow\infty}{\bf{P}}\Bigl(-2\bar{\bf{R}}_{3,m,m^*,n}>2(q_m-q_{m^*})\Bigr)\\
    &={\bf{P}}\Bigl(\chi^2_{q_m-q_{m^*}}>2(q_{m}-q_{m^*})\Bigr)>0,
\end{align*}
which completes the proof.
\end{proof}
\begin{proof}[\textbf{Proof of Theorem \ref{BICcons1}}]
By using Lemmas \ref{thetahat} and \ref{problemma}, in a similar way to Theorem 2 in Kusano and Uchida \cite{Kusano(BIC)}, this result can be shown.
\end{proof}

\end{document}